\definecolor{commentgreen}{RGB}{2,112,10}
\definecolor{eminence}{RGB}{108,48,130}
\definecolor{frenchplum}{RGB}{149,20,83}
\definecolor{ffqqqq}{RGB}{215,25,28}
\definecolor{cccccc}{RGB}{171,217,233}
\lstdefinelanguage{Sage}
{
keywords={load, matrix, from, import},
emph={},
}
\tikzset{
    partial ellipse/.style args={#1:#2:#3}{
        insert path={+ (#1:#3) arc (#1:#2:#3)}
    }
}
\newtheorem*{rep@theorem}{\rep@title}
\newcommand{\newreptheorem}[2]{
\newenvironment{rep#1}[1]{
 \def\rep@title{#2 \ref{##1}}
 \begin{rep@theorem}}
 {\end{rep@theorem}}}
\theoremstyle{plain}
\newtheorem{thm}{Theorem}[section]
\newtheorem{lemma}[thm]{Lemma}
\newtheorem{proposition}[thm]{Proposition}
\newtheorem*{thm*}{Theorem}
\newtheorem*{corollary*}{Corollary}
\newtheorem*{lemma*}{Lemma}
\newtheorem*{ld*}{Lemma/Definition}
\newtheorem*{proposition*}{Proposition}
\newtheorem*{assumption*}{Assumption}
\theoremstyle{definition}
\newtheorem{definition}[thm]{Definition}
\newtheorem{example}[thm]{Example}
\newtheorem*{definition*}{Definition}
\newtheorem*{example*}{Example}
\newtheorem*{xca*}{Exercise}
\newtheorem*{claim*}{Claim}
\newtheorem*{fact*}{Fact}
\newtheorem*{notation*}{Notation}
\newtheorem*{construction*}{Construction}
\newtheorem*{ack*}{Acknowledgements}
\newtheorem*{question*}{Question}
\newtheorem*{problem*}{Problem}
\newtheorem*{conjecture*}{Conjecture}
\theoremstyle{remark}
\newtheorem{remark}[thm]{Remark}
\DeclareMathOperator{\nd}{nd}
\DeclareMathOperator{\cnd}{cnd}
\DeclareMathOperator{\Num}{Num}
\newcommand{\sR}{\mathcal{R}}
\newcommand{\sO}{\mathcal{O}}
\newcommand{\pr}[1]{\mathbb P^{#1}}
\newcommand{\Ku}{\mathcal{K}u}
\newcommand{\sE}{\mathcal{E}}
\DeclareMathOperator{\Bl}{Bl}
\newcommand{\Z}{\mathbb{Z}}
\DeclareMathOperator{\Coh}{Coh}
\newcommand{\sL}{\mathcal{L}}
\DeclareMathOperator{\SL}{SL}
\newcommand{\f}{\mathrm{F}}
\newcommand{\hf}{\mathrm{HF}}
\begin{document}
\title[A computational view on the non-degeneracy invariant for Enriques surfaces]{A computational view on the non-degeneracy invariant\\for Enriques surfaces}

\author[R. Moschetti]{Riccardo Moschetti}
\address{RM: Department of Mathematics G. Peano, University of Turin, via Carlo Alberto 10, 10123 Torino, Italy} 
\email{riccardo.moschetti@unito.it}

\author[F. Rota]{Franco Rota}
\address{FR: School of Mathematics and Statistics, University of Glasgow, Glasgow G12 8QQ, United Kingdom} 
\email{franco.rota@glasgow.ac.uk}

\author[L. Schaffler]{Luca Schaffler}
\address{LS: Dipartimento di Matematica e Fisica, Universit\`a degli Studi Roma Tre, Largo San Leonardo Murialdo 1, 00146, Roma, Italy}
\email{luca.schaffler@uniroma3.it}

\subjclass[2020]{14J28, 14Q10, 14-04}
\keywords{Enriques surface, elliptic fibration, rational curve, non-degeneracy invariant}

\begin{abstract}
For an Enriques surface $S$, the non-degeneracy invariant $\mathrm{nd}(S)$ retains information on the elliptic fibrations of $S$ and its polarizations. In the current paper, we introduce a combinatorial version of the non-degeneracy invariant which depends on $S$ together with a configuration of smooth rational curves, and gives a lower bound for $\mathrm{nd}(S)$. We provide a SageMath code that computes this combinatorial invariant and we apply it in several examples. First we identify a new family of nodal Enriques surfaces satisfying $\mathrm{nd}(S)=10$ which are not general and with infinite automorphism group. We obtain lower bounds on $\mathrm{nd}(S)$ for the Enriques surfaces with eight disjoint smooth rational curves studied by Mendes Lopes--Pardini. Finally, we recover Dolgachev and Kond\=o's computation of the non-degeneracy invariant of the Enriques surfaces with finite automorphism group and provide additional information on the geometry of their elliptic fibrations.
\end{abstract}

\maketitle

\section{Introduction}
 
%---------------------------------------------------------------------------------

For an Enriques surface $S$, the \emph{non-degeneracy invariant} $\nd(S)$ was introduced in \cite{CD89}. It can be defined as follows.
Enriques surfaces always have an elliptic pencil, and each elliptic pencil has exactly two non-reduced fibers of multiplicity $2$. These fibers, taken with their reduced structure, are called \textit{half-fibers}. Then, $\nd(S)$ is defined to be the maximum number of half-fibers $F_1,\ldots,F_m$ such that 
\begin{equation}
\label{eq:FiFj=1-dij}
F_i\cdot F_j=1-\delta_{ij}
\end{equation}
(note that $F_i^2=0$ automatically for all $i$). We work over an algebraically closed field of characteristic different from $2$ (see Remark~\ref{rmk:programCharacteristic2} for characteristic $2$). It is known that $\nd(S)\leq10$ because $\Num(S)$, the group of divisors on $S$ modulo numerical equivalence, has rank $10$. The inequality $3\leq\nd(S)$ is a theorem of Cossec \cite[Theorem~3]{Cos85}, which was recently re-proven in \cite{MMV22a} and improved to $4\leq\nd(S)$ in \cite{MMV22b}.

If $S$ is an \emph{unnodal} Enriques surface, i.e. $S$ does not contain a smooth rational curve, it is always possible to find such a sequence of length $10$ \cite[Theorem~3.2]{Cos85}. The non-degeneracy invariant for a \emph{general} nodal Enriques surface $S$, which means that the numerical classes of smooth rational curves on $S$ are congruent modulo $2\Num(S)$, is also known to be $10$ (this is a consequence of \cite[\S\,4.2]{DM19} combined with \cite[Lemma~3.2.1]{Cos83}).

For non-general nodal Enriques surfaces the problem of understanding $\nd(S)$ is more subtle. Examples of such Enriques surfaces are the ones with finite automorphism group, which were classified by Kond\=o into seven irreducible families \cite{Kon86}. The non-degeneracy invariants of these surfaces are computed in \cite[\S\,8.9]{DK22} as follows:
\begin{center}
\begin{tabular}{|c|c|c|c|c|c|c|c|}
\hline
Type & I & II & III & IV & V & VI & VII \\
\hline
$\nd$ & 4 & 7 & 8 & 10 & 7 & 10 & 10 \\
\hline
\end{tabular}
\end{center}
Another class of non-general nodal Enriques surfaces, but with infinite automorphism group, is the $4$-dimensional family of Hessian Enriques surfaces. These satisfy $\nd(S)=10$ (see \cite[\S,4.1--\S\,4.3]{Dol18}). At the moment, no Enriques surface with infinite automorphism group is known to satisfy $\nd(S)<10$, and examples of Enriques surfaces with $\nd(S)=5,6,9$ are not known.

\subsection{Main results}

In this work, we outline an approach to studying the non-degeneracy invariant for an Enriques surface $S$. Suppose we have a configuration $\mathcal{R}=\{R_1,\ldots,R_\ell\}$ of smooth rational curves on $S$. Let $C$ be a curve on $S$ which appears in the Kodaira classification of singular fibers of elliptic fibrations, and whose irreducible components are elements of $\mathcal{R}$. By general theory, either $C$ or $\frac 12 C$ is linearly equivalent to a half-fiber. Denote by $\mathsf{HF}(S,\mathcal{R})$ the set of numerical equivalence classes of half-fibers which arise from $\mathcal{R}$ in this way. We can then define the \emph{combinatorial non-degeneracy invariant} $\cnd(S,\mathcal{R})$ as the maximum $m$ such that there exist $f_1,\ldots,f_m\in\mathsf{HF}(S,\mathcal{R})$ satisfying \eqref{eq:FiFj=1-dij}. Since it only considers half-fibers supported on $\mathcal{R}$, $\cnd(S,\mathcal{R})$ gives a lower bound for $\nd(S)$, and has the advantage that its computation can be implemented with a computer.

In this direction, our main contribution is the creation of a piece of code, available at \cite{MRS22} and written in SageMath  \cite{Sag22}, which computes $\cnd(S,\mathcal{R})$ given a configuration of smooth rational curves $\mathcal{R}$ on an Enriques surface $S$. The input of the algorithm is the intersection matrix of the curves in $\mathcal{R}$ together with a basis for $\Num(S)$. The latter is used to determine if a given elliptic configuration from $\mathcal{R}$ is a fiber or a half-fiber in $S$. Afterwards, the code recursively checks all the possible sequences of half-fibers and obtains $\cnd(S,\mathcal{R})$. A by-product of the computation is also a list of all the sequences of elements in $\mathsf{HF}(S,\mathcal{R})$ satisfying \eqref{eq:FiFj=1-dij} and which cannot be further extended (we call such sequences \emph{saturated}, see \S\,\ref{ssec:SaturatedIsoSequences}).

Then, we apply our computer code to several examples of interest (for simplicity, over $\mathbb{C}$): 
\begin{enumerate}
\item  In \S\,\ref{sec:D16} we consider the $4$-dimensional family of $D_{1,6}$-polarized Enriques surfaces: these arise as the minimal resolution of an appropriate $\mathbb{Z}_2^2$-cover of $\mathbb{P}^2$ branched along six general lines. We show that $\nd(S)=10$: this constitutes a new example because these Enriques surfaces are not general nodal, have infinite automorphism group, and they are not of Hessian type (see Remark~\ref{D16-new-example});
\item There are two families of Enriques surfaces with eight disjoint smooth rational curves \cite{MLP02}. Every such surface $S$ comes with a distinguished set $\sR$ of $12$ smooth rational curves, whose dual graphs are pictured in Figures~\ref{fig_dual_graph_LP1} and \ref{fig_dual_graph_LP2}. In \S\,\ref{sec:LopesPardini} we compute $\cnd(S,\mathcal R)=8$ (resp. $\cnd(S,\mathcal{R})=5$) for the members of the first (resp. second) family.
\item In \S\,\ref{sec:ndKondoExamples} we revisit the Enriques surfaces with finite automorphism group. If $S$ is one of these and $\mathcal{R}$ is the (finite) set of smooth rational curves on $S$, then $\mathsf{HF}(S,\mathcal{R})$ contains all the classes of half-fibers (this follows from the work in \cite{Kon86}), so $\cnd(S,\mathcal{R})=\nd(S)$. In addition to recovering the computation of $\nd(S)$ in \cite{DK22}, we
\begin{itemize}
\item provide explicit sequences of half-fibers realizing $\nd(S)$;
\item list all the saturated sequences;
\item provide alternative views on the dual graphs of smooth rational curves in the Enriques surfaces of type III, IV, V, VI (see Figures~\ref{fig_dual_graph_K3}, \ref{fig_dual_graph_K4}, \ref{fig_dual_graph_K5}, \ref{fig_dual_graph_K6}, respectively), which make the symmetries of the graphs more evident.
\end{itemize}
\end{enumerate}

In the two examples from \cite{MLP02} discussed in \S\,\ref{sec:LopesPardini}, $\cnd(S,\mathcal{R})$ produces a lower bound for $\nd(S)$. In each example, we can use the geometry of the K3 surface covering $S$ to find explicit smooth rational curves on $S$ not in $\mathcal{R}$, and use these to define a new set $\mathcal{R}'\supsetneq\mathcal{R}$. It turns out that our code computes $\cnd(S,\mathcal{R}')=\cnd(S,\mathcal{R})$, and several attempts in this direction make us ask whether $\cnd(S,\mathcal{R})=\nd(S)$. Although we do not elaborate on this aspect in the current paper, we believe that it is worthwhile to understand these examples as a first step towards determining criteria for equality of the invariants, which is an interesting and challenging question. Additionally, it would be interesting to apply \texttt{CndFinder} to other examples of Enriques surfaces with a distinguished configuration of smooth rational curves, such as the one in \cite[Remark~3.9]{FV21}.

%-------------------------------------------------------------------------------

\subsection{Applications}
\label{geom-and-comb-asp-of-nd}

A sequence $\{f_i\}_{i=1}^{10}$ of classes in $\Num(S)$ satisfying \eqref{eq:FiFj=1-dij} encodes rich geometric information about $S$. First of all, the quantity $\frac 13(f_1+\ldots+f_{10})\in \Num(S)$ is the class of a nef divisor $\Delta$ called \emph{Fano polarization}, which defines a map from $S$ to a normal surface of degree $10$ in $\pr 5$ called \emph{Fano model}. We have that $\nd(S)=10$ if and only if $S$ admits a very ample Fano polarization (see the discussion in \cite[\S\,2.3]{DM19}).

The non-degeneracy invariant also plays an important role in the study of the bounded derived category $D^b(\Coh(S))$ of coherent sheaves on $S$, which is known to determine $S$ up to isomorphism \cite{BM01,HLT21}. It turns out that $(f_1,\ldots,f_{10})$ defines a subcategory of $D^b(\Coh(S))$, called \textit{Kuznetsov component}. This subcategory determines $S$ up to isomorphism, as proven in \cite[Theorem~A]{LNSZ21} for $\nd(S)=10$. This was extended to any value of $\nd(S)$ in \cite{LSZ22}.
Remarkably, the Kuznetsov component is not intrinsic to the surface: different choices of isotropic sequences may produce non-equivalent Kuznetsov components (see \cite[Corollary~2.8]{LSZ22}). 

Further details on these constructions are given in \S\,\ref{ssec:SaturatedIsoSequences}, and explicit examples of non-isomorphic Fano models and non-equivalent Kuznetsov components are given in \S\,\ref{Geom-Fano-models-and-Kuznetsov-components}. 

%---------------------------------------------------------------------------------

\subsection*{Acknowledgements}
We would like to thank Simon Brandhorst, Igor Dolgachev, Dino Festi, Shigeyuki Kond\=o, Gebhard Martin, Margarida Mendes Lopes, Giacomo Mezzedimi, Rita Pardini, Ichiro Shimada, Paolo Stellari, Davide Cesare Veniani, and Xiaolei Zhao for helpful conversations. We also thank the anonymous referee for the valuable comments and suggestions. The first author is a member of GNSAGA of INdAM. During the preparation of the paper, the first author was partially supported by PRIN 2017 Moduli and Lie theory, and by MIUR: Dipartimenti di Eccellenza Program (2018-2022) -
Dept. of Math. Univ. of Pavia. 
The second author is supported by EPSRC grant EP/R034826/1.
While at KTH, the third author was supported by a KTH grant by the Verg foundation.

%---------------------------------------------------------------------------------

\section{Preliminaries}

%---------------------------------------

\subsection{Enriques surfaces and lattices}
\label{Prelim-En-and-latt}

Over an algebraically closed field of characteristic different from $2$, an \emph{Enriques surface} $S$ is a connected smooth projective surface satisfying $2K_S\sim0$ and $h^1(S,\mathcal{O}_S)=h^2(S,\omega_S)=0$. Therefore, $\mathrm{Pic}(S)$ equals the N\'eron--Severi group $\mathrm{NS}(S)$, and after quotienting by the $2$-torsion element $K_S$ we obtain $\Num(S)$, the group of divisors on $S$ modulo numerical equivalence. We have that $\Num(S)$ equipped with the intersection product of curves is a lattice, i.e. a free finitely generated abelian group $L$ equipped with a non-degenerate symmetric bilinear form $b_L\colon L\times L\rightarrow\mathbb{Z}$. As a lattice, $\Num(S)$ is isometric to $U\oplus E_8$, where $U$ denotes the hyperbolic lattice $\left(\mathbb{Z}^2,\left(\begin{smallmatrix}0&1\\1&0\end{smallmatrix}\right)\right)$ and $E_8$ is the negative definite root lattice associated with the corresponding Dynkin diagram.

Given an explicit example of Enriques surface $S$, it will be important for us to find a basis for $\Num(S)$. The idea for this is described in Remark~\ref{remark-how-to-compute-num} below, but before stating it we need some preliminaries. Given a lattice $L$, denote by $L^*$ its dual $\mathrm{Hom}_\mathbb{Z}(L,\mathbb{Z})$. This is naturally identified with
\[
\{v\in L\otimes\mathbb{Q}\mid b_L(v,w)\in\mathbb{Z}~\textrm{for all}~w\in L\}.
\]
As the bilinear form $b_L$ is assumed to be non-degenerate, the assignment $v\mapsto b_L(v,\cdot\;)$ defines an embedding $L\hookrightarrow L^*$, and the quotient $A_L=L^*/L$ is called the \emph{discriminant group} of $L$. If the lattice $L$ is even, which means $b_L(v,v)\in2\mathbb{Z}$ for all $v\in\mathbb{Z}$, then $A_L$ comes equipped with a quadratic form
\begin{align*}
q_L\colon A_L&\rightarrow\mathbb{Q}/2\mathbb{Z},\\
v+L&\mapsto b_L(v,v)~\mathrm{mod}~2\mathbb{Z}
\end{align*}
called the \emph{discriminant quadratic form}. A lattice $M$ containing $L$ as a finite index subgroup is called an \emph{overlattice} of $L$. $M$ gives rise to the isotropic subgroup $M/L$ of $A_L$. More precisely, by \cite[Proposition~1.4.1~(a)]{Nik80} there is a $1$-to-$1$ correspondence between even overlattices of $L$ and subgroups of $A_L$ which are isotropic with respect to $q_L$.

\begin{remark}
\label{remark-how-to-compute-num}
A possible strategy to determine a basis of $\Num(S)$ for an Enriques surface $S$ is the following. Say we have curves $B_1,\ldots,B_{10}$ on $S$ generating a sublattice $L$ of $\Num(S)$ of rank $10$. Then we have that $L=\Num(S)$ if and only if $L$ is unimodular. Otherwise, the elements $x\in\Num(S)\setminus L$ give rise to nonzero classes $x+L\in A_L$ which are isotropic with respect to $q_L$. So one can first list all the isotropic classes $x+L$, and then use the geometry of $S$ to decide which of these satisfy $x\in\Num(S)$.
\end{remark}

%---------------------------------------

\subsection{Elliptic fibrations on Enriques surfaces}
\label{genus-1-pencils-on-En-surf}

We recall the following standard definitions and facts from \cite[Chapter~VIII, \S\,17]{BHPV04} and \cite[\S\,2.2]{CDL22}.

\begin{definition}
Let $f\colon S\rightarrow\mathbb{P}^1$ be an elliptic fibration on an Enriques surface $S$. Then $f$ has exactly two multiple fibers $2F$ and $2F'$. The curves $F$ and $F'$ are called the \emph{half-fibers} of the elliptic fibration $f$.
\end{definition}

We will often use the following standard results concerning half-fibers on Enriques surfaces. By a curve on a surface we mean a connected effective $1$-cycle.

\begin{definition}
Let $S$ be an Enriques surface. An \emph{elliptic configuration on $S$} is a curve $C$ which is primitive in $\Num(S)$ and appears in Kodaira's classification of fibres of elliptic fibrations (see Table \ref{tab_Kodaira}).
\end{definition}

\begin{table}[h!]
\centering\renewcommand\cellalign{cc}
\setcellgapes{3pt}\makegapedcells
\caption{List of fibers of elliptic fibrations, indexed by their intersection graph in the notation of \cite[Chapter~V, Table~3]{BHPV04}. The irreducible components are smooth rational curves, except for the types $\mathrm{I}_0,\mathrm{I}_1,\mathrm{II}$, where the single component is a curve of arithmetic genus 1. Fibers of type $\mathrm{IV}$ only occur for $n=3$.
}
\label{tab_Kodaira}
\begin{tabular}{|c|c|c|c|}
\hline
\makecell{Dynkin \\ notation}
&
\makecell{Kodaira's \\ notation} & \makecell{Irreducible \\ components} &
Dual graph with multiplicities
\\
\hline
$0$ & 
$\mathrm{I}_0,\mathrm{I}_1,\mathrm{II}$ & 1 &
\begin{tikzcd}[every arrow/.append style={dash}, column sep=small, row sep=small, cells={nodes={draw=black, circle,anchor=center}, inner sep=1.5}]
 1
\end{tikzcd}
\\
\hline
$\widetilde{A}_1$ & 
$\mathrm{I}_2,\mathrm{III}$ & 2 &
\begin{tikzcd}[every arrow/.append style={dash}, column sep=small, row sep=small, cells={nodes={draw=black, circle,anchor=center}, inner sep=1.5}]
 1 \arrow[r, shift left=.15em]\arrow[r, shift right=.15em]  & 1 
\end{tikzcd}
\\
\hline
$\widetilde{A}_{n-1}$, ($n\geq 3$) & 
$\mathrm{I}_n, \mathrm{IV}$ & $n$ & 
\begin{tikzcd}[every arrow/.append style={dash}, column sep=small, row sep=small, cells={nodes={draw=black, circle,anchor=center}, inner sep=1.5}]
 &&   1 \ar{dll} \ar{drr}  && \\
  
 1\rar  & 1\rar  & |[draw=none]|\ldots \rar & 1
\rar & 1  
\end{tikzcd}
\\
\hline
$\widetilde{D}_{4+n}$, ($n\geq 0$) & 
$\mathrm{I}^*_n$ & $4+n+1$ &
\begin{tikzcd}[every arrow/.append style={dash}, column sep=small, row sep=small, cells={nodes={draw=black, circle,anchor=center}, inner sep=1.5}]
1 \ar{dr} & & & &  1 \ar{dl}  \\
  & 2 \ar{dl} \rar & |[draw=none]|\ldots \rar & 2 \ar{dr} & \\
1 & &&&1
\end{tikzcd}
\\
\hline
$\widetilde{E}_{6}$ & 
$\mathrm{IV}^*$ & 7 &
\begin{tikzcd}[every arrow/.append style={dash}, column sep=small, row sep=small, cells={nodes={draw=black, circle,anchor=center}, inner sep=1.5}]
 & & 1 \dar & &   \\
 & & 2 \dar & & \\
1 \rar &2\rar &3\rar &2\rar & 1
\end{tikzcd}
\\
\hline
$\widetilde{E}_{7}$ & 
$\mathrm{III}^*$ & 8 &
\begin{tikzcd}[every arrow/.append style={dash}, column sep=small, row sep=small, cells={nodes={draw=black, circle,anchor=center}, inner sep=1.5}]
 & & & 2 \dar & & & \\
1 \rar &2\rar &3\rar &4 \rar &3\rar &2\rar & 1
\end{tikzcd}
\\
\hline
$\widetilde{E}_{8}$ & 
$\mathrm{II}^*$ & 9 &
\begin{tikzcd}[every arrow/.append style={dash}, column sep=small, row sep=small, cells={nodes={draw=black, circle,anchor=center}, inner sep=1.5}]
 & & 3 \dar & & &&& \\
2 \rar &4\rar &6\rar &5 \rar &4\rar &3\rar &2\rar & 1
\end{tikzcd}
\\
\hline
\end{tabular}
\end{table}

\begin{remark}
\label{rmk:rankOfAnDn}
If the dual graph of $C_{\text{red}}$ is  $\widetilde{A}_n$ or $\widetilde{D}_n$, then we must have that $n\leq8$ as $\Num(S)$ has signature $(1,9)$.
\end{remark}

\begin{lemma}
\label{ellipticconfigurationgivesgenusonepencil}
Let $C$ be an elliptic configuration on an Enriques surface. Then either $|C|$ is an elliptic pencil or $|2C|$ is an elliptic pencil of which $C$ is one of the two half-fibers.
\end{lemma}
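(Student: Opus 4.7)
The plan is to use Riemann--Roch together with the short exact sequence $0\to\mathcal{O}_S\to\mathcal{O}_S(C)\to\mathcal{O}_C(C)\to 0$, with the dichotomy in the statement governed by whether $K_S$ restricts to the trivial class on $C$.

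First I would assemble a few preliminaries. Since $K_S$ is numerically trivial and $C^2=0$, Riemann--Roch gives $\chi(\mathcal{O}_S(C))=1$. Next, the linear system $|K_S-C|$ is empty: otherwise an effective divisor numerically equivalent to $-C$ would contradict $C\cdot H>0$ for any ample $H$, using that $C$ is nef and nonzero in $\Num(S)$. Nefness holds because for the reducible Kodaira types $C$ is the imaginary root of the corresponding affine Dynkin diagram (so $C\cdot R_i=0$ for every component $R_i$), and for the irreducible types $\mathrm{I}_0,\mathrm{I}_1,\mathrm{II}$ it is immediate from $C^2=0$. Finally, each Kodaira configuration $C$, viewed as an effective Cartier divisor on the smooth surface $S$, has trivial dualizing sheaf $\omega_C\cong\mathcal{O}_C$; this is a classical fact which can be recovered via a componentwise adjunction calculation.

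The main computation then proceeds as follows. From $h^1(\mathcal{O}_S)=0$ and the restriction sequence I get $h^0(\mathcal{O}_S(C))=1+h^0(\mathcal{O}_C(C))$, while adjunction gives $\mathcal{O}_C(C)\cong\omega_C\otimes\mathcal{O}_C(-K_S|_C)\cong\mathcal{O}_C(-K_S|_C)$, a $2$-torsion line bundle on $C$. If $K_S|_C\sim 0$, then $\mathcal{O}_C(C)\cong\mathcal{O}_C$ and $h^0(\mathcal{O}_S(C))=2$: together with $C^2=0$ and $p_a(C)=1$ this identifies $|C|$ as an elliptic pencil. If $K_S|_C\not\sim 0$, then $h^0(\mathcal{O}_C(C))=0$ and $h^0(\mathcal{O}_S(C))=1$, and I would repeat the analysis on $0\to\mathcal{O}_S(C)\to\mathcal{O}_S(2C)\to\mathcal{O}_C(2C)\to 0$. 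Here $\mathcal{O}_C(2C)\cong\mathcal{O}_C$ because $K_S|_C$ is $2$-torsion, and $h^1(\mathcal{O}_S(C))=0$ by Serre duality combined with a parallel Riemann--Roch computation on $K_S-C$. Therefore $h^0(\mathcal{O}_S(2C))=2$, so $|2C|$ is an elliptic pencil; since $h^0(\mathcal{O}_S(C))=1$ the divisor $C$ cannot be a moving fiber and must be one of the two half-fibers.

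The main obstacle I anticipate is the intrinsic verification of $\omega_C\cong\mathcal{O}_C$ for every Kodaira type---especially the types $\widetilde{D}_n$ and $\widetilde{E}_n$, which involve components of multiplicity greater than one---since we cannot invoke the existence of the very elliptic fibration we are constructing. The remaining cohomology chase and Serre-duality bookkeeping is routine.
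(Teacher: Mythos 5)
The paper does not actually prove this lemma: it is quoted as a standard fact with pointers to \cite[Chapter~VIII, \S\,17]{BHPV04} and \cite[\S\,2.2]{CDL22}, and your argument is essentially the proof found in those references. The skeleton is right and the cohomological bookkeeping is correct: $\chi(\mathcal{O}_S(C))=1$ by Riemann--Roch, $h^0(K_S-C)=0$ by nefness of $C$, adjunction identifies $\mathcal{O}_C(C)$ with the $2$-torsion bundle $\mathcal{O}_C(-K_S|_C)$ once $\omega_C\cong\mathcal{O}_C$ is known, and the vanishing $h^1(\mathcal{O}_S(C))=0$ needed in the second case does follow from $h^0=1$, $h^2=0$, $\chi=1$.

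The one genuine gap is exactly the one you flag, and your proposed repair does not work: $\omega_C\cong\mathcal{O}_C$ is \emph{not} recoverable from a componentwise adjunction calculation. Adjunction on each component only shows that $\omega_C=\mathcal{O}_C(K_S+C)$ has degree $0$ on every $R_i$ (using $K_S\cdot R_i=C\cdot R_i=0$), but a multidegree-zero line bundle on a reducible configuration need not be trivial --- for the cyclic types $\widetilde{A}_n$ the multidegree-zero line bundles form a whole copy of $\mathbb{G}_m$, so degree considerations prove nothing. The standard fix is Mumford's lemma on indecomposable divisors of canonical type: since the multiplicities of $C$ have $\gcd$ equal to $1$ and the intersection form on its components is negative semi-definite with radical $\mathbb{Q}C$, every decomposition $C=D_1+D_2$ into positive parts has $D_1\cdot D_2=-D_1^2>0$, so $C$ is $1$-connected and $h^0(\mathcal{O}_C)=1$; then $\chi(\mathcal{O}_C)=0$ and duality give $h^0(\omega_C)=1$, and the lemma that a line bundle of componentwise degree $\leq 0$ on a $1$-connected divisor with a nonzero section must be trivial yields $\omega_C\cong\mathcal{O}_C$. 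Note that this same lemma is also what you silently use at the step ``if $K_S|_C\not\sim 0$ then $h^0(\mathcal{O}_C(C))=0$'': there you need that a \emph{nontrivial} multidegree-zero bundle on $C$ has no sections, which is again the $1$-connectedness statement and not a degree count. With these two appeals made explicit (see \cite[Chapter~I]{CD89} or \cite[\S\,2.2]{CDL22}), your proof is complete and coincides with the standard one.
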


\begin{lemma}
\label{tooltodistinguishfiberfromhalf-fiber}
Let $S$ be an Enriques surface and let $f\colon S\rightarrow\mathbb{P}^1$ be an elliptic fibration. Let $F_1,F_2$ be the half-fibers and $F$ a reduced fiber of $f$. Let $\pi\colon X\rightarrow S$ be the universal K3 cover of $S$. Then $\pi^{-1}(F_1),\pi^{-1}(F_2)$ are connected and $\pi^{-1}(F)$ is disconnected.
\end{lemma}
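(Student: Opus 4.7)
The plan is to study the Stein factorization
\[
X \xrightarrow{g'} B \xrightarrow{h} \mathbb{P}^1
\]
of the composition $f\circ\pi\colon X\to\mathbb{P}^1$. Since $X$ is a K3 surface, $B$ is automatically a smooth projective curve and $g'$ is an elliptic fibration with $B\cong\mathbb{P}^1$; crucially, $g'$ admits no multiple fibers, a standard consequence of $\omega_X\cong\mathcal{O}_X$ via the canonical bundle formula for elliptic surfaces.

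Using this I would argue that $\deg h=2$. If instead $\deg h=1$ then $g'=f\circ\pi$, but the half-fibers $F_i$ would pull back to the multiple fibers $2\pi^{-1}(F_i)$ of $g'$, contradicting the no-multiple-fibers property. Hence $h\colon\mathbb{P}^1\to\mathbb{P}^1$ has degree $2$, and one needs only to locate its two ramification points.

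To do this, set $t_i=f(F_i)$ and analyze multiplicities. For a reduced fiber $F=f^{-1}(t)$ with $t\neq t_1,t_2$, the étale pullback $\pi^{-1}(F)=(f\circ\pi)^{-1}(t)$ is reduced, so $h$ cannot ramify at $t$; thus $h^{-1}(t)$ consists of two distinct points over which $g'$ has two distinct connected fibers, whose union is $\pi^{-1}(F)$, showing it is disconnected. Conversely $(f\circ\pi)^{-1}(t_i)=2\pi^{-1}(F_i)$ has multiplicity $2$ as a fiber of $g'$, and since $g'$ has no multiple fibers the map $h$ must ramify over $t_i$; hence $h^{-1}(t_i)$ is a single point, and the corresponding connected fiber of $g'$ has the same support as $\pi^{-1}(F_i)$, which is therefore connected.

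The only nontrivial ingredient beyond formal Stein-factorization manipulations is the absence of multiple fibers on an elliptic fibration of a K3 surface; once this is granted, the remainder is a routine ramification count. The main subtlety to articulate carefully is that the multiplicity of a fiber of $f\circ\pi$ over $t$ factors as the ramification index of $h$ at $h^{-1}(t)$ times the (trivial, by the K3 input) multiplicity of the corresponding fiber of $g'$.
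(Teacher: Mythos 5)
Your proof is correct, but note that the paper itself offers no argument for this lemma: it is stated among the ``standard definitions and facts'' recalled from \cite[Chapter~VIII, \S\,17]{BHPV04} and \cite[\S\,2.2]{CDL22}, so there is no in-paper proof to match. Your route via the Stein factorization $X\xrightarrow{g'}B\xrightarrow{h}\mathbb{P}^1$ of $f\circ\pi$ is sound: $B\cong\mathbb{P}^1$ since $q(X)=0$, the relative minimality of $g'$ (no $(-1)$-curves on a K3) lets you invoke the canonical bundle formula to exclude multiple fibers, the case $\deg h=1$ is correctly ruled out because $\pi^*(2F_i)=2\pi^{-1}(F_i)$ would then be a multiple fiber, and the ramification count correctly places the two branch points of the degree-$2$ map $h$ exactly over $f(F_1),f(F_2)$. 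The one step worth spelling out is the claim that $h$ \emph{must} ramify over $t_i$: if it did not, then $2\pi^{-1}(F_i)=g'^*(b_1)+g'^*(b_2)$ with disjoint supports, and splitting $\pi^{-1}(F_i)$ along those supports forces each $g'^*(b_j)$ to be divisible by $2$, i.e.\ a multiple fiber of $g'$ --- you gesture at this in your closing remark but should make it explicit. For comparison, the usual textbook argument is shorter and purely local on $S$: the cover $\pi$ is the \'etale double cover attached to the $2$-torsion class $\omega_S$, and $\pi^{-1}(D)$ is connected for a connected curve $D$ if and only if $\omega_S|_D$ is nontrivial; adjunction gives $\omega_S|_F\cong\omega_F\cong\mathcal{O}_F$ for a reduced fiber and $\omega_S|_{F_i}\cong\mathcal{O}_{F_i}(-F_i)$, which is nontrivial $2$-torsion precisely because $2F_i$ is a multiple fiber. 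Your approach trades that line-bundle computation for the global geometry of the induced elliptic fibration on the K3 cover; both are legitimate, and yours has the mild advantage of exhibiting the two half-fibers as the fibers of $g'$ over the two branch points of $h$.
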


\begin{lemma}[{\cite[Chapter~V, Theorem~5.7.5~(i)]{CD89}}]
\label{possiblehalf-fibersincharacteristiczero}
Let $F$ be a half-fiber on an Enriques surface. Then $F$ is of type $\widetilde{A}_n$ for $1\leq n\leq8$ or a smooth genus one curve. In particular, if an elliptic configuration $C$ has dual graph $\widetilde{D}_n$ or $\widetilde{E}_n$, then $C$ is a fiber.
\end{lemma}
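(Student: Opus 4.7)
The plan is to combine the classification in Table \ref{tab_Kodaira} with an étale-cover argument using the universal K3 cover $\pi\colon X\to S$. By definition, a half-fiber $F$ is the reduced structure of a full fiber $2F$ of an elliptic fibration, so its dual graph is one of those listed in Table \ref{tab_Kodaira}. Our task is therefore to exclude the tree-type dual graphs $\widetilde{D}_{4+n}$, $\widetilde{E}_6$, $\widetilde{E}_7$, $\widetilde{E}_8$.

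The key tool is Lemma \ref{tooltodistinguishfiberfromhalf-fiber}, which tells us that $\pi^{-1}(F)$ is connected. Since $\pi$ is étale of degree $2$, the restriction $\pi^{-1}(F)\to F$ is a nontrivial connected étale double cover. Now, when the dual graph of $F$ is a tree, $F$ is simply connected as a topological space: each irreducible component is a copy of $\mathbb{P}^1$, hence simply connected, and a tree of simply connected pieces joined at single points is itself simply connected by an iterated Seifert--van Kampen argument. Thus $F$ admits no nontrivial connected étale double cover, yielding the desired contradiction. By contrast, the cycle-type configurations $\widetilde{A}_n$ (for $n\geq 1$) have topological fundamental group $\mathbb{Z}$, and the nontrivial connected étale double cover of a cycle of length $n+1$ is the cycle of length $2(n+1)$; a smooth genus-one curve has $\pi_1\cong\mathbb{Z}^2$, admitting many nontrivial double covers. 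The bound $n\leq 8$ in the $\widetilde{A}_n$ case is then immediate from Remark \ref{rmk:rankOfAnDn}.

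For the ``in particular'' clause, if $C$ is an elliptic configuration with dual graph of type $\widetilde{D}_n$ or $\widetilde{E}_n$, then by Lemma \ref{ellipticconfigurationgivesgenusonepencil} either $|C|$ is itself an elliptic pencil (so $C$ is a fiber) or $|2C|$ is an elliptic pencil with $C$ a half-fiber; the main statement of the lemma rules out the latter, so $C$ must be a fiber.

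The main technical point is making the simple-connectedness step watertight in arbitrary characteristic different from $2$, where a purely topological argument must be replaced by an étale-fundamental-group computation for nodal curves. A cleaner alternative, avoiding topology entirely, is to observe that if $F$ is a half-fiber then every component of the full fiber $2F$ appears with even multiplicity; however, Kodaira's classification forces the fibers of non-$\widetilde{A}$ type to include components of odd multiplicity, giving an immediate contradiction.
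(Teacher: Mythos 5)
The paper does not prove this lemma: it is imported verbatim from \cite[Chapter~V, Theorem~5.7.5~(i)]{CD89}, so there is no internal proof to compare against. Your main argument is a correct, self-contained proof. Restricting the \'etale K3 cover $\pi$ to a half-fiber $F$ gives, by Lemma~\ref{tooltodistinguishfiberfromhalf-fiber}, a connected degree-$2$ \'etale cover of $F$, which is impossible when the dual graph of $F$ is a tree, since a tree of rational curves has trivial (\'etale) fundamental group; this rules out the types $\widetilde{D}_n$ and $\widetilde{E}_n$, the ``in particular'' clause then follows from Lemma~\ref{ellipticconfigurationgivesgenusonepencil} exactly as you say, and $n\leq 8$ comes from Remark~\ref{rmk:rankOfAnDn}. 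This is the standard Enriques-specific shortcut, and it is more economical than Kodaira's general local analysis of multiple fibers underlying the cited theorem. Two cosmetic points: $F$ is the fiber divided by $2$, not its reduced structure (if $F$ were of type $\widetilde{D}_n$ it would itself be non-reduced), though for the connectedness argument only the support matters; and what places $F$ in Table~\ref{tab_Kodaira} in the first place is that it is an indecomposable divisor of canonical type, a fact you use implicitly.

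The ``cleaner alternative'' in your final paragraph, however, does not work. The claim that ``Kodaira's classification forces the fibers of non-$\widetilde{A}$ type to include components of odd multiplicity'' proves too much: a fiber of type $\mathrm{I}_n$ has all components of multiplicity $1$, which is also odd, so the same reasoning would forbid \emph{every} multiple fiber, including the two that occur on every Enriques surface. The resolution is that Kodaira's table lists the primitive divisor $F$ (with $\gcd$ of multiplicities equal to $1$), while a multiple fiber is $mF$; the assertion that only type $\mathrm{I}_n$ (or smooth) fibers can be multiple is precisely the content of the lemma being proved, so invoking ``Kodaira's classification'' at that point is circular. Keep the covering-space argument and drop the alternative.
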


\subsection{Isotropic sequences and the non-degeneracy invariant}

Here we recall some preliminary notions and the definition of the non-degeneracy invariant, as it was given in the introduction. We follow \cite[Chapter~III]{CD89}.

\begin{definition}
An \textit{isotropic sequence} is a sequence of primitive isotropic vectors $(e_1,\ldots,e_n)$ in $\Num(S)$ satisfying $e_i\cdot e_j=1-\delta_{ij}$. Additionally, $(e_1,\ldots,e_n)$ is called \textit{non-degenerate} if every $e_i$ is the class of a nef divisor, and \textit{maximal} if $n=10$.
\end{definition}

\begin{remark}
Note that if $e\in\Num(S)\setminus\{0\}$ is the class of a nef divisor $E$ and $e^2=0$, then $E$ must be effective. To prove this, first observe by Riemann--Roch that $E$ or $K_S-E$ is effective, but not both. If by contradiction $K_S-E$ is effective, then one can show that $K_S-E$ is numerically trivial, which implies $e=0$.
\end{remark}

\begin{remark}
If $E_1,\ldots,E_n$ are half-fibers whose classes $e_i$ satisfy \eqref{eq:FiFj=1-dij}, then $(e_1,\ldots,e_n)$ is a non-degenerate isotropic sequence in $\Num(S)$. It is a standard fact that the converse also holds, however we briefly review its proof for the interested reader. 

Suppose $(e_1,\ldots,e_n)$ is a non-degenerate isotropic sequence, so that each $e_i$ is the class of a nef divisor $E_i$. First note that $E_i$ intersects all of its components $C$ trivially: as $E_i$ is nef, $E_i\cdot C\geq0$ and $E_i\cdot(E_i-C)\geq0$, so $0\leq E_i\cdot C\leq0$. Let $C_{i1},\ldots,C_{i\ell}$ be the connected components of $E_i$, and write $C_{ij}=m_{ij}C_{ij}'$ for some positive integer $m_{ij}$ and a curve $C_{ij}'$ with primitive class. Then the $C_{ij}'$ are \emph{indecomposable} \cite[Chapter~III, \S\,1]{CD89}, and using \cite[Proposition~3.1.1]{CD89} we can see that $C_{ij}'$ is an elliptic configuration. So, Lemma~\ref{ellipticconfigurationgivesgenusonepencil} combined with the fact that $[C_{ij}']$ is primitive imply that $|2C_{ij}'|$ is an elliptic pencil of which $C_{ij}'$ is a half-fiber. As the $C_{ij}'$ are disjoint, they are numerically equivalent, implying that $e_i=(\sum_{j=1}^\ell m_{ij})[C_{i1}]$. As $e_i$ is primitive, the only possibility is that $\ell=1$ and $m_{11}=1$. So $E_i$ is connected, and it is the half-fiber of an elliptic pencil.
\end{remark}

\begin{definition}
\label{defn:ndinvariant}
Let $S$ be an Enriques surface. Define the \emph{non-degeneracy invariant of $S$}, denoted by $\nd(S)$, as the maximum integer $n$ such that there exists a non-degenerate isotropic sequence of length $n$. Equivalently, $\nd(S)$ is the maximum $n$ for which there exist $F_1,\ldots,F_n$ half-fibers on $S$ such that $F_i\cdot F_j=1$ for all $i\neq j$.
\end{definition}

It is possible to give a geometric interpretation to degenerate isotropic sequences as well. Since two distinct smooth rational curves on $S$ cannot be numerically equivalent, we can identify the set $\sR(S)$ of smooth rational curves on $S$ with the subset of $\Num(S)$ given by their classes. Moreover, every $R\in \sR(S)$ satisfies $R^2=-2$ and intersects all the other $R'\in \sR(S)$ non-negatively. Therefore, $\sR(S)$ is a set of roots of $\Num(S)$. The associated Weyl group $W$ acts on $\Num(S)$ by reflections across elements of $\sR(S)$. Every $W$-orbit of an isotropic sequence in $\Num(S)$ admits a (unique) representative, called \textit{canonical}, which is geometrically meaningful:

\begin{lemma}[{\cite[Lemma~3.3.1]{CD89}}, {\cite[Proposition~6.1.5]{DK22}}]
\label{lem:CanonicalSequences}
Suppose that $(f_1,\ldots,f_k)$ is an isotropic sequence in $\Num(S)$. Then there is a unique $w\in W$ such that, up to reordering:
\begin{itemize}
    \item the sequence $(f_1',\ldots,f_k')\coloneqq (w(f_1),\ldots,w(f_k))$ contains a non-degenerate subsequence $(f_{i_1}',\ldots,f_{i_c}')$ with $1=i_1< \ldots <i_c$;
    \item for any $i_s<i<i_{s+1}$ there are rational curves $R^{i_s}_1,\ldots,R^{i_s}_{i-i_s}$ such that 
    \[ f_i'=f_{i_s}' + R^{i_s}_1 + \ldots + R^{i_s}_{i-i_s} \in W\cdot f_{i_s}.\]
    Here, $R^{i_s}_1 + \ldots + R^{i_s}_{i-i_s}$ is a chain of type $A_{i-i_s}$.
\end{itemize}
\end{lemma}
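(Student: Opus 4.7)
The plan is to establish existence by induction on $k$, leveraging the fact that the nef cone of $S$ is a strict fundamental domain for the action of $W$ on the positive cone of $\Num(S)$. For $k=1$, I would use the unique Weyl element sending $f_1$ to its nef representative; the resulting class is automatically a half-fiber by the remark preceding the lemma statement.

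For the inductive step, assume the canonical form is available for sequences of length $k-1$. Given $(f_1, \ldots, f_k)$, I would apply the inductive hypothesis (after a suitable reordering) to produce $w' \in W$ for which $(w'(f_1), \ldots, w'(f_{k-1}))$ is already canonical, with non-degenerate subsequence at positions $1 = i_1 < \ldots < i_{c'}$. Writing $g_i \coloneqq w'(f_i)$, the task reduces to analyzing $g_k$: if $g_k$ is nef, append it to the non-degenerate subsequence. Otherwise, pick a smooth rational curve $R$ with $R \cdot g_k < 0$; since $g_k$ is primitive isotropic and $R^2 = -2$, a standard computation gives $R \cdot g_k = -1$, so $s_R(g_k) = g_k + R$ is still isotropic and still intersects any $g_i$ orthogonal to $R$ in $1$. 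Iterating with a minimality measure (for instance $g_k \cdot H$ for a fixed ample $H$), the process should terminate with $g_k$ expressed as $g_{i_s} + R_1 + \ldots + R_{k - i_s}$ for a chain of smooth rational curves of type $A_{k - i_s}$ orthogonal to $g_{i_s}$ — equivalently, contained in a reducible fiber of the elliptic pencil $|2 g_{i_s}|$. A final reordering inserts $g_k$ directly after the canonical block starting at $g_{i_s}$.

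For uniqueness I would argue on two levels. First, the non-degenerate subsequence consists of the nef representatives in each occurring $W$-orbit, and these are unique by the fundamental-domain property. Second, once the $f'_{i_s}$ are fixed, the primitivity of each $f_i$ together with the intersection constraints $f_i \cdot f_{i_s} = 1$ forces both the chain decomposition and the individual curves $R^{i_s}_j$ uniquely.

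The main obstacle will be the coherence of the reflections in the inductive step: a curve $R$ with $R \cdot g_k < 0$ may fail to be orthogonal to some earlier $g_j$, in which case $s_R$ disturbs the already-established canonical form. Overcoming this requires either showing that $R$ can be chosen orthogonal to the previously normalized classes, or showing that when $R$ does interact non-trivially with some $g_j$ then $R$ is precisely the next link of the $A$-chain extending $g_j$, so that the reordering allowed by the statement absorbs the reflection. Controlling this bookkeeping — and ensuring the whole procedure terminates in finitely many steps — is the crux of the argument.
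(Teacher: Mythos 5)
First, a point of reference: the paper does not prove this lemma at all --- it is imported verbatim from \cite[Lemma~3.3.1]{CD89} and \cite[Proposition~6.1.5]{DK22}, so there is no in-paper argument to compare against; your proposal has to be measured against the proofs in those sources. Measured that way, the sketch contains genuine gaps. The most serious one is the one you flag yourself and leave open: the reflections $s_R$ used to normalize $g_k$ need not fix the already-normalized classes $g_1,\ldots,g_{k-1}$, and nothing in your term-by-term induction controls this. This is not a bookkeeping nuisance but the actual content of the lemma; it is precisely where the hypotheses $f_i\cdot f_j=1$ must enter, and the cited proofs sidestep it entirely by a global argument: for $k\geq 2$ the sum $\Delta=f_1+\cdots+f_k$ satisfies $\Delta^2=k(k-1)>0$, so one applies the single $w\in W$ moving $\Delta$ into the nef chamber, and then for each non-nef $w(f_i)$ the inequality $w(\Delta)\cdot R\geq 0$ for a $(-2)$-curve $R$ with $w(f_i)\cdot R<0$ forces some other $w(f_j)$ to meet $R$ positively; the conditions $f_i\cdot f_j=1$ then pin down the $A$-chain structure. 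Your inductive scheme never produces this interaction between $g_k$ and the earlier terms.

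Two further steps are incorrect or insufficient as stated. The claim that $R\cdot g_k<0$ forces $R\cdot g_k=-1$ ``by a standard computation'' is false: if $F$ is a nef primitive isotropic class and $R$ a smooth rational curve with $F\cdot R=2$ (a bisection of $|2F|$), then $g=s_R(F)=F+2R$ is primitive, isotropic, in the positive cone, and $g\cdot R=-2$. Without this the single-step analysis (``$s_R(g_k)=g_k+R$'') and the termination of the descent both need to be reworked. Finally, your uniqueness argument shows at best that the resulting canonical \emph{sequence} is determined (via the fundamental-domain property and the rigidity of the chains), not that $w$ itself is unique: the stabilizer in $W$ of a nef isotropic class is in general nontrivial (it contains the reflections in the components of the reducible fibers of $|2F|$), so already for $k=1$ distinct elements of $W$ satisfy the stated conditions, and the uniqueness assertion requires the more careful normalization made in \cite{DK22}.
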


Any sequence which up to reordering has the form  $(f_1',\ldots,f_k')$ is called a \textit{canonical} isotropic sequence. Its \textit{non-degeneracy} is the number $c$ of nef classes it contains. (Observe that by our definition all non-degenerate sequences are canonical. This is a slight discrepancy with \cite[Chapter~III, \S\,3]{CD89}, but it should not cause confusion.) We conclude this section with the following result about extensions of non-degenerate sequences.

\begin{lemma}[{\cite[Corollary~3.3.1]{CD89}}]
\label{lem:ExtendingSequence}
Let $k\neq 9$. Then every non-degenerate isotropic sequence $(f_1,\ldots,f_k)$ can be extended to a canonical maximal isotropic sequence $(f_1,\ldots,f_k,f_{k+1},\ldots,f_{10})$ of non-degeneracy $c\geq k$.
\end{lemma}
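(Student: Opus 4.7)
The plan is to first extend $(f_1,\ldots,f_k)$ to an isotropic sequence of length ten in $\Num(S)$, dropping the nef condition on the newly added entries, and then feed the result into Lemma~\ref{lem:CanonicalSequences} to extract a canonical maximal representative of non-degeneracy at least $k$.

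For the extension I would argue inductively, reducing to the single step $k\rightsquigarrow k+1$ for $1\leq k\leq 8$. The sublattice $M_k=\langle f_1,\ldots,f_k\rangle\subseteq\Num(S)$ has Gram matrix $J_k-I_k$, of determinant $(-1)^{k-1}(k-1)$ and signature $(1,k-1)$; its orthogonal complement $M_k^\perp$ is therefore negative definite of rank $10-k$ and discriminant $k-1$, using unimodularity of $\Num(S)\cong U\oplus E_8$. A candidate $f_{k+1}$ decomposes over $\mathbb{Q}$ as $v+w$, where $w=\tfrac{1}{k-1}\sum_{i=1}^{k} f_i\in M_k\otimes\mathbb{Q}$ is the unique element pairing to $1$ with each $f_i$, and $v\in M_k^{\perp}\otimes\mathbb{Q}$ must satisfy $v^2=-\tfrac{k}{k-1}$ together with a glue condition modulo $M_k^\perp$ imposed by the overlattice $\Num(S)$. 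Checking the existence of such $v$ is a Nikulin-style computation in the discriminant forms $A_{M_k}$ and $A_{M_k^\perp}$: one verifies that the image of $w$ in $A_{M_k}$ corresponds, under the glue anti-isometry carried by $\Num(S)$, to a class in $A_{M_k^\perp}$ that admits a representative of the prescribed norm. For $k\leq 8$ the complement has rank at least two, which leaves enough flexibility; at $k=9$ the complement has rank one and the constraint can genuinely fail, which is the source of the exclusion. After producing $f_{k+1}$, I would divide by the gcd of its coordinates to enforce primitivity, and iterate.

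Once an isotropic sequence $(f_1,\ldots,f_{10})$ lives inside $\Num(S)$, Lemma~\ref{lem:CanonicalSequences} supplies $w\in W$ and a reordering yielding a canonical form $(f_1',\ldots,f_{10}')$ of non-degeneracy $c$. To deduce $c\geq k$, I would invoke the uniqueness clause: reorder so that $f_1,\ldots,f_k$ occupy the first $k$ positions. Since the truncation $(f_1,\ldots,f_k)$ is itself already in canonical form with $w=\mathrm{id}$ and non-degeneracy $k$, uniqueness propagates this and forces the canonical Weyl element for the full sequence to fix each $f_i$ with $i\leq k$. The first $k$ entries of the canonical form therefore remain nef, so $c\geq k$.

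The main obstacle is the gluing step at each inductive stage: producing a legitimate primitive integral $f_{k+1}\in\Num(S)$ satisfying all intersection and norm conditions. This is where the lattice arithmetic of $U\oplus E_8$ genuinely enters; the rest of the argument is formal once extensions are available. The inadmissibility of $k=9$ fits naturally into this picture, since that is precisely when the orthogonal complement collapses to rank one and the gluing constraint can fail for an arbitrary non-degenerate starting sequence.
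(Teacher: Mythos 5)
The paper offers no proof of this statement: it is quoted verbatim from \cite[Corollary~3.3.1]{CD89}. Your attempt to actually prove it has two gaps, one of which is serious.

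The serious one is the last step. After extending $(f_1,\ldots,f_k)$ by arbitrary isotropic vectors to a length-$10$ sequence and applying Lemma~\ref{lem:CanonicalSequences}, you obtain a canonical sequence $(w(f_1),\ldots,w(f_{10}))$, and you claim that ``uniqueness propagates'' to force $w(f_i)=f_i$ for $i\leq k$. The uniqueness clause of Lemma~\ref{lem:CanonicalSequences} does not give this: it asserts uniqueness of $w$ for a \emph{given} sequence, and there is no compatibility between the $w$ attached to $(f_1,\ldots,f_{10})$ and the $w$ (namely the identity) attached to its truncation $(f_1,\ldots,f_k)$, because the truncation in the original ordering of a canonical sequence need not itself be canonical (the reordering may interleave the $f_i$, $i\le k$, with degenerate members built on them). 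Indeed, if your argument were valid it would show $w=\mathrm{id}$ outright, i.e.\ that \emph{every} maximal isotropic extension of a non-degenerate sequence is already canonical --- which is false, since nothing prevents you from choosing $f_{k+1}$ in the $W$-orbit of a nef class but far from the nef cone. Without this step you have neither $c\geq k$ nor an extension whose first $k$ entries are literally $f_1,\ldots,f_k$, which is what the statement asserts. What is actually needed is either a careful choice of the new vectors $f_{k+1},\ldots,f_{10}$, or a relative canonicalization using only reflections in smooth rational curves orthogonal to $f_1,\ldots,f_k$ (these visibly fix the first $k$ entries); neither is supplied.

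The second gap is in the extension step itself. The existence of a suitable $v\in M_k^\perp\otimes\mathbb{Q}$ gluing to $w=\tfrac{1}{k-1}\sum_i f_i$ is exactly the content of the lattice-theoretic extension theorem, and ``for $k\leq 8$ the complement has rank at least two, which leaves enough flexibility'' is an assertion, not a verification: one must actually check, for each $k$, that the relevant class of $A_{M_k^\perp}$ contains a vector of norm $-k/(k-1)$ (and one must first justify that $M_k$ is primitively embedded so that the glue map is an anti-isometry of full discriminant groups). The base case $k=1$ also breaks down, since $M_1=\mathbb{Z}f_1$ is degenerate and your formula for $w$ divides by $k-1=0$. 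Finally, the remark about dividing $f_{k+1}$ by the gcd of its coordinates is both unnecessary ($f_{k+1}\cdot f_1=1$ already forces primitivity) and, if it were necessary, would destroy the intersection conditions you just arranged. Your identification of $k=9$ as the rank-one collapse of the orthogonal complement is the one genuinely correct structural observation in the lattice part of the argument.
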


\begin{remark}
The extension $(f_1,\ldots,f_k,f_{k+1},\ldots,f_{10})$ in Lemma~\ref{lem:ExtendingSequence} is in general not unique, as illustrated in Example~\ref{ex_Kondo1Extensions}.
\end{remark}

%---------------------------------------

\section{A combinatorial version of the non-degeneracy invariant of Enriques surfaces}

\subsection{The combinatorial non-degeneracy invariant}

We now introduce a purely combinatorial version of the non-degeneracy invariant, which applied to Enriques surfaces yields a lower bound for $\nd(S)$.

\begin{definition}
Let $G=(V,E,w)$ be a finite, undirected, simple graph with vertices $V=\{v_1,\ldots,v_k\}$, edges $E$, and a weight function $w\colon E\rightarrow \mathbb{Z}_{>0}$. Let $L_G=\oplus_{i=1}^n\mathbb{Z}v_i$. An element $x=\sum_i a_iv_i\in L_G$ will be called an \emph{elliptic vector} if it satisfies the following conditions:
\begin{enumerate}

\item the vertices $v_i$ with $a_i\neq0$ induce a subgraph of $G$ which is an extended Dynkin diagram of type $\widetilde{A}_n$, $\widetilde{D}_n$, or $\widetilde{E}_6,\widetilde{E}_7,\widetilde{E}_8$;

\item the nonzero coefficients $a_i$ are as in Kodaira's classification of singular fibers of elliptic fibrations.

\end{enumerate}
We can endow $L_G$ with a symmetric bilinear form $b_G$ obtained by extending the following:
\begin{displaymath}
b_G(v_i,v_j)=\left\{ \begin{array}{ll}
-2~&\textrm{if}~i=j\\
0~&\textrm{if}~i\neq i~\textrm{and}~(v_i,v_j)\notin E\\
w(v_i,v_j)~&\textrm{if}~i\neq j~\textrm{and}~(v_i,v_j)\in E.
\end{array} \right.
\end{displaymath}
If we let $\mathrm{Null}(b_G)=\{x\in L_G\mid b_G(x,y)=0~\textrm{for all}~y\in L_G\}$, then $\overline{L}_G=L_G/\mathrm{Null}(b_G)$ is a free $\mathbb{Z}$-module and $b_G$ induces on it a well defined non-degenerate symmetric bilinear form, making $\overline{L}_G$ into a lattice. Let $N$ be a fixed overlattice of $\overline{L}_G$. For an elliptic vector $v\in L_G$, define $c_N([v])=\frac{1}{2}[v]$ if $\frac{1}{2}[v]\in N$ and $c_N([v])=[v]$ otherwise. Let
\[
\mathsf{HF}(G,N)=\{c_N([v])\mid v\in L_G~\textrm{is an elliptic vector}\}\subseteq N.
\]
Then we define the \emph{combinatorial non-degeneracy invariant} $\cnd(G,N)$ to be the maximum $m$ such that there exist $f_1,\ldots,f_m\in\mathsf{HF}(G,N)$ satisfying $f_i\cdot f_j=1-\delta_{ij}$.
\end{definition}

\begin{proposition}
Let $S$ be an Enriques surface and let $\mathcal{R}=\{R_1,\ldots,R_k\}$ be a finite collection of smooth rational curves on $S$. Let $G$ be the graph dual to the configuration $\mathcal{R}$ with weights given by the intersection numbers $R_i\cdot R_j$ for $i\neq j$. Then $\cnd(G,\Num(S))\leq\nd(S)$.
\end{proposition}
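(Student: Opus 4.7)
The plan is to exhibit a natural isometric map $L_G \to \Num(S)$ and to show that every class in $\mathsf{HF}(G, \Num(S))$ is in fact the numerical class of a half-fiber on $S$. Once this is established, any sequence $f_1, \ldots, f_m \in \mathsf{HF}(G, \Num(S))$ realizing $\cnd(G, \Num(S))$ translates into a non-degenerate isotropic sequence of length $m$ in $\Num(S)$, giving $m \leq \nd(S)$ directly from Definition~\ref{defn:ndinvariant}.

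First I would define $\varphi \colon L_G \to \Num(S)$ by $v_i \mapsto [R_i]$. Since $G$ is the weighted dual graph of $\mathcal{R}$ and each $R_i$ is a smooth rational curve with $R_i^2 = -2$, the bilinear form $b_G$ is the pullback along $\varphi$ of the intersection form on $\Num(S)$. Non-degeneracy of the intersection form then forces $\ker \varphi = \mathrm{Null}(b_G)$, so $\varphi$ descends to an injective isometry $\bar\varphi \colon \bar L_G \hookrightarrow \Num(S)$, identifying $\bar L_G$ with a sublattice of $\Num(S)$ as required in the definition of $\cnd$.

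Next I would take an elliptic vector $v = \sum_i a_i v_i \in L_G$ and form the effective divisor $C = \sum_i a_i R_i$ on $S$. The defining conditions on $v$ make $C$ connected, supported on an extended Dynkin diagram of type $\widetilde{A}_n$, $\widetilde{D}_n$ or $\widetilde{E}_n$, and carrying the Kodaira multiplicities; in particular $v$ lies in the radical of $b_G$ restricted to the induced subgraph, so $C^2 = 0$. Standard Enriques surface theory, in the spirit of Lemma~\ref{ellipticconfigurationgivesgenusonepencil}, then forces $C$ to be either a fiber or a half-fiber of some elliptic fibration on $S$. If $C$ is a half-fiber, then $[C]$ is primitive and $\frac{1}{2}[C] \notin \Num(S)$, so $c_{\Num(S)}([v]) = [C]$; if instead $C$ is a full fiber, then the pencil $|C|$ has a half-fiber $F$ with $[C] = 2[F]$ in $\Num(S)$, so $\frac{1}{2}[v] = [F] \in \Num(S)$ and $c_{\Num(S)}([v]) = [F]$. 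In either case $c_{\Num(S)}([v])$ is a half-fiber class on $S$.

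The main obstacle is this middle step, namely passing from the combinatorial datum of an elliptic vector to a genuine fiber or half-fiber on $S$; it rests on the fact that a connected effective divisor with $C^2 = 0$ and Kodaira-type support on an Enriques surface always belongs to an elliptic pencil, together with the standard characterization of half-fibers as those with primitive numerical class. With this in hand, the bookkeeping is routine: every sequence in $\mathsf{HF}(G, \Num(S))$ with $f_i \cdot f_j = 1 - \delta_{ij}$ produces a non-degenerate isotropic sequence of half-fibers of the same length on $S$, and the inequality $\cnd(G, \Num(S)) \leq \nd(S)$ follows.
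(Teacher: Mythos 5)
Your proposal is correct and follows essentially the same route as the paper's (much terser) proof: identify $\overline{L}_G$ with a sublattice of $\Num(S)$, observe that elliptic vectors give curves of Kodaira type so that $c_{\Num(S)}([v])$ is always the class of a half-fiber via Lemma~\ref{ellipticconfigurationgivesgenusonepencil}, and conclude that any sequence realizing $\cnd(G,\Num(S))$ is a non-degenerate isotropic sequence of half-fiber classes, whence the inequality. The paper states this in two sentences; you have simply spelled out the same argument, including the correct case analysis for the operation $c_N$ distinguishing fibers from half-fibers.
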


\begin{proof}
By construction, we have that the elliptic vectors in $L_G$ are classes of elliptic configurations on $S$ and $\mathsf{HF}(G,\Num(S))$ is a collection of classes of half-fibers on $S$. From this we obtain the claimed inequality, because $\nd(S)$ considers all the half-fibers on $S$, while $\cnd(G,\Num(S))$ only the ones in $\mathsf{HF}(G,\Num(S))$.
\end{proof}

\begin{definition}
Let $S$ be an Enriques surface and $\mathcal{R}$ a finite collection of smooth rational curves $\mathcal{R}=\{R_1,\ldots,R_k\}$ on $S$ with dual graph $G$. We define $\mathcal{E}(S,\sR)$ as the set of elliptic fibrations $|2F|$ on $S$ for $F\in\mathsf{HF}(G,\Num(S))$. Moreover, in this case we denote $\mathsf{HF}(G,\Num(S))$ and $\cnd(G,\Num(S))$ simply by $\mathsf{HF}(S,\mathcal{R})$ and $\cnd(S,\mathcal{R})$.
\end{definition}

\begin{remark}
Notice that if $\mathcal{E}(S,\mathcal{R})$ contains all the elliptic fibrations on $S$, then the combinatorial non-degeneracy invariant $\cnd(S,\mathcal{R})$ equals $\nd(S)$.
\end{remark}

\begin{remark}
\label{rmk:catFiberHalfFiber}
Suppose we have an Enriques surface $S$ and a finite collection $\mathcal{R}$ of smooth rational curves on it. To determine $\cnd(S,\mathcal{R})$ we first determine the set $\mathsf{HF}(S,\mathcal{R})$. So, for an elliptic configuration $C$ with irreducible components in $\mathcal{R}$, it will be important to distinguish whether $C$ is either a fiber or a half-fiber of an elliptic fibration (these are the only possibilities by Lemma~\ref{ellipticconfigurationgivesgenusonepencil}). We have two strategies:
\begin{enumerate}

\item Apply Lemma~\ref{tooltodistinguishfiberfromhalf-fiber} to the universal K3 cover of $S$.

\item Say we have a basis $\{B_1,\ldots,B_{10}\}$ of $\Num(S)$. As the lattice $\Num(S)$ is unimodular, if $(B_i\cdot C)/2$ is an integer for all $i$, then $C$ is a fiber. Otherwise, $C$ is a half-fiber.

\end{enumerate}
\end{remark}

Therefore, given $S$, $\sR$, and either the universal cover of $S$ or a basis for $\Num(S)$, the problem of evaluating $\cnd(S,\mathcal{R})$ can be automatized with a computer. We implement this in \S\,\ref{section-describing-code}.

%----------------------------------------------------------------------------------------------

\subsection{Saturated isotropic sequences}
\label{ssec:SaturatedIsoSequences}

\begin{definition}
\label{defn:saturated}
A non-degenerate isotropic sequence $(f_1,\ldots,f_k)$ is \textit{not saturated} if it can be extended to a non-degenerate isotropic sequence of length $c>k$. It is called \textit{saturated} otherwise.
\end{definition}

We also introduce a relative notion of saturatedness, for which we fix a collection $\mathcal{R}$ of smooth rational curves on $S$. 

\begin{definition}
\label{defn:Rsaturated}
Let $(f_1,\ldots,f_k)$ be a non-degenerate isotropic sequence of classes in $\mathsf{HF}(S,\sR)$. Then, we say that $(f_1,\ldots,f_k)$ is \textit{not} $\sR$-\textit{saturated} if it can be extended to a non-degenerate isotropic sequence of length $c>k$ by adding classes in $\mathsf{HF}(S,\sR)$. It is called $\sR$\textit{-saturated} otherwise.
\end{definition}

These definitions are motivated by the fact that saturated sequences in combination with Lemma~\ref{lem:ExtendingSequence} can be used to produce examples of non-isomorphic Fano models and non-equivalent Kuznetsov components of $S$. Let us first recall these concepts. Suppose that $(f_1,\ldots,f_c)$ is a non-degenerate isotropic sequence which is saturated. If $c\neq9$, then by Lemma~\ref{lem:ExtendingSequence} we can extend it to a maximal canonical isotropic sequence $(f_1,\ldots,f_c,f_{c+1},\ldots,f_{10})$ of non-degeneracy still equal to $c$. This means that, after appropriately reordering $f_1,\ldots,f_{10}$, there exist indices $i_1,\ldots,i_c$ such that $f_{i_1},\ldots,f_{i_c}$ are classes of half-fibers, and $f_i$ for $i_s<i<i_{s+1}$ has the form
\[
f_i =f_{i_s} + R^{i_s}_1 + \ldots + R^{i_s}_{i-i_s},
\]
where the $R^{i_s}_1+\ldots+R^{i_s}_{i_{s+1}-i_s-1}$ form a chain of type $A_{i_{s+1}-i_s-1}$ (see Lemma \ref{lem:CanonicalSequences}). As mentioned in the introduction, the vector $\frac{1}{3}(f_1+\ldots+f_{10})\in \Num(S)$ is the class of a nef divisor $\Delta$ called a Fano polarization. The linear series $|\Delta|$ maps $S$ to a normal surface of degree 10 in $\mathbb{P}^5$, called a Fano model of $S$. This morphism contracts exactly the rational curves of class $R^{i_s}_k$, $k=1,\ldots,i_{s+1}-i_s-1$, giving rise to singularities of type $A_{i_{s+1}-i_s-1}$. $\Delta$ is very ample if and only if all the $f_i$ are classes of half-fibers. In other words, $S$ admits a very ample Fano polarization if and only if $\nd(S)=10$ (we refer the interested reader to the discussion in \cite[\S\,2.3]{DM19}). 

From the point of view of derived categories, one can use $(f_1,\ldots,f_{10})$ as above to construct a subcategory of the bounded derived category $D^b(\Coh(S))$ as follows. Let $F_{i_s}$, $1\leq s \leq c$, denote one of the half-fibers of the fibrations corresponding to $f_{i_s}$. For $i_s<i<i_{s+1}$, define $ F_i =F_{i_s} + R^{i_s}_1 + \ldots + R^{i_s}_{i-i_s} $. We have that $\mathcal{L}=(\sO(F_1),\ldots,\sO(F_{10}))$ is an exceptional collection \cite[Proposition~3.5]{LNSZ21} whose orthogonal complement $\Ku(S,\mathcal{L})$ is called a Kuznetsov component of $D^b(\Coh(S))$.

Now, suppose that $Q_1,Q_2$ are two saturated sequences of length $c_1\neq c_2$, with $c_1\neq 9 \neq c_2$. By Lemma~\ref{lem:ExtendingSequence}, $Q_1$ and $Q_2$ can be extended to canonical maximal isotropic sequences $P_1,P_2$ of non-degeneracy $c_1,c_2$ respectively. For $\ell=1,2$, $P_\ell$ defines a Fano polarization $\Delta_\ell$ and a Fano model $S_\ell$. The singularities of $S_\ell$ are determined by the curves contracted by $\Delta_\ell$, which are precisely the rational curves appearing among the vectors of $P_\ell$, and there are $10-c_\ell$ of such smooth rational curves. Since $c_1\neq c_2$, we have that $S_1$ and $S_2$ have different singularities, so they cannot be isomorphic.

Similarly, $P_\ell$ defines an exceptional collection $\sL_\ell$ and a Kuznetsov component $\Ku(S,\sL_\ell)$. As shown in \cite[Theorem~2.7]{LSZ22}, up to shifts and isomorphism there are exactly $c_\ell$ objects in $\Ku(S,\sL_\ell)$ that are $3$-spherical or $3$-pseudoprojective. Again, since $c_1\neq c_2$, we conclude that $\Ku(S,\sL_1)\not\simeq \Ku(S,\sL_2)$. The same strategy is used in \cite[Corollary~2.8]{LSZ22} to show that general nodal Enriques surfaces always admit non-equivalent Kuznetsov components.

Explicit examples of the scenarios above are discussed in \S\,\ref{Geom-Fano-models-and-Kuznetsov-components}.

%---------------------------------------------------------------------------------

\section{A SageMath code for computing the non-degeneracy invariant}
\label{section-describing-code}

In this section we present the SageMath code \texttt{CndFinder}, available at \cite{MRS22}, which computes the set $\mathsf{HF}(S,\mathcal{R})$ and consequently determines the combinatorial non-degeneracy invariant $\cnd(S,\mathcal{R})$ for an Enriques surface $S$ and a collection of smooth rational curves $\mathcal{R}$ on $S$.

\subsection{Notation}
In what follows and in the code, the objects involved in the computation of the combinatorial non-degeneracy invariant are categorized according to their \textit{type}. Here we make this notion precise and fix some notation. In particular, we define the type of an elliptic configuration, of an elliptic fibration, and of an isotropic sequence.

In the code, we denote extended Dynkin diagrams with just their letter and rank. For instance \texttt{D8} refers to $\widetilde{D}_8$. The type of an elliptic configuration is the associated Dynkin diagram, together with the information of being a fiber or a half-fiber. For example, \texttt{A7HF} refers to an elliptic configuration whose underlying diagram is $\widetilde{A}_7$ and which is a half-fiber. Throughout the paper, we use the more compact notation $\widetilde{A}_7^{\hf}$. 

Within the code, the type of an elliptic fibration is the formal sum of the types of its singular fibers supported in $\mathcal{R}$. For instance, in the code, \texttt{(2~A1HF~+~1~D6F)} refers to the fibrations whose singular fibers are three elliptic configurations, two of type $\widetilde{A}_1^{\hf}$ and one of type $\widetilde{D}_6^{\f}$. Throughout the paper, we use the more compact notation $(2\widetilde{A}_1^{\hf}+\widetilde{D}_6^{\f})$. 

Finally, the type of a non-degenerate isotropic sequence is the list of the types of the elliptic fibrations appearing in it. So sequences of type
\begin{center}
\texttt{4~x~(1~A1F~+~1~A7F)},~\texttt{1~x~(2~A1F~+~2~A3HF)},~\texttt{1~x~(2~A1HF~+~1~D6F)}
\end{center}
contain one half-fiber of each of four fibrations of type $(\widetilde{A}_1^{\f} + \widetilde{A}_7^{\f})$, one half-fiber of a fibration of type $(2\widetilde{A}_1^{\f} + 2\widetilde{A}_3^{\hf})$, and one half-fiber of a fibration of type $(2\widetilde{A}_1^{\hf}+\widetilde{D}_6^{\f})$. Throughout the paper, we use the more compact notation
\[
4\times(\widetilde{A}_1^{\f} + \widetilde{A}_7^{\f}),(2\widetilde{A}_1^{\f} + 2\widetilde{A}_3^{\hf}),(2\widetilde{A}_1^{\hf}+\widetilde{D}_6^{\f}).
\]

\subsection*{Input} The input required is a collection $\sR =\{R_1, \ldots, R_k\}$ of smooth rational curves which span $\Num(S)$ over $\mathbb{Q}$, together with a basis of $\Num(S)$ consisting of $\mathbb{Q}$-linear combinations of curves in $\sR$. The following command starts the calculation, saving all the data in the variable named \texttt{FinalResult}.

\begin{lstlisting}
from nd_sequences_finder import *
IntersectionMatrix=matrix([[...]])
BasisNum=[[...]]
FinalResult=CndFinder(IntersectionMatrix,BasisNum)
\end{lstlisting}
Here, \texttt{IntersectionMatrix} is the $k \times k$ intersection matrix of $\sR$. \texttt{BasisNum} is an array which specifies a basis of $\Num(S)$, written in terms of the generating set $\sR$. 

\subsection*{The main algorithm} 

The code proceeds as follows:

\subsubsection*{(Step 1)}

The code identifies the elliptic configurations supported on $\sR$, grouped according to their type. As we start with a collection of smooth rational curves on $S$, the possible types that can arise are $\widetilde{A}_1, \ldots, \widetilde{A}_8$, $\widetilde{D}_4, \ldots, \widetilde{D}_8$, $\widetilde{E}_6, \widetilde{E}_7, \widetilde{E}_8$ by Remark~\ref{rmk:rankOfAnDn}.  If $N$ denotes the intersection matrix of an extended Dynkin diagram as above, then the code lists all the subsets $X\subseteq \sR$ whose intersection matrix equals $N$. 

Note that $\widetilde{A}_1$ and $\widetilde{A}_2$ admit two distinct geometric realizations each, but their intersection matrices coincide. The code cannot distinguish between them, but this does not affect the end result for $\cnd(S,\mathcal{R})$.

\textbf{The output of step 1.} For each extended Dynkin diagram $N$ as above, this step lists all the subsets $X_i \subseteq \sR$ with intersection matrix $N$. We say that the $X_i$ have type given by the Dynkin diagram associated to $N$.
The code then groups the $X_i$ together according to their type.

\subsubsection*{(Step 2)}
By construction, there is a unique elliptic configuration $C_i$ supported on $X_i$. 
By Lemma~\ref{ellipticconfigurationgivesgenusonepencil}, either $C_i$ or $\frac{1}{2}C_i$ is primitive in $\Num(S)$. 
To decide this, the code applies strategy (2) of Remark~\ref{rmk:catFiberHalfFiber}. First, it assumes $C_i$ is not primitive in $\Num(S)$, and stores in memory the array of coefficients of $1/2*C_i$. Then, the code decides whether $\frac{1}{2}[C_i] \in \Num(S)$ by intersecting $1/2*C_i$ with every element of \texttt{BasisNum}. If all the intersections are integers, then $\frac{1}{2}[C_i]\in \Num(S)$. 
Otherwise, $[C_i]$ is primitive in $\Num(S)$, and the code replaces $1/2*C_i$ with $C_i$. 
This is repeated for each subset $X_i\subseteq \sR$ obtained in the previous step.

\textbf{The output of step 2} is the list $\{C_1,\ldots,C_n\}$, where $[C_i]$ is the unique class of a half-fiber associated with $X_i$. The $C_i$ are grouped together according to their type.

\subsubsection*{(Step 3)}

The curves $\{C_1,\ldots,C_n\}$ from step 2 may satisfy $|2C_i|=|2C_j|$ for $i\neq j$. This happens if and only if $C_i\cdot C_j=0$. Step 3 eliminates the redundancy and lists distinct elliptic fibrations.

\textbf{The output of step 3} is the list of elements of $\sE(S,\sR)$ and $\mathsf{HF}(S,\mathcal{R})=\{[F_1],\ldots,[F_m]\}$, together with the choice of the representative $F_i$ for each class $[F_i]$. This information is saved in the key \texttt{EllipticFibrations} in the output dictionary. Strictly speaking, this step is not necessary to compute $\cnd(S,\sR)$, but it arranges the data in a more geometrically meaningful way and it speeds up the computation significantly. The elliptic fibrations are grouped together depending on their type.

\subsubsection*{(Step 4)}
For each type $T_i$ of elliptic fibration which was found in the previous step, this step computes an integer $m_i=1, \dots, 10$. The number $m_i$ equals the maximum number of elliptic fibrations of type $T_i$ that can appear in the same isotropic sequence. Like step 3, step 4 is not strictly necessary to compute $\cnd(S,\sR)$, but it improves the computing time.

\textbf{The output of step 4} is the same as the output of step 3, with the additional information of the numbers $m_i$ associated with each type of elliptic fibration.

\subsubsection*{(Step 5)}
This is a recursive step. Roughly, the code starts with a (initially empty) list $\mathbb L$ of isotropic sequences, and tries to add to each sequence an element of $\mathsf{HF}(S,\mathcal{R})$. Afterward, the code calls the function again, and it stops when extending sequences in $\mathbb{L}$ is no longer possible. This is described in more detail below.

More precisely, a class $[F_i]\in \mathsf{HF}(S,\mathcal{R})$ can be added to an isotropic sequence $([F_{i_1}], \ldots, [F_{i_t}])$ if and only if 
$([F_{i_1}], \ldots, [F_{i_t}], [F_i])$ satisfies \eqref{eq:FiFj=1-dij}. 
To check this condition efficiently, we introduce an ordering on the set $\mathsf{HF}(S,\sR)$ based on the type of half-fiber classes.

The possible types $\{T_1, \ldots, T_r\}$ define a partition of $\mathsf{HF}(S,\sR)$: for $i=1,\ldots,r$ let $\{[F_{j}^{(i)}]\}_{j=1}^{n_i}$ be the set of elements of $\mathsf{HF}(S,\sR)$ of type $T_i$. Given $F^{(i)}_j,F^{(i')}_{j'}\in \mathsf{HF}(S,\sR)$, we declare that $F^{(i)}_j>F^{(i')}_{j'}$ provided $i>i'$, or $i=i'$ and $j>j'$.

The isotropic sequences in $\mathbb L$ are in increasing order. Suppose that an (ordered) isotropic sequence ends with the class $[F_{\bar j}^{(\bar i)}]$. Then the code tries to add to it all the elements $F_{j}^{(\bar i)}$ in $T_{\bar i}$, with $j>\bar j$, and all the elements $F_j^{(i)}$ in $T_i$ with $i> \bar i$. If a class is successfully added to the sequence, the extended sequence is added to $\mathbb{L}$, and the function is called again. Otherwise, the recursion stops.

\textbf{The output of step 5} is the list $\mathbb{L}$ of all isotropic sequences of elements in $\mathsf{HF}(S,\mathcal{R})$. In particular, the longest sequences in $\mathbb{L}$ have length equal to $\cnd(S,\mathcal{R})$.

\subsubsection*{(Step 6)}  If an isotropic sequence $Q\in \mathbb{L}$ is not $\sR$-saturated, there is another $Q'\in \mathbb{L}$ containing all the elements of $Q$. In this case, $Q$ is discarded.

\textbf{The output of step 6} is the list of $\sR$-saturated sequences. In the output dictionary, it is saved in the key \texttt{SaturatedSequences}.

\begin{remark}[Characteristic $2$]
\label{rmk:programCharacteristic2}
The code produces the correct $\cnd(S,\mathcal{R})$ also for Enriques surfaces in characteristic $2$. First of all, if $C$ is an elliptic configuration, $|C|$ or $|2C|$ is an elliptic pencil by \cite[Theorem~2.2.8]{CDL22}. Moreover, if an elliptic or quasi-elliptic fibration on an Enriques surface has a multiple fiber, that multiplicity is $2$. The reason why we kept Enriques surfaces in characteristic $2$ separate from our discussion is because for these the non-degeneracy invariant $\nd(S)$ behaves very differently. For instance, there exist Enriques surfaces in characteristic $2$ which satisfy $1\leq\nd(S)\leq3$, and all three possibilities occur (see \cite[Chapter~6]{DK22} and \cite{MMV22b}).
\end{remark}

\begin{remark}
Let $D$ be a big divisor on an Enriques surface $S$. The function
\[
\Phi(D)=\min\{D\cdot F\mid F~\textrm{is a half-fiber on}~S\}
\]
(see \cite[Equation~(2.4.7)]{CDL22}) encodes information about the linear system $|D|$. For instance, if $D$ is also nef, then $\Phi(D)=1$ if and only if $|D|$ has at least one base point \cite[Theorem~2.4.14]{CDL22}. We refer the reader to \cite[\S\,2.4--\S\,2.6]{CDL22} for a general discussion. One can define a version of this invariant which is relative to a configuration $\mathcal{R}$ of finitely many smooth rational curves on $S$. More precisely, we call the \emph{combinatorial $\Phi$-invariant of $D$ with respect to $\mathcal{R}$} the minimum of $D\cdot F$ as $[F]\in\mathsf{HF}(S,\mathcal{R})$. The calculation of the combinatorial $\Phi$-invariant is then a variation of \texttt{CndFinder} (we thank the referee for suggesting this), which is also available at \cite{MRS22}. The function is called by the following command:
\begin{lstlisting}
CPhiFinder(IntersectionMatrix,BasisNum,DivisorsList)
\end{lstlisting}
The input is the same as \texttt{CndFinder}, with the addition of a list of (big) divisors $D_1,\ldots,D_\ell$ of which we want to compute the combinatorial $\Phi$-invariant. Each $D_i$ is specified as a linear combination with rational coefficients of the smooth rational curves in $\mathcal{R}$.
\end{remark}

%--------------------------------------------------------------------------------

\section{Enriques surfaces which are \texorpdfstring{$\mathbb{Z}_2^2$}{Lg}-cover of \texorpdfstring{$\mathbb{P}^2$}{Lg}}
\label{sec:D16}

We now begin our series of examples of Enriques surfaces where we apply the code described in \S\,\ref{section-describing-code}. For simplicity, we work over $\mathbb{C}$.

\begin{definition}
Consider the blow up of $\mathbb{P}^2$ at three not-aligned points $\Bl_3\mathbb{P}^2$, which comes with three distinct rulings $\pi_i\colon\Bl_3\mathbb{P}^2\rightarrow\mathbb{P}^1$, $i=1,2,3$. For each ruling $\pi_i$, choose two distinct fibers $\ell_i,\ell_i'$ which are smooth lines, so that the overall arrangement $\{\ell_1,\ell_1',\ell_2,\ell_2',\ell_3,\ell_3'\}$ of six lines on $\Bl_3\mathbb{P}^2$ does not have triple intersection points. Write $\mathbb{Z}_2^2=\{e,a,b,c\}$, where $e$ is the identity element. Let $S\rightarrow\Bl_3\mathbb{P}^2$ be the $\mathbb{Z}_2^2$-cover with the following building data \cite[Definition~2.1]{Par91}:
\[
D_a=\ell_1+\ell_1',~D_b=\ell_2+\ell_2',~D_c=\ell_3+\ell_3'.
\]
One can verify using tools in \cite{Par91} that $S$ is an Enriques surface (see \cite[Definition~2.1]{Sch22} for details). Adopting the same name introduced in \cite{Oud11}, we call $S$ a \emph{$D_{1,6}$-polarized Enriques surface}. $D_{1,6}$ denotes the sublattice of $\langle-1\rangle\oplus\langle1\rangle^{\oplus6}$ of vectors with even square, and the above Enriques surface $S$ admits a primitive embedding of $D_{1,6}$ into $\mathrm{Pic}(S)$ satisfying specific geometric properties (see \cite[\S\,3.1]{Oud11} for details). We will not need such a lattice-theoretic characterization, and the covering construction given will suffice for our purposes.
\end{definition}

\begin{remark}
Compactifications of the moduli space of $D_{1,6}$-polarized Enriques surfaces were studied in \cite{Oud11,Sch22}. The universal K3 covers of the $D_{1,6}$-polarized Enriques surfaces were studied in \cite{Sch18} from the point of view of their automorphisms.
\end{remark}

\begin{lemma}
\label{computation12nodesond16enriquessurface}
Let $S$ be a $D_{1,6}$-polarized Enriques surface and let $S\rightarrow\Bl_3\mathbb{P}^2$ be the corresponding $\mathbb{Z}_2^2$-cover. Then the preimage of the six $(-1)$-curves in $\Bl_3\mathbb{P}$ gives a configuration of $(-2)$-curves whose dual graph is in Figure~\ref{D16-En-dualgraph12(-2)-curves}.
\end{lemma}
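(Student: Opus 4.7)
The plan is to analyze the pullback of each of the six $(-1)$-curves on $\Bl_3\mathbb{P}^2$ to $S$ via the $\mathbb{Z}_2^2$-cover $\pi$, and then compute the pairwise intersections of the resulting curves. I would first recall that the six $(-1)$-curves on $\Bl_3\mathbb{P}^2$ consist of the three exceptional divisors $E_1,E_2,E_3$ and the three strict transforms $L_{12},L_{13},L_{23}$ of the lines through pairs of blown-up points. Their dual graph on $\Bl_3\mathbb{P}^2$ is a hexagon $E_1-L_{12}-E_2-L_{23}-E_3-L_{13}-E_1$, with non-adjacent pairs disjoint.

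Second, using that $\ell_i,\ell_i'$ are fibers of the ruling $\pi_i$ passing through $p_i$ and no other blown-up point, I would compute directly that each of the six $(-1)$-curves meets exactly one of $D_a,D_b,D_c$ transversally at two points: $E_1$ and $L_{23}$ meet only $D_a$, $E_2$ and $L_{13}$ meet only $D_b$, and $E_3$ and $L_{12}$ meet only $D_c$. By the theory of abelian covers \cite{Par91}, for each such $(-1)$-curve $C$ the monodromy of the restricted cover $\pi^{-1}(C)\to C$ is generated by a single involution $g\in\mathbb{Z}_2^2$, so $\pi^{-1}(C)$ splits into two connected components, each a double cover of $C\cong\mathbb{P}^1$ branched at two points, hence itself a $\mathbb{P}^1$. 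Each component is a $(-2)$-curve by adjunction (since $K_S\equiv 0$ on the Enriques surface $S$), and from $(\pi^{-1}(C))^2=4C^2=-4$ the two components are disjoint. This produces the $12$ claimed $(-2)$-curves on $S$.

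Third, I would compute pairwise intersections. If $C,C'$ are non-adjacent in the hexagon, then $C\cdot C'=0$, so their preimages are disjoint in $S$. If $C,C'$ are adjacent, the case analysis above shows $C$ meets $D_x$ and $C'$ meets $D_y$ with $x\neq y$, and their unique intersection point $P$ lies off the branch locus (since $P\notin D_x$ as $C'\cdot D_x=0$, symmetrically $P\notin D_y$, and $P$ is in neither of $C, C'$'s intersection with the third component). Labeling the components $C^+,C^-$ by the cosets of $\langle x\rangle$ in $\mathbb{Z}_2^2$ and similarly $C'^\pm$ by the cosets of $\langle y\rangle$, I would track the free $\mathbb{Z}_2^2$-orbit $\pi^{-1}(P)=\{e\tilde P,a\tilde P,b\tilde P,c\tilde P\}$ and observe that its four points distribute bijectively among the four pairs $(C^\epsilon,C'^\delta)$, using that $\langle x\rangle$ and $\langle y\rangle$ are distinct order-$2$ subgroups intersecting only in $\{e\}$. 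This gives $C^\epsilon\cdot C'^\delta=1$ for all sign choices, so each hexagon edge lifts to a complete bipartite graph $K_{2,2}$ in $S$, reproducing the graph in Figure~\ref{D16-En-dualgraph12(-2)-curves}.

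The main technical point is the intersection analysis in the adjacent case, requiring careful bookkeeping of cosets in $\mathbb{Z}_2^2$; the remaining steps are routine applications of Pardini's theory combined with the classical geometry of $(-1)$-curves on $\Bl_3\mathbb{P}^2$.
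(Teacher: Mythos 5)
Your argument is correct, and it reaches the configuration of Figure~\ref{D16-En-dualgraph12(-2)-curves} by a genuinely different route from the paper. The paper factors the $\mathbb{Z}_2^2$-cover as a tower of two double covers $S\rightarrow S'\rightarrow\Bl_3\mathbb{P}^2$ (the first branched along $\ell_1+\ell_1'+\ell_2+\ell_2'$, the second along the preimage of $\ell_3+\ell_3'$ and the four $A_1$ points of $S'$) and reads off the preimages of the six $(-1)$-curves pictorially, step by step, in Figure~\ref{Z22coverand12(-2)-curves}. You instead work directly with the bidouble cover via Pardini's structure theory \cite{Par91}: the observation that each $(-1)$-curve meets exactly one of $D_a,D_b,D_c$, in two transverse points, pins down the monodromy image as a single order-$2$ subgroup, hence two disjoint $(-2)$-components over each $(-1)$-curve; and the coset argument over the preimage $\pi^{-1}(P)$ of a hexagon-edge intersection point (which you correctly check lies off the branch locus) forces the four lifted intersection numbers to be $1$ each, either by transversality or by the count $\sum C^{\epsilon}\cdot C'^{\delta}=4C\cdot C'=4$ with each term $\geq 1$. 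Your approach buys a self-contained, checkable computation that does not rely on a picture and generalizes readily to other abelian covers; the paper's tower-of-double-covers approach is shorter and makes the intermediate $4$-nodal surface $S'$ visible, which is geometrically informative but leaves more of the verification implicit in the figure. One small point worth making explicit in a final write-up: the identification of the component of $\pi^{-1}(C)$ containing $g\tilde P$ with the coset $g\langle x\rangle$ uses that the stabilizer of a component equals the monodromy image, which is standard for Galois covers but deserves a sentence.
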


\begin{proof}
The $\mathbb{Z}_2^2$-cover $S\rightarrow\Bl_3\mathbb{P}^2$ can be realized as the composition of two double covers $S\rightarrow S'\rightarrow\Bl_3\mathbb{P}^2$: the first double cover is branched along $\ell_1+\ell_1'+\ell_2+\ell_2'$, and the second one is branched along the preimage of $\ell_3+\ell_3'$ and the four $A_1$ singularities of $S'$. The preimage of the six $(-1)$-curves in $\Bl_3\mathbb{P}^2$ is computed step by step in Figure~\ref{Z22coverand12(-2)-curves}, and on the right we can see the resulting configuration on the Enriques surface $S$.
\end{proof}

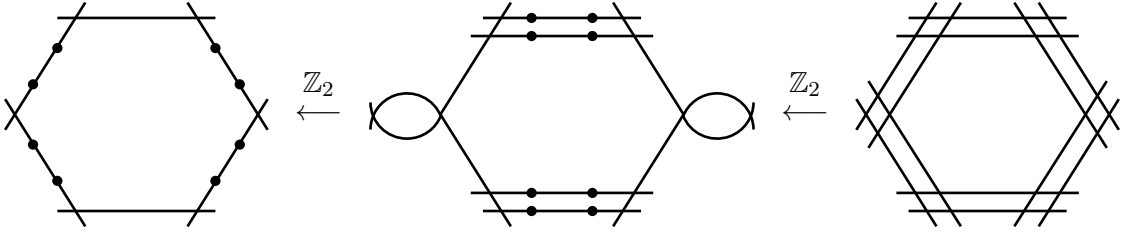
\begin{figure}
\begin{tikzpicture}[scale=0.8]

	\draw[line width=1.0pt] (-1.30000000000000,1.60000000000000) -- (1.30000000000000,1.60000000000000);
	\draw[line width=1.0pt] (0.841000317999046,1.85439949120153) -- (2.15899968200095,-0.254399491201526);
	\draw[line width=1.0pt] (2.15899968200095,0.254399491201526) -- (0.841000317999046,-1.85439949120153);
	\draw[line width=1.0pt] (1.30000000000000,-1.60000000000000) -- (-1.30000000000000,-1.60000000000000);
	\draw[line width=1.0pt] (-0.841000317999046,-1.85439949120153) -- (-2.15899968200095,0.254399491201526);
	\draw[line width=1.0pt] (-2.15899968200095,-0.254399491201526) -- (-0.841000317999046,1.85439949120153);

	\fill (-1.7,0.5) circle (2.4pt);
	\fill (-1.3,1.1) circle (2.4pt);

	\fill (1.7,0.5) circle (2.4pt);
	\fill (1.3,1.1) circle (2.4pt);

	\fill (-1.7,-0.5) circle (2.4pt);
	\fill (-1.3,-1.1) circle (2.4pt);

	\fill (1.7,-0.5) circle (2.4pt);
	\fill (1.3,-1.1) circle (2.4pt);

	\node at (3,0.5) {$\mathbb{Z}_2$};
	\node at (3,0) {$\longleftarrow$};

%-----------------------------------------------

	\draw[line width=1.0pt] (5.70000000000000,1.60000000000000) -- (8.30000000000000,1.60000000000000);
	\draw[line width=1.0pt] (5.50000000000000,1.30000000000000) -- (8.50000000000000,1.30000000000000);

	\draw[line width=1.0pt] (7.84100031799905,1.85439949120153) -- (9,0);
	\draw[line width=1.0pt] (9,0) -- (7.84100031799905,-1.85439949120153);
	\draw[line width=1.0pt] (8.30000000000000,-1.60000000000000) -- (5.70000000000000,-1.60000000000000);
	\draw[line width=1.0pt] (8.50000000000000,-1.30000000000000) -- (5.50000000000000,-1.30000000000000);

	\draw[line width=1.0pt] (6.15899968200095,-1.85439949120153) -- (5,0);
	\draw[line width=1.0pt] (5,0) -- (6.15899968200095,1.85439949120153);

	\fill (-0.5+7,1.6) circle (2.4pt);
	\fill (0.5+7,1.6) circle (2.4pt);
	\fill (-0.5+7,1.3) circle (2.4pt);
	\fill (0.5+7,1.3) circle (2.4pt);

	\fill (-0.5+7,-1.6) circle (2.4pt);
	\fill (0.5+7,-1.6) circle (2.4pt);
	\fill (-0.5+7,-1.3) circle (2.4pt);
	\fill (0.5+7,-1.3) circle (2.4pt);

    \draw[line width=1.0pt,rotate around={0:(1,1)}] (4.45,-0.25) [partial ellipse=20:180:0.6cm and 0.6cm];
    \draw[line width=1.0pt,rotate around={0:(1,1)}] (4.45,0.2) [partial ellipse=180:340:0.6cm and 0.6cm];

    \draw[line width=1.0pt,rotate around={0:(1,1)}] (4.45+5.1,-0.25) [partial ellipse=0:160:0.6cm and 0.6cm];
    \draw[line width=1.0pt,rotate around={0:(1,1)}] (4.45+5.1,0.2) [partial ellipse=200:360:0.6cm and 0.6cm];

	\node at (11,0.5) {$\mathbb{Z}_2$};
	\node at (11,0) {$\longleftarrow$};

%-----------------------------------------------

	\draw[line width=1.0pt] (12.7000000000000,1.60000000000000) -- (15.3000000000000,1.60000000000000);
	\draw[line width=1.0pt] (12.5000000000000,1.30000000000000) -- (15.5000000000000,1.30000000000000);

	\draw[line width=1.0pt] (14.8410003179990,1.85439949120153) -- (16.1589996820010,-0.254399491201526);
	\draw[line width=1.0pt] (14.4398021025836,1.85364667090929) -- (15.9601978974164,-0.553646670909291);

	\draw[line width=1.0pt] (16.1589996820010,0.254399491201526) -- (14.8410003179990,-1.85439949120153);
	\draw[line width=1.0pt] (14.4398021025836,-1.85364667090929) -- (15.9601978974164,0.553646670909291);

	\draw[line width=1.0pt] (15.3000000000000,-1.60000000000000) -- (12.7000000000000,-1.60000000000000);
	\draw[line width=1.0pt] (15.5000000000000,-1.30000000000000) -- (12.5000000000000,-1.30000000000000);

	\draw[line width=1.0pt] (13.1589996820010,-1.85439949120153) -- (11.8410003179990,0.254399491201526);
	\draw[line width=1.0pt] (13.5601978974164,1.85364667090929) -- (12.0398021025836,-0.553646670909291);

	\draw[line width=1.0pt] (11.8410003179990,-0.254399491201526) -- (13.1589996820010,1.85439949120153);
	\draw[line width=1.0pt] (13.5601978974164,-1.85364667090929) -- (12.0398021025836,0.553646670909291);

\end{tikzpicture}
\caption{$\mathbb{Z}_2^2$-cover of the six $(-1)$-curves in $\Bl_3\mathbb{P}^2$. The points correspond to the branching points.}
\label{Z22coverand12(-2)-curves}
\end{figure}

\begin{figure}
\begin{tikzpicture}[scale=0.8]

    \draw [line width=1.0pt] (7.,3.)-- (4.401923788646684,1.5);
    \draw [line width=1.0pt] (4.401923788646684,1.5)-- (7.,2.);
    \draw [line width=1.0pt] (5.267949192431123,1.)-- (7.,2.);
    \draw [line width=1.0pt] (5.267949192431123,1.)-- (7.,3.);
    \draw [line width=1.0pt] (7.,2.)-- (8.732050807568877,1.);
    \draw [line width=1.0pt] (7.,3.)-- (9.598076211353316,1.5);
    \draw [line width=1.0pt] (8.732050807568877,1.)-- (7.,3.);
    \draw [line width=1.0pt] (7.,2.)-- (9.598076211353316,1.5);
    \draw [line width=1.0pt] (8.732050807568877,1.)-- (8.732050807568877,-1.);
    \draw [line width=1.0pt] (8.732050807568877,-1.)-- (9.598076211353316,1.5);
    \draw [line width=1.0pt] (9.598076211353316,1.5)-- (9.598076211353316,-1.5);
    \draw [line width=1.0pt] (9.598076211353316,-1.5)-- (8.732050807568877,1.);
    \draw [line width=1.0pt] (8.732050807568877,-1.)-- (7.,-2.);
    \draw [line width=1.0pt] (7.,-2.)-- (9.598076211353316,-1.5);
    \draw [line width=1.0pt] (8.732050807568877,-1.)-- (7.,-3.);
    \draw [line width=1.0pt] (7.,-3.)-- (9.598076211353316,-1.5);
    \draw [line width=1.0pt] (5.267949192431123,-1.)-- (7.,-2.);
    \draw [line width=1.0pt] (4.401923788646684,-1.5)-- (7.,-3.);
    \draw [line width=1.0pt] (7.,-3.)-- (5.267949192431123,-1.);
    \draw [line width=1.0pt] (7.,-2.)-- (4.401923788646684,-1.5);
    \draw [line width=1.0pt] (5.267949192431123,1.)-- (5.267949192431123,-1.);
    \draw [line width=1.0pt] (4.401923788646684,1.5)-- (4.401923788646684,-1.5);
    \draw [line width=1.0pt] (5.267949192431123,1.)-- (4.401923788646684,-1.5);
    \draw [line width=1.0pt] (5.267949192431123,-1.)-- (4.401923788646684,1.5);

    \draw [fill=black] (7.,2.) circle (2.4pt);
    \draw [fill=black] (7.,-2.) circle (2.4pt);
    \draw [fill=black] (5.267949192431123,1.) circle (2.4pt);
    \draw [fill=black] (8.732050807568877,1.) circle (2.4pt);
    \draw [fill=black] (8.732050807568877,-1.) circle (2.4pt);
    \draw [fill=black] (5.267949192431123,-1.) circle (2.4pt);
    \draw [fill=black] (7.,3.) circle (2.4pt);
    \draw [fill=black] (7.,-3.) circle (2.4pt);
    \draw [fill=black] (4.401923788646684,1.5) circle (2.4pt);
    \draw [fill=black] (9.598076211353316,1.5) circle (2.4pt);
    \draw [fill=black] (9.598076211353316,-1.5) circle (2.4pt);
    \draw [fill=black] (4.401923788646684,-1.5) circle (2.4pt);

    \node at (7,2-0.5) {$2$};
    \node at (7,-2+0.5) {$8$};
    \node at (5.267949192431123+0.5,1-0.3) {$12$};
    \node at (8.732050807568877-0.5,1-0.3) {$4$};
    \node at (8.732050807568877-0.5,-1+0.3) {$6$};
    \node at (5.267949192431123+0.5,-1+0.3) {$10$};
    \node at (7.,3.+0.4) {$1$};
    \node at (7,-3-0.4) {$7$};
    \node at (4.401923788646684-0.5,1.5+0.3) {$11$};
    \node at (9.598076211353316+0.4,1.5+0.3) {$3$};
    \node at (9.598076211353316+0.4,-1.5-0.3) {$5$};
    \node at (4.401923788646684-0.4,-1.5-0.3) {$9$};

\end{tikzpicture}
\caption{Dual graph of the $12$ $(-2)$-curves on a $D_{1,6}$-polarized Enriques surface.}
\label{D16-En-dualgraph12(-2)-curves}
\end{figure}
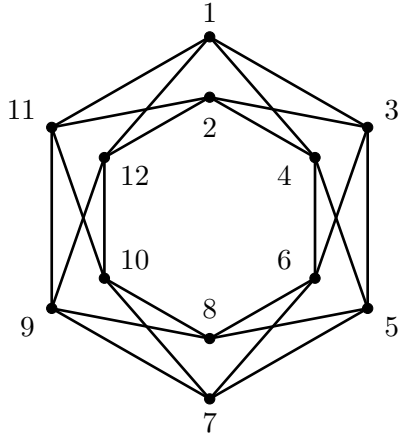

\begin{lemma}
Let $S$ be a $D_{1,6}$-polarized Enriques surface and let $S\rightarrow\Bl_3\mathbb{P}^2$ be the corresponding $\mathbb{Z}_2^2$-cover branched along $\sum_{i=1}^3(\ell_i+\ell_i')$. Let $E_i,E_i'\subseteq S$ be the preimages of $\ell_i,\ell_i'$ respectively. Then $E_i,E_i'$ are half-fibers. Additionally, we have the following numerical equivalences:
\begin{align*}
E_1&\equiv E_1'\equiv\frac{1}{2}(R_1+R_2+R_3+R_4)\equiv\frac{1}{2}(R_7+R_8+R_9+R_{10}),\\
E_2&\equiv E_2'\equiv\frac{1}{2}(R_3+R_4+R_5+R_6)\equiv\frac{1}{2}(R_9+R_{10}+R_{11}+R_{12}),\\
E_3&\equiv E_3'\equiv\frac{1}{2}(R_5+R_6+R_7+R_8)\equiv\frac{1}{2}(R_{11}+R_{12}+R_1+R_2).
\end{align*}
\end{lemma}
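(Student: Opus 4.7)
The plan is to establish that each $E_i$ is a half-fiber of the elliptic fibration $f_i = \pi_i \circ \pi \colon S \to \mathbb{P}^1$ obtained by composing the $\mathbb{Z}_2^2$-cover $\pi$ with the ruling $\pi_i$, and then to deduce the numerical equivalences from the pullbacks of the reducible fibers of $\pi_i$. First, since $\ell_i$ is an irreducible component of the branch divisor with stabilizer of order $2$ in $\mathbb{Z}_2^2$, the scheme-theoretic pullback will satisfy $\pi^* \ell_i = 2 E_i$, and the induced map $E_i \to \ell_i$ will be a degree $2$ cover ramified precisely at the four points where $\ell_i$ meets the remaining two branch components. Granting connectedness, Riemann--Hurwitz then yields $g(E_i) = 1$. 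To show connectedness, I would invoke the fact that the elliptic fibration $f_i$ has connected fibers: this follows because the general fiber of $f_i$ is a $\mathbb{Z}_2^2$-cover of a general line in $|\pi_i|$ branched at four points (its intersections with the four branch lines $\ell_j, \ell_j'$ for $j \neq i$), hence a smooth elliptic curve. Consequently $E_i$ is a smooth elliptic curve, $2 E_i$ is a multiple fiber of $f_i$, and $E_i$ is a half-fiber. The identical argument for $\ell_i'$ gives $E_i'$ a half-fiber, and the linear equivalence $\ell_i \equiv \ell_i'$ as fibers of $\pi_i$ pulls back to $2E_i \equiv 2E_i'$ in $\mathrm{Num}(S)$; since $\mathrm{Num}(S) \cong U \oplus E_8$ is torsion-free, this yields $E_i \equiv E_i'$.

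Next, I would identify the reducible fibers of each ruling $\pi_i$ on $\Bl_3 \mathbb{P}^2$: these are pairs of $(-1)$-curves meeting transversely at one point, namely $L_{12} + E_{p_2}$ and $L_{13} + E_{p_3}$ for $\pi_1$ (where $L_{ij}$ denotes the proper transform of the line through $p_i, p_j$, and $E_{p_j}$ the exceptional divisor at $p_j$), and analogously for $\pi_2, \pi_3$. For a generic choice of the lines $\ell_1, \ldots, \ell_3'$, none of the six $(-1)$-curves on $\Bl_3 \mathbb{P}^2$ lies in the branch divisor of $\pi$, so by Lemma~\ref{computation12nodesond16enriquessurface} each of them pulls back to a disjoint union of two $(-2)$-curves on $S$. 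Thus the pullback of each reducible fiber of $\pi_i$ is a configuration of four $(-2)$-curves assembled as an $\widetilde{A}_3$ cycle (the two disjoint pairs glued through the preimages of the node $L_{12} \cap E_{p_2}$, etc.), which is a reduced fiber of $f_i$ and hence numerically equivalent to $2 E_i$.

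The final step is to match these four $(-2)$-curves to the labels of Figure~\ref{D16-En-dualgraph12(-2)-curves} via the step-by-step construction of the cover in Figure~\ref{Z22coverand12(-2)-curves}. This matching yields $2 E_1 \equiv R_1 + R_2 + R_3 + R_4 \equiv R_7 + R_8 + R_9 + R_{10}$, and the analogous identifications for $E_2, E_3$ follow by repeating the argument for $\pi_2, \pi_3$. I expect the main obstacle to be this last bookkeeping step: one must carefully track, through the two-step factorization $S \to S' \to \Bl_3 \mathbb{P}^2$ of Lemma~\ref{computation12nodesond16enriquessurface}, which pair of $(-2)$-curves in Figure~\ref{D16-En-dualgraph12(-2)-curves} arises from each of the six $(-1)$-curves $E_{p_1}, E_{p_2}, E_{p_3}, L_{12}, L_{13}, L_{23}$, and verify that the two reducible fibers of each $\pi_i$ assemble the precise indices appearing in the statement.
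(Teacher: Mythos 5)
Your argument is correct, but it is organized differently from the paper's proof, which is considerably terser. For the half-fiber claim, the paper extracts from the bi-double cover construction that $E_i,E_i'$ have genus one and that $E_i\cdot E_j=1$ for $i\neq j$, and concludes directly: an irreducible arithmetic-genus-one curve meeting another curve with odd intersection number is primitive in $\Num(S)$, hence an elliptic configuration, and cannot be a full fiber (those are $2$-divisible), so it is a half-fiber. You instead exhibit $2E_i=\pi^*\ell_i$ as the multiple fiber of the genus-one fibration $f_i=\pi_i\circ\pi$ over $\pi_i(\ell_i)$, proving genus one by Riemann--Hurwitz and connectedness via Stein factorization; this is valid, though you should spell out why the general fiber of $f_i$ is connected (the four branch points carry inertia groups $\langle b\rangle$ and $\langle c\rangle$, which together generate $\mathbb{Z}_2^2$). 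For the numerical equivalences, the paper simply observes that $R_1+R_2+R_3+R_4$ is an arithmetic-genus-one curve with $(R_1+R_2+R_3+R_4)\cdot E_1=0$ and concludes it lies in $|2E_1|=|2E_1'|$; your route, realizing each $\widetilde{A}_3$ cycle as the reduced pullback of a reducible fiber $L_{1j}+E_{p_j}$ of the ruling $\pi_1$, has the advantage of making it immediate that these cycles are \emph{full} fibers of $f_1$ rather than half-fibers (the pullback is reduced since the $(-1)$-curves are not branch components), a point the paper's one-line argument leaves implicit. The remaining bookkeeping you flag --- matching the pullbacks of the six $(-1)$-curves to the labels of Figure~\ref{D16-En-dualgraph12(-2)-curves} through the factorization $S\to S'\to\Bl_3\mathbb{P}^2$ --- is genuinely needed but is exactly the content already recorded in Lemma~\ref{computation12nodesond16enriquessurface} and its figures, so this is not a gap.
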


\begin{proof}
From the bi-double cover construction in the proof of Lemma~\ref{computation12nodesond16enriquessurface} we can see that $E_i,E_i'$ are genus one curves and that $E_i\cdot E_j=E_i\cdot E_j'=E_i'\cdot E_j'=1$ for $i\neq j$. This guarantees that $E_i,E_i'$ are half-fibers. The numerical equivalence can be understood as follows. $R_1+R_2+R_3+R_4$ is an arithmetic genus one curve which intersects $E_1$ giving zero. So $R_1+R_2+R_3+R_4\in|2E_1|=|2E_1'|$. The other equivalences are analogous.
\end{proof}

We now compute a $\mathbb{Z}$-basis for $\Num(S)$.

\begin{lemma}
\label{num-of-D16-Enriques}
Let $S$ be a $D_{1,6}$-polarized Enriques surface, and consider the smooth rational curves $R_1,\ldots,R_{12}$ as in Figure~\ref{D16-En-dualgraph12(-2)-curves}. Then a $\mathbb{Z}$-basis for $\Num(S)$ is given by
\begin{gather*}
R_1,~R_2,~R_3,~R_5,~R_7,~R_9,~\frac{1}{2}(R_1+R_3+R_5+R_7+R_9+R_{11}),\\
E_1\equiv\frac{1}{2}(R_1+R_2+R_3+R_4),~E_2\equiv\frac{1}{2}(R_3+R_4+R_5+R_6),~E_3\equiv\frac{1}{2}(R_5+R_6+R_7+R_8).
\end{gather*}
\end{lemma}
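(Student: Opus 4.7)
Following Remark~\ref{remark-how-to-compute-num}, the plan is threefold: (i) verify that each of the ten listed classes belongs to $\Num(S)$; (ii) assemble their $10\times 10$ Gram matrix $M$ and check that $|\det M|=1$; (iii) conclude by unimodularity of $\Num(S)\cong U\oplus E_8$, since any rank-$10$ sublattice of an ambient unimodular lattice having discriminant $\pm 1$ must coincide with it.

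For step (i), the six curves $R_1,R_2,R_3,R_5,R_7,R_9$ are smooth rational, and $E_1,E_2,E_3$ are half-fibers by Lemma~\ref{computation12nodesond16enriquessurface}, so all nine lie in $\Num(S)$. The delicate point is to show that $v:=\tfrac12(R_1+R_3+R_5+R_7+R_9+R_{11})\in\Num(S)$. From Figure~\ref{D16-En-dualgraph12(-2)-curves} one reads that the outer hexagon $H:=R_1+R_3+R_5+R_7+R_9+R_{11}$ is a $6$-cycle of $(-2)$-curves, hence supported on $\widetilde A_5$, with $H^2=0$. The plan is to show that $[H]$ is \emph{not} primitive in $\Num(S)$: equivalently, $H$ is the support of a full (reduced) fiber of the elliptic pencil it determines, rather than a half-fiber. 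If so, $|H|$ has multiple fibers $2F_\pm$ whose half-fibers satisfy $[F_\pm]=v\in\Num(S)$. The cleanest way to establish this is via Lemma~\ref{tooltodistinguishfiberfromhalf-fiber} applied to the universal K3 cover $\pi\colon X\to S$: one verifies that $\pi^{-1}(H)$ is disconnected. Alternatively, one may exploit the $\mathbb{Z}_2^2$-symmetry of the cover $S\to\Bl_3\mathbb{P}^2$, which swaps the outer hexagon with the inner hexagon $R_2+R_4+R_6+R_8+R_{10}+R_{12}$ and thus exhibits a second fiber in $|H|$ explicitly.

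For step (ii), the entries of $M$ are computed from the intersection numbers encoded in Figure~\ref{D16-En-dualgraph12(-2)-curves}, together with the linearity identities $E_i\cdot R_k=\tfrac12\, R_k\cdot(2E_i)$ and $v\cdot R_k=\tfrac12\, R_k\cdot(2v)$ (where $2E_i$ and $2v$ are the integer divisors $R_1+R_2+R_3+R_4$, etc., and $R_1+R_3+R_5+R_7+R_9+R_{11}$), as well as $E_i\cdot E_j=1-\delta_{ij}$, $v^2=0$, and the resulting values $v\cdot E_i\in\Z$. Evaluation of $\det M$ is then a direct (if tedious) determinantal calculation that yields $|\det M|=1$. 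Combined with step (iii), this shows that the ten classes form a $\mathbb{Z}$-basis of $\Num(S)$. The principal obstacle is step (i) for $v$: the graph-theoretic data alone cannot distinguish a fiber from a half-fiber, so the geometric input provided by Lemma~\ref{tooltodistinguishfiberfromhalf-fiber} on the K3 cover is essential.
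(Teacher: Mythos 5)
Your overall architecture (check membership in $\Num(S)$, compute the $10\times10$ Gram matrix, invoke unimodularity of $\Num(S)\cong U\oplus E_8$) is sound and differs from the paper, which instead starts from the sublattice $L$ generated by $R_1,R_2,R_3,R_5,R_7,R_9,R_{11},E_1,E_2,E_3$, computes $L^*/L\cong\mathbb{Z}_2^2$, and identifies $\Num(S)$ as the overlattice determined by an isotropic class in the discriminant group. However, your step (i) for $v=\tfrac12(R_1+R_3+R_5+R_7+R_9+R_{11})$ has a genuine gap. The discriminant-group analysis shows that the two candidates $\tfrac12(R_1+R_3+R_5+R_7+R_9+R_{11})$ and $\tfrac12(R_2+R_3+R_5+R_7+R_9+R_{11})$ cannot both lie in $\Num(S)$ (their difference $\tfrac12(R_1-R_2)$ has odd square) while exactly one must, and the graph automorphism exchanging $R_1\leftrightarrow R_2$ swaps them. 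Hence no argument using only the data of Figure~\ref{D16-En-dualgraph12(-2)-curves} can decide which hexagon is the fiber; the paper resolves this by \emph{fixing the labeling} ``up to relabeling $R_1$ and $R_2$.'' Your plan to ``verify that $\pi^{-1}(H)$ is disconnected'' is therefore not something you can carry out from the stated hypotheses without first tracing the explicit bi-double cover construction to pin down which geometric curve is called $R_1$ — and you do not perform that verification, you only announce it.

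Your fallback argument is moreover incorrect as stated: the outer hexagon $H=R_1+R_3+R_5+R_7+R_9+R_{11}$ and the inner hexagon $H'=R_2+R_4+R_6+R_8+R_{10}+R_{12}$ satisfy $H\cdot H'=12\neq 0$ (each $R_{2k+1}$ meets exactly two of the even-indexed curves), so $H'$ is not a second member of the pencil $|H|$, and the $\mathbb{Z}_2^2$-symmetry swapping the two hexagons tells you only that they are simultaneously fibers or simultaneously half-fibers of their respective (distinct) pencils — it does not exhibit $[H]$ as $2$-divisible. The clean repair is exactly the paper's: show that $\Num(S)$ is a proper even overlattice of $L$, list the nonzero isotropic classes of $L^*/L$, observe that precisely one of the two candidates lies in $\Num(S)$, and normalize the labels accordingly. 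With that in place, your steps (ii) and (iii) go through: the ten classes generate $L+\mathbb{Z}v$, their Gram determinant is $\pm1$, and a full-rank sublattice of a unimodular lattice with unimodular Gram matrix is the whole lattice.
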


\begin{proof}
We follow the strategy of Remark~\ref{remark-how-to-compute-num} to determine a basis of $\Num(S)$. Let $L$ be the sublattice of $\Num(S)$ generated by the following elements:
\[
R_1,~R_2,~R_3,~R_5,~R_7,~R_9,~R_{11},~E_1,~E_2,~E_3.
\]
Let $B$ be the $10\times10$ matrix of intersection of the above generators of $L$. As the determinant of $B$ is nonzero, we have that the lattice $L$ has rank $10$. As $\Num(S)$ is an even overlattice of $L$, it corresponds to an isotropic subgroup of the discriminant group $L^*/L$, which we now compute. The rows of $B^{-1}$ generate $L^*$, and to better identify a set of generators of $L^*/L$ we compute the Smith normal form of $B^{-1}$. The function \texttt{smith\_form()} in SageMath returns two matrices $M_1,M_2\in\SL_{10}(\mathbb{Z})$ such that $M_1B^{-1}M_2$ is the diagonal matrix $\mathrm{diag}\left(1,\ldots,1,\frac{1}{2},\frac{1}{2}\right)$. This implies that $L^*/L\cong\mathbb{Z}_2^2$, and the rows of $M_1B^{-1}$ give an alternative basis for $L^*$. Using these we can find that the isotropic vectors of $L^*/L$ are the classes of:
\[
\frac{1}{2}(R_1+R_3+R_5+R_7+R_9+R_{11}),~\frac{1}{2}(R_2+R_3+R_5+R_7+R_9+R_{11}).
\]
Note that these cannot both be in $\Num(S)$, otherwise $\frac{1}{2}(R_1+R_2)$ would be an element of $\Num(S)$, which is impossible as it has odd square. Moreover, one of the two vectors above has to be in $\Num(S)$, so up to relabeling $R_1$ and $R_2$ we fix that $\frac{1}{2}(R_1+R_3+R_5+R_7+R_9+R_{11})\in\Num(S)$, and together with $L$ they generate $\Num(S)$. To obtain the claimed $\mathbb{Z}$-basis, we can then drop the curve $R_{11}$, which became redundant.
\end{proof}

\begin{proposition}
Let $S$ be $D_{1,6}$-polarized Enriques surface and let $\mathcal{R}$ be the configuration of $12$ smooth rational curves on $S$ as in Figure~\ref{D16-En-dualgraph12(-2)-curves}. The elliptic fibrations in $\mathcal{E}(S,\mathcal{R})$ are
\[
3 \times (2\widetilde{A}_3^{\f}), 24 \times (\widetilde{A}_3^{\hf}), 32 \times (\widetilde{A}_5^{\f}), 32 \times (\widetilde{A}_5^{\hf}), 12 \times (\widetilde{D}_4^{\f}), 24 \times (\widetilde{D}_5^{\f}), 48 \times (\widetilde{D}_6^{\f}).
\]

We have that $\cnd(S,\mathcal{R})=10$, and therefore $\nd(S)=10$. An explicit isotropic sequence realizing $\cnd(S,\mathcal{R})=10$ is given by the numerical equivalence classes of:
\begin{align*}
    E_1\equiv\frac{1}{2}(R_1+R_2+R_3+R_4) \qquad &(\widetilde{A}_3^{\f})\\
    E_2\equiv\frac{1}{2}(R_3+R_4+R_5+R_6) \qquad &(\widetilde{A}_3^{\f})\\
    E_3\equiv\frac{1}{2}(R_5+R_6+R_7+R_8) \qquad &(\widetilde{A}_3^{\f})\\
    \frac{1}{2}(R_{1} + R_{3} + R_{5} + R_{8} + R_{9} +  R_{12}) \qquad &(\widetilde{A}_5^{\f})\\
    \frac{1}{2}(R_{1} + R_{4} + R_{5} + R_{7} + R_{9} +  R_{12})  \qquad &(\widetilde{A}_5^{\f})\\
    \frac{1}{2}(R_{1} + R_{4} + R_{5} + R_{8} + R_{9} +  R_{11})  \qquad &(\widetilde{A}_5^{\f})\\
    \frac{1}{2}(R_{1} + R_{3} + R_{5} + R_{7} + R_{9} +  R_{11})  \qquad &(\widetilde{A}_5^{\f})\\
    \frac{1}{2}(2R_1 + R_3 + R_4 + R_{11} + R_{12}) \qquad &(\widetilde{D}_4^{\f})\\
    \frac{1}{2}( R_{3} + R_{4} + 2R_5 + R_7 + R_8) \qquad &(\widetilde{D}_4^{\f})\\
    \frac{1}{2}(R_{7} + R_{8} + 2R_9 + R_{11} + R_{12}) \qquad &(\widetilde{D}_4^{\f}).
\end{align*}
\end{proposition}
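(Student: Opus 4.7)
The plan is to feed the data of Lemma~\ref{num-of-D16-Enriques} into the algorithm \texttt{CndFinder} described in \S\,\ref{section-describing-code}. Concretely, from Figure~\ref{D16-En-dualgraph12(-2)-curves} one reads off the $12\times 12$ intersection matrix of $\mathcal R = \{R_1,\ldots,R_{12}\}$, and the $\mathbb{Z}$-basis of $\Num(S)$ provided by Lemma~\ref{num-of-D16-Enriques} supplies the array \texttt{BasisNum}. The first four steps of the algorithm then enumerate all subsets of $\mathcal R$ whose dual graph is an extended Dynkin diagram of type $\widetilde{A}_n,\widetilde{D}_n,\widetilde{E}_n$ compatible with Remark~\ref{rmk:rankOfAnDn}, distinguish fibers from half-fibers by strategy (2) of Remark~\ref{rmk:catFiberHalfFiber} (pairing each primitive elliptic configuration with the basis of $\Num(S)$), and collapse the redundancies coming from different configurations supporting the same pencil $|2F|$. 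This output yields the tally
\[
3 \times (2\widetilde{A}_3^{\f}),\ 24 \times (\widetilde{A}_3^{\hf}),\ 32 \times (\widetilde{A}_5^{\f}),\ 32 \times (\widetilde{A}_5^{\hf}),\ 12 \times (\widetilde{D}_4^{\f}),\ 24 \times (\widetilde{D}_5^{\f}),\ 48 \times (\widetilde{D}_6^{\f}).
\]

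Next, Step~5 of the algorithm recursively builds isotropic sequences by pairing elements of $\mathsf{HF}(S,\mathcal R)$ and checking \eqref{eq:FiFj=1-dij}. The inequality $\cnd(S,\mathcal R)\leq \nd(S)\leq 10$ gives an a priori bound; to conclude $\cnd(S,\mathcal R)=10$ it suffices to exhibit one length-10 non-degenerate isotropic sequence of elements of $\mathsf{HF}(S,\mathcal R)$. For this I would just verify the displayed sequence by hand using the basis of Lemma~\ref{num-of-D16-Enriques}: each of the three $\widetilde{A}_3$-configurations $E_1,E_2,E_3$ is a half-fiber by the preceding lemma, the four $\widetilde{A}_5$-type classes and the three $\widetilde{D}_4$-type classes belong to $\mathsf{HF}(S,\mathcal R)$ because they are elliptic vectors in $L_G$ whose halving lies in $\Num(S)$ (for the $\widetilde{A}_5$ fibers this is forced by the basis element $\tfrac12(R_1+R_3+R_5+R_7+R_9+R_{11})$, and for the $\widetilde{D}_4$ type by the standard formula for a $\widetilde{D}_4$ fiber). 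The pairwise intersection numbers are then a routine computation in $\Num(S)$: by construction $E_i\cdot E_j=1$ for $i\neq j$, while the four $\widetilde{A}_5$'s intersect each other and each $E_k$ in exactly one of the shared $R_i$, and similarly the three $\widetilde{D}_4$'s meet along the central $(-2)$-curves $R_1,R_5,R_9$. All $45$ pairwise products then equal $1$.

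The only potentially delicate point is the fiber/half-fiber discrimination used in identifying $\mathsf{HF}(S,\mathcal R)$, i.e., the integrality check of $\tfrac12 C$ against \texttt{BasisNum}; this is exactly what Lemma~\ref{num-of-D16-Enriques} is designed to make algorithmic. The recursive enumeration of sequences is combinatorially heavy, but the grouping by type from Step~4 together with the ordering introduced in Step~5 keep it tractable, and in practice the above length-10 witness is exhibited by the search. Since $\cnd(S,\mathcal R)\geq 10$ from this witness and $\nd(S)\leq 10$ is automatic, we conclude $\cnd(S,\mathcal R)=\nd(S)=10$.
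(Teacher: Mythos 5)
Your proposal matches the paper's approach: the proposition is established by running \texttt{CndFinder} on the $12\times 12$ intersection matrix read off from Figure~\ref{D16-En-dualgraph12(-2)-curves} together with the basis of $\Num(S)$ from Lemma~\ref{num-of-D16-Enriques}, exactly as you describe, and the upper bound $\cnd(S,\mathcal{R})\leq\nd(S)\leq 10$ reduces everything to exhibiting the length-$10$ witness. Your additional remarks on hand-verifying the witness are sound (for the $\widetilde{D}_4$ classes one can simply invoke Lemma~\ref{possiblehalf-fibersincharacteristiczero} to see they are fibers), so the argument is correct and essentially identical to the paper's.
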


\begin{remark}
$\cnd(S,\mathcal{R})=10$ can be realized exactly in $16$ different ways, and these involve the same type of elliptic fibrations.  
\end{remark}

\begin{remark}
\label{D16-new-example}
Let $S$ be a $D_{1,6}$-polarized Enriques surface. $S$ is not general nodal because, for instance, the $(-2)$-curves $R_1,R_3$ are not equivalent modulo $2\Num(S)$: if by contradiction $R_1-R_3\in2\Num(S)$, then $(R_1-R_3)\cdot R_2$ should be even. However, $(R_1-R_3)\cdot R_2=1$. Moreover, a general $S$ does not have finite automorphism group because Enriques surfaces with finite automorphism group come at most in a one-dimensional family. However, we have a $4$-dimensional family of $D_{1,6}$-polarized Enriques surfaces. Alternatively, the automorphism group of a $D_{1,6}$-polarized Enriques surface is infinite because the dual graph of smooth rational curves in Figure~\ref{D16-En-dualgraph12(-2)-curves} is not a subgraph of the graphs in Figures~\ref{fig_dual_graph_KonI}--\ref{fig_dual_graph_K7}. These are the dual graphs of all smooth rational curves on Enriques surfaces with finite automorphism group, which are discussed in \S\,\ref{sec:ndKondoExamples}. Finally, a very general $S$ is not Hessian. To prove this, let $X\rightarrow S$ be the universal K3 covering. Then, by \cite[Theorem~4.6~(iii)]{Sch18} we know that the discriminant group of $\mathrm{NS}(X)$ is isomorphic to $\mathbb{Z}_2^2\oplus\mathbb{Z}_4^2$. On the other hand, the N\'eron--Severi group of the K3 cover of a Hessian Enriques surface has discriminant group isomorphic to $\mathbb{Z}_2^4\oplus\mathbb{Z}_3$ by \cite[\S\,4]{Kon12}.
\end{remark}

%---------------------------------------------------------------------------------

\section{Enriques surfaces with eight disjoint smooth rational curves}
\label{sec:LopesPardini}

In \cite{MLP02} Mendes Lopes and Pardini classified complex Enriques surfaces with eight disjoint smooth rational curves. These form two $2$-dimensional families, both obtained from a product of two elliptic curves, $A\coloneqq D_1\times D_2$, as the minimal resolution of a finite quotient of $A$.
We recall their constructions, which come with a distinguished configuration of smooth rational curves, and apply our code to these configurations.

%-------------------------------------------

\subsection{Example 1}
\label{ex-1-MLP}

Let $a\in D_1$ and $b\in D_2$ be $2$-torsion points, and let $e_1,e_2$ be generators for $\mathbb{Z}_2^2$. Let $e_1,e_2$ act on $A$ as follows:
\begin{align*}
e_1\cdot(x_1,x_2)&=(-x_1,x_2+b),\\
e_2\cdot(x_1,x_2)&=(x_1+a,-x_2).
\end{align*}
The quotient of $A$ by this $\mathbb{Z}_2^2$-action is a surface $\Sigma$ with eight $A_1$ singularities. Its minimal resolution $S$ is an Enriques surface whose universal cover $X$, a Kummer surface, is the resolution of $A/(e_1+e_2)$ at its 16 singular points. $S$ admits two elliptic fibrations induced by the projections $p_i\colon\Sigma \to D_i/\mathbb{Z}_2^2\cong\mathbb{P}^1$, $i=1,2$. Each $p_i$ has two double fibers $F_i,F_i'$ supported on two smooth rational curves. Four of the $A_1$ singularities lie on $F_i$, and the other four on $F_i'$. Moreover, each $F_1,F_1'$ intersects each $F_2,F_2'$ in exactly two $A_1$ singularities. Therefore, the elliptic fibration $f_i \colon S\to \Sigma\xrightarrow{p_i}\pr 1$ has two fibers of Kodaira type $\widetilde{D}_4$. The configuration of $12$ smooth rational curves $R_1,\ldots,R_{12}$ on $S$ which arises from the singular fibers of $f_1,f_2$ is pictured in Figure~\ref{fig_dual_graph_LP1}.

\begin{figure}
\begin{tikzpicture}[scale=0.8]

	\draw[line width=1.0pt] (0,2) -- (-1.6,1.6);
	\draw[line width=1.0pt] (0,2) -- (1.6,1.6);
	\draw[line width=1.0pt] (0,2) -- (-1,1);	
	\draw[line width=1.0pt] (0,2) -- (1,1);
	
	\draw[line width=1.0pt] (-2,0) -- (-1.6,1.6);
	\draw[line width=1.0pt] (2,0) -- (1.6,1.6);
	\draw[line width=1.0pt] (-2,0) -- (-1,1);	
	\draw[line width=1.0pt] (2,0) -- (1,1);
	
	\draw[line width=1.0pt] (0,-2) -- (-1.6,-1.6);
	\draw[line width=1.0pt] (0,-2) -- (1.6,-1.6);
	\draw[line width=1.0pt] (0,-2) -- (-1,-1);	
	\draw[line width=1.0pt] (0,-2) -- (1,-1);
	
	\draw[line width=1.0pt] (-2,0) -- (-1.6,-1.6);
	\draw[line width=1.0pt] (2,0) -- (1.6,-1.6);
	\draw[line width=1.0pt] (-2,0) -- (-1,-1);	
	\draw[line width=1.0pt] (2,0) -- (1,-1);

	\fill (0,2) circle (2.4pt);
	\fill (-2,0) circle (2.4pt);
	\fill (2,0) circle (2.4pt);
	\fill (0,-2) circle (2.4pt);

	\fill (-1.6,1.6) circle (2.4pt);
	\fill (-1,1) circle (2.4pt);
	\fill (1.6,1.6) circle (2.4pt);
	\fill (1,1) circle (2.4pt);
	\fill (-1.6,-1.6) circle (2.4pt);
	\fill (-1,-1) circle (2.4pt);
	\fill (1.6,-1.6) circle (2.4pt);
	\fill (1,-1) circle (2.4pt);

	\node at (0,2+0.4) {$1$};
	\node at (-2-0.4,0) {$10$};
	\node at (2+0.4,0) {$4$};
	\node at (0,-2-0.4) {$7$};
	\node at (-1.6-0.3,1.6+0.3) {$11$};
	\node at (-1+0.3,1-0.3) {$12$};
	\node at (1.6+0.3,1.6+0.3) {$2$};
	\node at (1-0.3,1-0.3) {$3$};
	\node at (-1.6-0.3,-1.6-0.3) {$8$};
	\node at (-1+0.3,-1+0.3) {$9$};
	\node at (1.6+0.3,-1.6-0.3) {$5$};
	\node at (1-0.3,-1+0.3) {$6$};

\end{tikzpicture}
\caption{Dual graph of the rational curves $R_1,\ldots,R_{12}$ in \cite[Example~1]{MLP02}.}
\label{fig_dual_graph_LP1}
\end{figure}
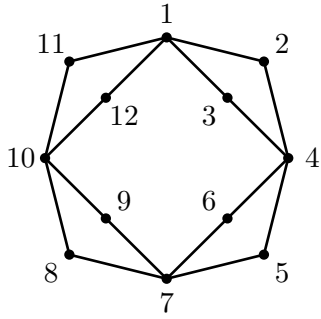

\begin{proposition}
\label{prop:num_LP1}
For an Enriques surface $S$ as above, let $R_1,\ldots,R_{12}$ be the $12$ smooth rational curves as in Figure~\ref{fig_dual_graph_LP1}. Then the lattice $\Num(S)$ is generated by
\begin{gather*}
R_1,~R_2,~R_3,~R_4,~R_5,~R_7,~R_9,\\
A=\frac{1}{2}(R_2+R_3+R_5+R_6),~B=\frac{1}{2}(R_2+R_3+R_{11}+R_{12}),\\
C=\frac{1}{2}(R_1+R_2+R_4+R_5+R_7+R_8+R_{10}+R_{11}).
\end{gather*}
\end{proposition}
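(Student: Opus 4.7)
The plan is to follow the strategy of Remark~\ref{remark-how-to-compute-num} and mimic the approach of Lemma~\ref{num-of-D16-Enriques}. I would start by setting $L$ equal to the sublattice of $\Num(S)$ generated by a chosen rank-$10$ collection of classes, for instance $R_1,R_2,R_3,R_4,R_5,R_6,R_7,R_8,R_9,R_{10}$ (or any convenient rank-$10$ subset of $\{R_1,\ldots,R_{12}\}$). Using the intersection matrix extracted from Figure~\ref{fig_dual_graph_LP1}, I would verify that $\det(L)$ is nonzero so that $L$ has rank $10$, and then invoke Nikulin's correspondence: the even overlattice $\Num(S)$ corresponds to an isotropic subgroup of $A_L=L^*/L$. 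As in the proof of Lemma~\ref{num-of-D16-Enriques}, the Smith normal form of the inverse intersection matrix (computable via SageMath's \texttt{smith\_form()}) yields a concrete set of generators for $L^*$ and hence for $A_L$ together with the induced discriminant form $q_L$.

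Next, I would list all the isotropic elements of $(A_L,q_L)$. Some of these classes can be realized in $\Num(S)$ using the geometry of the two elliptic fibrations $f_1,f_2\colon S\to\mathbb{P}^1$. More precisely, the $\widetilde{D}_4$ fibers supported on $\{R_1,R_2,R_3,R_{11},R_{12}\}$ and $\{R_4,R_2,R_3,R_5,R_6\}$ are full fibers (by Lemma~\ref{possiblehalf-fibersincharacteristiczero}), so if $G_i$ denotes the class of a half-fiber of $f_i$ then $2G_1\equiv 2R_1+R_2+R_3+R_{11}+R_{12}$ and $2G_2\equiv 2R_4+R_2+R_3+R_5+R_6$; thus $B=G_1-R_1$ and $A=G_2-R_4$ lie in $\Num(S)$. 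For $C$, I would observe that $R_1+R_2+R_4+R_5+R_7+R_8+R_{10}+R_{11}$ is an $\widetilde{A}_7$-cycle (one can read this off the dual graph), so by Lemma~\ref{ellipticconfigurationgivesgenusonepencil} and Lemma~\ref{possiblehalf-fibersincharacteristiczero} it is either a half-fiber (giving $C\in\Num(S)$) or twice a half-fiber (so $C$ is itself the half-fiber class and again lies in $\Num(S)$). To distinguish which, I would apply strategy~(1) of Remark~\ref{rmk:catFiberHalfFiber} on the Kummer K3 cover $X\to S$.

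I would then show that $A,B,C$ together with the integral classes $R_i$ generate the full overlattice, by checking that their images in $A_L$ exhaust a maximal isotropic subgroup, and finally simplify the list of generators: since $A$ expresses $R_6$ in terms of $R_2,R_3,R_5,A$, and analogous reductions eliminate $R_6,R_8,R_{10},R_{11},R_{12}$ in favor of $A,B,C$ and the surviving rational curves, one arrives at the displayed ten-element basis. The remaining isotropic classes in $A_L$ would be ruled out by parity obstructions in the spirit of the $D_{1,6}$ argument: a class like $\tfrac{1}{2}(R_i\pm R_j)$ for two curves with $R_i\cdot R_j=0$ would force $\tfrac{1}{2}(R_i+R_j)$ to have odd square, contradicting evenness of $\Num(S)$.

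The main obstacle will be the bookkeeping in Step~2: the discriminant group of the chosen $L$ has several isotropic vectors, and deciding precisely which linear combinations of halved sums of $R_i$'s actually sit in $\Num(S)$ requires combining lattice-theoretic exclusions (parity/evenness) with explicit geometric realizations coming from the two elliptic fibrations $f_1,f_2$ and the $\widetilde{A}_7$ cycle above. Once this is handled, the remainder of the argument is a routine linear-algebra cleanup, and the verification that the ten proposed classes are $\mathbb{Z}$-independent is immediate from the form of their coefficient matrix with respect to $R_1,\ldots,R_{12}$.
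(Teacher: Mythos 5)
Your realization of $A$ and $B$ via the $\widetilde{D}_4$ fibers matches the paper, and the concluding step (verifying that the ten stated classes generate, e.g.\ by checking their Gram matrix has determinant $1$) is routine. The genuine gap is in your treatment of $C$. You assert that whether the $\widetilde{A}_7$ cycle $D=R_1+R_2+R_4+R_5+R_7+R_8+R_{10}+R_{11}$ is a half-fiber or a fiber, in both cases $C=\tfrac12[D]\in\Num(S)$. That is false: a half-fiber class is primitive in $\Num(S)$, so if $D$ were a half-fiber then $\tfrac12[D]\notin\Num(S)$. Only the fiber case gives $C\in\Num(S)$, so proving that $D$ is a fiber is the substantive content of the proof, not an afterthought. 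Moreover it cannot be read off the dual graph alone: this configuration carries eight $\widetilde{A}_7$ fibers and eight $\widetilde{A}_7$ half-fibers, so the status of a specific cycle depends on a labeling choice. The paper argues by contradiction: if both $R_1+R_2+R_4+R_5+R_7+R_8+R_{10}+R_{11}$ and $R_1+R_3+R_4+R_5+R_7+R_8+R_{10}+R_{11}$ were half-fibers, their preimages in the K3 cover would be connected, forcing the preimage of $F_1=R_1+R_2+R_3+R_4$ to be disconnected, i.e.\ $F_1$ a fiber; but then $F_1$ and the $\widetilde{D}_4$ fiber $F_2=R_2+R_3+2R_4+R_5+R_6$ would both be divisible by $2$ in $\Num(S)$, contradicting $F_1\cdot F_2=2\not\equiv0\pmod 4$. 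Hence at least one of the two $\widetilde{A}_7$ cycles is a fiber, and after relabeling $R_2\leftrightarrow R_3$ one may take it to be $D$. Your proposed fallback of ``checking connectedness on the Kummer cover'' is not carried out and would in any case require this relabeling step.

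A smaller but concrete problem: your first choice of starting sublattice, $L=\langle R_1,\ldots,R_{10}\rangle$, does not have rank $10$. The disjoint $\widetilde{A}_3$ cycles $R_1+R_2+R_3+R_4$ and $R_7+R_8+R_9+R_{10}$ are the two half-fibers of a single pencil, hence numerically equivalent, which is a relation supported entirely on $R_1,\ldots,R_{10}$. The paper sidesteps the whole discriminant-group detour here: once $A$, $B$, $C$ are shown to lie in $\Num(S)$, unimodularity of the displayed ten classes finishes the proof directly.
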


\begin{proof}
By Lemma~\ref{possiblehalf-fibersincharacteristiczero}, the elliptic configurations with dual graph $\widetilde{D}_4$ are divisible by $2$ in $\Num(S)$. Hence, $A$ and $B$ are elements of $\Num(S)$.

Now consider the $\widetilde{A}_7$-type diagrams in Figure~\ref{fig_dual_graph_LP1} and assume by contradiction that they are all half-fibers. By Lemma~\ref{tooltodistinguishfiberfromhalf-fiber}, the preimages of $R_1+R_2+R_4+R_5+R_7+R_8+R_{10}+R_{11}$ and $R_1+R_3+R_4+R_5+R_7+R_8+R_{10}+R_{11}$ are connected in the covering K3, and this forces the preimage of $F_1=R_1+R_2+R_3+R_4$ to be disconnected, which means that $F_1$ is a fiber. On the other hand, also $F_2=R_2+R_3+2R_4+R_5+R_6$ is a fiber, which creates a contradiction as $F_1\cdot F_2=2$ is not divisible by $4$. This shows that there exists a curve of type $\widetilde{A}_7$ which is a fiber. Up to relabeling $R_2$ and $R_3$, we can fix that $R_1+R_2+R_4+R_5+R_7+R_8+R_{10}+R_{11}$ is a fiber.

Finally, we can conclude that the elements in $\Num(S)$ in the statement form a basis, since their intersection matrix has determinant $1$.
\end{proof}

\begin{proposition}
Let $S$ be an Enriques surface as in \S\,\ref{ex-1-MLP} and let $\mathcal{R}$ be the configuration of $12$ smooth rational curves on $S$ as in Figure~\ref{fig_dual_graph_LP1}. The elliptic fibrations in $\mathcal{E}(S,\mathcal{R})$ are
\[
2 \times (2 \widetilde{A}_3^{\hf}), 8 \times (\widetilde{A}_7^{\f}), 8 \times (\widetilde{A}_7^{\hf}), 2 \times (2\widetilde{D}_4^{\f}), 8 \times (\widetilde{D}_6^{\f}), 16 \times (\widetilde{D}_8^{\f}).
\]

We have that $\cnd(S,\mathcal{R})=8$, and therefore $\nd(S)\geq8$. An explicit isotropic sequence realizing $\cnd(S,\mathcal{R})=8$ is given by the numerical equivalence classes of:
\begin{align*}
R_{1}+R_{2}+R_{3}+R_{4} \qquad &(\widetilde{A}_3^{\hf})\\
\frac{1}{2}(R_{1}+R_{2}+R_{4}+R_{5}+R_{7}+R_{8}+R_{10}+R_{11}) \qquad &(\widetilde{A}_7^{\f})\\
\frac{1}{2}(R_{1}+R_{2}+R_{4}+R_{5}+R_{7}+R_{9}+R_{10}+R_{12}) \qquad &(\widetilde{A}_7^{\f})\\
\frac{1}{2}(R_{1}+R_{3}+R_{4}+R_{5}+R_{7}+R_{8}+R_{10}+R_{12}) \qquad &(\widetilde{A}_7^{\f})\\
\frac{1}{2}(R_{1}+R_{3}+R_{4}+R_{5}+R_{7}+R_{9}+R_{10}+R_{11}) \qquad &(\widetilde{A}_7^{\f})\\
\frac{1}{2}(R_{2}+R_{3}+2R_{4}+R_{5}+R_{6}) \qquad &(\widetilde{D}_4^{\f})\\
\frac{1}{2}(2R_{1}+R_{2}+R_{3}+R_{11}+R_{12}) \qquad &(\widetilde{D}_4^{\f})\\
\frac{1}{2}(R_{2}+R_{3}+2R_{4}+2R_{5}+2R_{7}+R_{8}+R_{9})\qquad &(\widetilde{D}_6^{\f}).
\end{align*}
\end{proposition}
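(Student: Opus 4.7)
The plan is to apply the \texttt{CndFinder} routine of \S\,\ref{section-describing-code} to the data prepared for this example, namely the $12\times 12$ intersection matrix $M$ of the curves $R_1,\ldots,R_{12}$ read off from Figure~\ref{fig_dual_graph_LP1}, together with the $\mathbb{Z}$-basis of $\Num(S)$ from Proposition~\ref{prop:num_LP1}. The algorithm returns both the enumeration of $\mathcal{E}(S,\mathcal{R})$ and the value of $\cnd(S,\mathcal{R})$, and the bound $\nd(S)\geq\cnd(S,\mathcal{R})$ is automatic from the definitions.

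First I would trace through Steps 1--3 of the algorithm. Step 1 enumerates, for each extended Dynkin diagram of type $\widetilde{A}_n$ ($1\leq n\leq 8$), $\widetilde{D}_n$ ($4\leq n\leq 8$), or $\widetilde{E}_6,\widetilde{E}_7,\widetilde{E}_8$, every subset of $\mathcal{R}$ whose induced weighted subgraph has that type. Inspecting Figure~\ref{fig_dual_graph_LP1} one recognizes the $\widetilde{A}_3$ squares, the two central $\widetilde{D}_4$'s coming from the fibrations $f_1,f_2$, the $\widetilde{A}_7$ cycles winding around the graph, and the various $\widetilde{D}_6,\widetilde{D}_8$ chains; no $\widetilde{E}$-type subconfigurations appear. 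Step 2 decides whether each elliptic configuration $C$ is a fiber or a half-fiber via strategy (2) of Remark~\ref{rmk:catFiberHalfFiber}: one intersects $\tfrac{1}{2}[C]$ against the ten basis vectors of Proposition~\ref{prop:num_LP1} and tests integrality. Lemma~\ref{possiblehalf-fibersincharacteristiczero} forces all $\widetilde{D}_n$ configurations to be fibers, so only the $\widetilde{A}$-type cases actually require the test. Step 3 then collapses disjoint pairs of numerically equivalent half-fibers belonging to the same pencil, producing the claimed list
\[
2\times(2\widetilde{A}_3^{\hf}),\;8\times(\widetilde{A}_7^{\f}),\;8\times(\widetilde{A}_7^{\hf}),\;2\times(2\widetilde{D}_4^{\f}),\;8\times(\widetilde{D}_6^{\f}),\;16\times(\widetilde{D}_8^{\f}).
\]

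For the lower bound $\cnd(S,\mathcal{R})\geq 8$, I would simply verify that the eight displayed half-fiber classes form a non-degenerate isotropic sequence by computing all $\binom{8}{2}=28$ pairwise intersections directly from $M$; this is a mechanical check, and together with $\nd(S)\geq\cnd(S,\mathcal{R})$ it yields $\nd(S)\geq 8$.

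The main obstacle is the matching upper bound $\cnd(S,\mathcal{R})\leq 8$: one must exclude any length-$9$ or length-$10$ isotropic sequence among the $2+8+8+2+8+16=44$ classes in $\mathsf{HF}(S,\mathcal{R})$. This is precisely what the recursive Step 5 of \texttt{CndFinder} performs, using the type-based ordering on $\mathsf{HF}(S,\mathcal{R})$ and the per-type caps $m_i$ from Step 4 to prune the search; Step 6 then extracts the $\mathcal{R}$-saturated sequences. Carrying out this enumeration by hand would require a delicate case analysis constraining which types of fibrations can coexist in a non-degenerate isotropic sequence, and it is at this step that the computer-assisted approach is essential.
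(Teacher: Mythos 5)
Your proposal matches the paper's approach: the proposition is established precisely by feeding the intersection matrix of Figure~\ref{fig_dual_graph_LP1} and the basis from Proposition~\ref{prop:num_LP1} into \texttt{CndFinder}, with the lower bound checkable by hand and the upper bound resting on the exhaustive search of Step 5. The paper offers no further argument beyond the computation, so your account is complete and correct.
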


\begin{remark}
$\cnd(S,\mathcal{R})=8$ can be realized exactly by $8$ different isotropic sequences, which all have the same type. There are three other types of $\sR$-saturated sequences in Figure~\ref{fig_dual_graph_LP1}:
\begin{itemize}
    \item $24$ sequences of length $7$ and type $(2\widetilde{A}_3^\hf), 4 \times (\widetilde{A}_7^\f), 2 \times (2\widetilde{D}_4^\f)$.
    \item $8$ sequences of length $5$ and type $4 \times (\widetilde{A}_7^\f), (\widetilde{A}_7^\hf)$.
    \item $32$ sequences of length $5$ and type $2 \times (\widetilde{A}_7^\f), (\widetilde{D}_4^\f), (\widetilde{D}_6^\f), (\widetilde{D}_8^\f)$.
\end{itemize}
\end{remark}

%-------------------------------------------

\subsection{Example 2} 
\label{ex-2-MLP}

Let $a_i\in D_1$ and $b_i\in D_2$, $i=1,2,3$, denote the points of order 2, and let $e_1,e_2,e_3$ be the standard generators for $\Z_2^3$. Let $\Z_2^3$ act on $A$ by
\begin{align*}
e_1\cdot(x_1,x_2)&=(x_1+a_1,x_2+b_1),\\
e_2\cdot(x_1,x_2)&=(x_1+a_2,-x_2),\\
e_3\cdot(x_1,x_2)&=(-x_1,x_2+b_3).
\end{align*}
Again, we denote by $\pi\colon A \to (D_1\times D_2)/\Z_2^3\eqqcolon \Sigma$ the quotient map. One shows that $\Sigma$ has eight $A_1$ singularities and its minimal resolution $S$ is an Enriques surface with eight disjoint smooth rational curves. The projections of $A$ onto the two factors descend to elliptic fibrations $f_i\colon S \to \Sigma\xrightarrow{p_i} \pr 1$. For $i=1,2$, $p_i$ has two double fibers $F_i,F_i'$, each passing through four $A_1$ singularities of $\Sigma$. $F_1$ intersects $F_2$ in the four $A_1$ singularities, and $F_2'$ in two smooth points of $\Sigma$. $F_1'$ intersects $F_2'$ in the four $A_1$ singularities, and $F_2$ in two smooth points of $\Sigma$. Therefore, each elliptic fibrations $f_i$ has two fibers of type $\widetilde{D}_4$. The dual graph of the rational curves $R_1,\ldots,R_{12}$ arising from the singular fibers of $f_1,f_2$ is depicted in Figure~\ref{fig_dual_graph_LP2}.

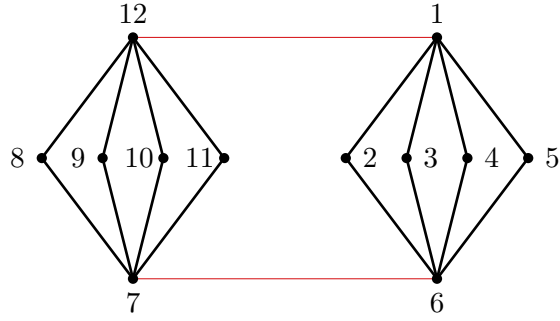
\begin{figure}
\begin{tikzpicture}[scale=0.8]

	\draw[color=ffqqqq] (-2.5,2) -- (2.5,2);
	\draw[color=ffqqqq] (-2.5,-2) -- (2.5,-2);

	\fill (-4,0) circle (2.4pt);
	\fill (-3,0) circle (2.4pt);
	\fill (-2,0) circle (2.4pt);
	\fill (-1,0) circle (2.4pt);
	\fill (1,0) circle (2.4pt);
	\fill (2,0) circle (2.4pt);
	\fill (3,0) circle (2.4pt);
	\fill (4,0) circle (2.4pt);
	\fill (-2.5,2) circle (2.4pt);
	\fill (-2.5,-2) circle (2.4pt);
	\fill (2.5,2) circle (2.4pt);
	\fill (2.5,-2) circle (2.4pt);

	\draw[line width=1.0pt] (-2.5,2) -- (-4,0);
	\draw[line width=1.0pt] (-2.5,2) -- (-3,0);
	\draw[line width=1.0pt] (-2.5,2) -- (-2,0);
	\draw[line width=1.0pt] (-2.5,2) -- (-1,0);
	\draw[line width=1.0pt] (-2.5,-2) -- (-1,0);
	\draw[line width=1.0pt] (-2.5,-2) -- (-2,0);
	\draw[line width=1.0pt] (-2.5,-2) -- (-3,0);
	\draw[line width=1.0pt] (-2.5,-2) -- (-4,0);

	\draw[line width=1.0pt] (2.5,2) -- (4,0);
	\draw[line width=1.0pt] (2.5,2) -- (3,0);
	\draw[line width=1.0pt] (2.5,2) -- (2,0);
	\draw[line width=1.0pt] (2.5,2) -- (1,0);
	\draw[line width=1.0pt] (2.5,-2) -- (1,0);
	\draw[line width=1.0pt] (2.5,-2) -- (2,0);
	\draw[line width=1.0pt] (2.5,-2) -- (3,0);
	\draw[line width=1.0pt] (2.5,-2) -- (4,0);

	\node at (-4-0.4,0) {$8$};
	\node at (-3-0.4,0) {$9$};
	\node at (-2-0.4,0) {$10$};
	\node at (-1-0.4,0) {$11$};
	\node at (1+0.4,0) {$2$};
	\node at (2+0.4,0) {$3$};
	\node at (3+0.4,0) {$4$};
	\node at (4+0.4,0) {$5$};
	\node at (-2.5,2+0.4) {$12$};
	\node at (-2.5,-2-0.4) {$7$};
	\node at (2.5,2+0.4) {$1$};
	\node at (2.5,-2-0.4) {$6$};

\end{tikzpicture}
\caption{Dual graph of the rational curves $R_1,\ldots,R_{12}$ in \cite[Example~2]{MLP02}. The colored edges joining the vetices $1,12$ and $6,7$ indicate intersection 2 between the corresponding curves.}
\label{fig_dual_graph_LP2}
\end{figure}

\begin{proposition}
\label{prop:num_LP2}
For an Enriques surface $S$ as above, let $R_1,\ldots,R_{12}$ be the $12$ smooth rational curves as in Figure~\ref{fig_dual_graph_LP2}. Then the lattice $\Num(S)$ is generated by
\begin{gather*}
R_1,~R_2,~R_3,~R_4,~R_7,~R_8,\\
A=\frac{1}{2}(R_2+R_3+R_4+R_5),~B=\frac{1}{2}(R_1+R_2+R_3+R_6),~C=\frac{1}{2}(R_7+R_8+R_9+R_{12}),\\
\frac{1}{2}(R_1+R_2+R_5+R_8+R_{10})+\frac{1}{4}(R_2+R_3+R_4+R_5).
\end{gather*}
\end{proposition}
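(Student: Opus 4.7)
The argument parallels that of Proposition~\ref{prop:num_LP1}, following Remark~\ref{remark-how-to-compute-num}. The $\widetilde{D}_4$ configurations $2R_1+R_2+R_3+R_4+R_5$ and $2R_{12}+R_8+R_9+R_{10}+R_{11}$ visible in Figure~\ref{fig_dual_graph_LP2} are fibers by Lemma~\ref{possiblehalf-fibersincharacteristiczero}, hence divisible by $2$ in $\Num(S)$; this immediately gives $A\in\Num(S)$.

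The elements $B$ and $C$ are halves of the $\widetilde{A}_3$ 4-cycles $R_1+R_2+R_3+R_6$ and $R_7+R_8+R_9+R_{12}$. To see that they lie in $\Num(S)$, we mimic the contradiction strategy used in Proposition~\ref{prop:num_LP1}: assuming all the $\widetilde{A}_3$ 4-cycles on the relevant side of the graph are half-fibers, we apply Lemma~\ref{tooltodistinguishfiberfromhalf-fiber} on the universal K3 cover to force the preimages of several such configurations to be connected; propagating this connectivity forces some larger configuration to have disconnected preimage, hence to be a fiber, which then intersects another fiber class in a number not divisible by $4$ --- a contradiction. Thus at least one of these 4-cycles must be a fiber, and after relabeling we can put $R_1+R_2+R_3+R_6$ and $R_7+R_8+R_9+R_{12}$ into this class, yielding $B,C\in\Num(S)$.

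The fourth element equals $\tfrac{1}{2}(R_1+R_2+R_5+R_8+R_{10})+\tfrac{1}{2}A$. To verify it lies in $\Num(S)$, we invoke Remark~\ref{remark-how-to-compute-num} directly: let $L$ denote the rank-$10$ sublattice of $\Num(S)$ spanned by $R_1,\ldots,R_{12}$ together with the classes $A,B,C$ produced above. We compute the discriminant group $L^*/L$ via the Smith normal form, exactly as in Lemma~\ref{num-of-D16-Enriques}, and list its nonzero isotropic classes. A further fiber/half-fiber check via the K3 cover singles out the representative compatible with $\Num(S)$, which turns out to be the fourth element in the statement.

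Finally, a direct computation shows that the $10\times 10$ intersection matrix of the proposed generators has determinant $\pm 1$, forcing them to generate the unimodular lattice $\Num(S)\cong U\oplus E_8$. The main obstacle throughout is the recurring need to distinguish fibers from half-fibers among the many $\widetilde{A}_3$ configurations in Figure~\ref{fig_dual_graph_LP2}, which we settle via the K3 cover argument and the divisibility-by-$4$ constraint on intersections of fiber classes.
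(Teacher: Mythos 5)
Your proposal tracks the paper's proof step for step: the $\widetilde{D}_4$ configurations give $A\in\Num(S)$, a connectivity argument on the K3 cover gives $B,C\in\Num(S)$ up to relabeling, and the last generator is produced as a nonzero isotropic class of the discriminant group of the rank-$10$ sublattice $L$ spanned by the curves and $A,B,C$, exactly as in Remark~\ref{remark-how-to-compute-num} and Lemma~\ref{num-of-D16-Enriques}, followed by the determinant check. The one place where your sketch, read literally, would not go through is the contradiction establishing $B$ and $C$: you transplant the mechanism of Proposition~\ref{prop:num_LP1} (``some larger configuration is forced to be a fiber, which then meets another fiber in a number not divisible by $4$''), but on either half of Figure~\ref{fig_dual_graph_LP2} there is no larger elliptic configuration available for this purpose --- the only bigger diagrams are the $\widetilde{D}_4$'s, which are already known to be fibers, so no contradiction arises that way. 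The paper's contradiction is more direct and stays within the $\widetilde{A}_3$ family: if the $4$-cycles $R_1+R_2+R_3+R_6$ and $R_1+R_2+R_4+R_6$ both had connected preimage in the K3 cover (Lemma~\ref{tooltodistinguishfiberfromhalf-fiber}), the monodromy would force $R_1+R_3+R_4+R_6$ to have disconnected preimage, contradicting the assumption that \emph{all} $\widetilde{A}_3$ cycles on that side are half-fibers. With that substitution (and the analogous one on the left-hand side), your argument coincides with the paper's.
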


\begin{proof}
The elliptic configurations of type $\widetilde{D}_4$ on $S$ guarantee that $R_2+R_3+R_4+R_5$ and $R_8+R_9+R_{10}+R_{11}$ are elements of $2\Num(S)$. We can determine more elliptic configurations in $2\Num(S)$ as follows. Consider the elliptic configurations of type $\widetilde{A}_3$ on the right-hand side of Figure~\ref{fig_dual_graph_LP2}, and assume by contradiction that these are all half-fibers. Then, by Lemma~\ref{tooltodistinguishfiberfromhalf-fiber}, the preimages of $R_1+R_2+R_3+R_6$ and $R_1+R_2+R_4+R_6$ are connected. This forces the preimage of $R_1+R_3+R_4+R_6$ to be disconnected, which is a contradiction. As there exist elliptic configurations of type $\widetilde{A}_3$ on the right-hand side of Figure~\ref{fig_dual_graph_LP2}, we can assume up to relabeling that $R_1+R_2+R_3+R_6\in2\Num(S)$. An analogous argument for the elliptic configurations of type $\widetilde{A}_3$ on the left-hand side of Figure~\ref{fig_dual_graph_LP2} yields $R_7+R_8+R_9+R_{12}\in2\Num(S)$.

Now, define $L\subseteq\Num(S)$ to be the rank $10$ sublattice with basis given by
\[
R_1,~R_2,~R_3,~R_4,~R_7,~R_8,~R_{10},~A,~B,~C.
\]
The discriminant group of $L$ is $\mathbb{Z}_2^2$, so $L\subsetneq\Num(S)$ and we look for an element in $\Num(S)\setminus L$ by studying the isotropic elements in $L^*/L$. Using the same strategy as in the proof of Lemma~\ref{num-of-D16-Enriques}, we find that the isotropic vectors in $L^*/L$ are the classes of
\begin{align*}
\frac{1}{2}(R_1+R_2+R_5+R_8+R_{10})&+\frac{1}{4}(R_2+R_3+R_4+R_5),\\
\frac{1}{2}(R_1+R_3+R_5+R_8+R_{10})&+\frac{1}{4}(R_2+R_3+R_4+R_5).
\end{align*}
These cannot simultaneously be in $\Num(S)$, but one of them must be. So, up to relabeling $R_2,R_3$ we fix that the first one is in $\Num(S)$. Adding this vector to the generating set of $L$ and dropping $R_{10}$, which is now redundant, gives the claimed basis.
\end{proof}

\begin{proposition}
Let $S$ be an Enriques surface as in \S\,\ref{ex-2-MLP} and let $\mathcal{R}$ be the configuration of $12$ smooth rational curves on $S$ as in Figure~\ref{fig_dual_graph_LP2}. The elliptic fibrations in $\mathcal{E}(S,\mathcal{R})$ are
\[
1 \times (2\widetilde{A}_1^{\hf}), 4 \times (\widetilde{A}_3^{\f}), 8 \times (\widetilde{A}_3^{\hf}), 2 \times (2\widetilde{D}_4^{\f}).
\]

We have that $\cnd(S,\mathcal{R})=5$, and therefore $\nd(S)\geq5$. An explicit isotropic sequence realizing $\cnd(S,\mathcal{R})=5$ is given by the numerical equivalence classes of:
\begin{align*}
\frac{1}{2}(R_{1}+R_{2}+R_{3}+R_{6}) \qquad &(\widetilde{A}_3^{\f}) \ \ \qquad \ \
\frac{1}{2}(R_{1}+R_{4}+R_{5}+R_{6}) \qquad &(\widetilde{A}_3^{\f})\\
\frac{1}{2}(R_{7}+R_{8}+R_{9}+R_{12}) \qquad &(\widetilde{A}_3^{\f}) \ \ \qquad \ \
\frac{1}{2}(R_{7}+R_{10}+R_{11}+R_{12}) \qquad &(\widetilde{A}_3^{\f})\\
R_{1}+R_{12} \qquad &(\widetilde{A}_1^{\hf}).
\end{align*}
\end{proposition}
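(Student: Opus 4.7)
The approach is to run the SageMath routine \texttt{CndFinder} from Section~\ref{section-describing-code} on the input data that has already been assembled in the preceding results: the intersection matrix of $\mathcal{R}=\{R_1,\ldots,R_{12}\}$ encoded by Figure~\ref{fig_dual_graph_LP2} (with weight $2$ on the edges $R_1R_{12}$ and $R_6R_7$, and weight $1$ elsewhere), together with the basis of $\Num(S)$ furnished by Proposition~\ref{prop:num_LP2}. Once $\cnd(S,\mathcal{R})=5$ is established, the inequality $\nd(S)\geq 5$ is immediate from the proposition that $\cnd(G,\Num(S))\leq \nd(S)$.

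In more detail, step 1 of the algorithm enumerates the elliptic configurations supported on $\mathcal{R}$ grouped by Dynkin type; here only types $\widetilde{A}_1$, $\widetilde{A}_3$, and $\widetilde{D}_4$ appear. Step 2 then uses the basis of $\Num(S)$ to separate fibers from half-fibers via divisibility by $2$ (note that the $\widetilde{D}_4$ configurations must be fibers by Lemma~\ref{possiblehalf-fibersincharacteristiczero}, so those cases are automatic). Steps 3--4 collapse pairs of configurations of the same type with zero mutual intersection into a single pencil, producing the claimed list
\[
1\times(2\widetilde{A}_1^{\hf}),\ 4\times(\widetilde{A}_3^{\f}),\ 8\times(\widetilde{A}_3^{\hf}),\ 2\times(2\widetilde{D}_4^{\f}).
\]
Step 5 builds recursively all ordered isotropic sequences of half-fiber classes, and step 6 retains only the $\mathcal{R}$-saturated ones; the longest such sequences have length $5$, giving $\cnd(S,\mathcal{R})=5$.

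Finally, the displayed sequence of five classes is verified directly: each class lies in $\mathsf{HF}(S,\mathcal{R})$ by the combination of Proposition~\ref{prop:num_LP2} (to see primitivity) and Lemma~\ref{ellipticconfigurationgivesgenusonepencil}, and the ten pairwise intersections are routine calculations which all return $1$. This explicit check provides an independent confirmation of the lower bound $\cnd(S,\mathcal{R})\geq 5$ without appealing to the computer.

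The principal obstacle is essentially clerical rather than mathematical: one must transcribe the weighted dual graph of Figure~\ref{fig_dual_graph_LP2} correctly, in particular the two double edges $R_1R_{12}$ and $R_6R_7$, and express the basis of Proposition~\ref{prop:num_LP2} in the format required by \texttt{CndFinder}. Once the input is correctly specified, the correctness of the output relies on the deterministic procedure of Section~\ref{section-describing-code}.
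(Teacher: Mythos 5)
Your proposal is correct and coincides with the paper's (implicit) proof: the proposition is obtained by running \texttt{CndFinder} on the intersection matrix of Figure~\ref{fig_dual_graph_LP2} together with the basis of $\Num(S)$ from Proposition~\ref{prop:num_LP2}, and you have correctly identified the two weight-$2$ edges $R_1R_{12}$ and $R_6R_7$ as the only delicate point in the input. The additional hand verification of the displayed length-$5$ sequence is a sound, if optional, cross-check of the lower bound.
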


\begin{remark}
The only other isotropic sequences realizing $\cnd(S,\mathcal{R})=5$ are obtained by replacing $R_{1}+R_{12}$ with either $\frac{1}{2}(2R_1+R_2+R_3+R_4+R_5)$ or $\frac{1}{2}(2R_6+R_2+R_3+R_4+R_5)$. There is another type of $\mathcal{R}$-saturated sequences which has length $3$ and has type $2 \times (\widetilde{A}_3^\f), (\widetilde{A}_3^\hf)$.
\end{remark}

%---------------------------------------------------------------------------------

\section{Enriques surfaces with finite automorphism group, revisited}
\label{sec:ndKondoExamples}

In this section, we revisit the Enriques surfaces with finite automorphism group. These were classified in \cite{Kon86} into seven types, and their non-degeneracy invariants were computed in \cite{DK22}. Our code re-computes these non-degeneracy invariants and provides additional geometric information as outlined in the introduction. We work over $\mathbb{C}$. For the realizability of these examples in positive characteristic we refer to the discussion in \cite{Mar19}.

We will not review the constructions of Kond\=o's examples because we only need the (finite) dual graphs of all smooth rational curves $\sR$ one these surfaces. We recall these graphs in \S\,\ref{sec:config-smooth-rat-curves}. For each Enriques surface $S$ with finite automorphism group, we provide a basis $\mathcal{B}$ of $\Num(S)$ using $\mathbb{Q}$-linear combinations of elements in $\mathcal{R}$. Afterwards, we run our computer code with $\mathcal{R}$ and $\mathcal{B}$ to compute $\cnd(S,\mathcal{R})$, $\mathcal{E}(S,\mathcal{R})$, and the $\mathcal{R}$-saturated sequences. As all the half-fibers are supported on $\mathcal{R}$ by \cite{Kon86}, this recovers $\nd(S)$ and all the elliptic fibrations, and computes the saturated sequences.

%-------------------------------------------

\subsection{Bases for the lattices \texorpdfstring{$\Num(S)$}{Lg}}

\begin{lemma}
Let $S$ be an Enriques surface with finite automorphism group and consider the configuration of smooth rational curves on $S$ in the corresponding figure in \S\,\ref{sec:config-smooth-rat-curves}. Then, for each type, a basis for $\Num(S)$ is given by the numerical classes of the curves in Table~\ref{tbl:all-bases-Num-Enriques-finite-aut}.
\end{lemma}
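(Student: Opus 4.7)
The plan is to apply the strategy of Remark~\ref{remark-how-to-compute-num} separately to each of the seven Kond\=o types. For each type, I would first fix a candidate rank $10$ sublattice $L \subseteq \Num(S)$ generated by $\mathbb{Q}$-linear combinations of curves in the dual graph $\mathcal{R}$ drawn in the corresponding figure of \S\,\ref{sec:config-smooth-rat-curves}. A natural starting point is to pick ten curves in $\mathcal{R}$ whose Gram matrix is non-degenerate; this yields a finite-index sublattice of $\Num(S)$, and the task becomes identifying the extra $\mathbb{Q}$-classes supported on $\mathcal{R}$ that are needed to generate the full group.

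The extra generators arise from elliptic configurations $C$ supported on $\mathcal{R}$ whose class is divisible by $2$ in $\Num(S)$, so that $\tfrac{1}{2}[C]$ lies in $\Num(S)$. Two tools are available to detect this divisibility. First, by Lemma~\ref{possiblehalf-fibersincharacteristiczero}, any elliptic configuration with dual graph $\widetilde{D}_n$ or $\widetilde{E}_n$ must be a fiber (not a half-fiber), hence its class is automatically twice an element of $\Num(S)$. Second, for configurations of type $\widetilde{A}_n$ one can apply Lemma~\ref{tooltodistinguishfiberfromhalf-fiber} on the universal K3 cover, or use parity arguments of the type used in the proofs of Propositions~\ref{prop:num_LP1} and \ref{prop:num_LP2}: if a cluster of candidate $\widetilde{A}_n$ half-fibers would force intersection numbers incompatible with those of a nearby fiber, at least one of them must itself be a fiber.

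Once a candidate set $\mathcal{B}=\{v_1,\ldots,v_{10}\}$ has been assembled out of rational curves and the half-classes of fiber-type configurations, the claim that $\mathcal{B}$ is a basis of $\Num(S)$ reduces to a unimodularity check: compute the Gram matrix $(v_i\cdot v_j)$ and verify that its determinant is $\pm 1$. Since $\Num(S)\cong U\oplus E_8$ is unimodular, any rank $10$ unimodular sublattice coincides with $\Num(S)$ itself.

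The main obstacle is essentially combinatorial and specific to each type: for the larger graphs (notably types VI and VII) there are many $\widetilde{D}_n$- and $\widetilde{E}_n$-type configurations to enumerate, and one must select a mixture of curves and half-classes whose Gram determinant is a unit. For a few $\widetilde{A}_n$-type configurations one may also need to combine the parity argument above with Lemma~\ref{tooltodistinguishfiberfromhalf-fiber} to decide whether the associated class is $[C]$ or $\tfrac{1}{2}[C]$. Once these geometric choices are made, the unimodularity verification is a routine Gram-determinant computation (easily performed in SageMath) for each of the seven bases listed in Table~\ref{tbl:all-bases-Num-Enriques-finite-aut}, which completes the proof type by type.
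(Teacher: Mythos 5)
Your proposal is correct and follows essentially the same route as the paper: the strategy of Remark~\ref{remark-how-to-compute-num}, Lemma~\ref{possiblehalf-fibersincharacteristiczero} to handle the $\widetilde{D}_n$/$\widetilde{E}_n$-type configurations, the K3-cover/parity arguments of Lemma~\ref{tooltodistinguishfiberfromhalf-fiber} and Proposition~\ref{prop:num_LP1} for the $\widetilde{A}_n$-type ones, and a final Gram-determinant check against unimodularity of $\Num(S)$. The only (minor) differences are that for types II, IV and VII the paper shortcuts the fiber/half-fiber determination by citing explicit propositions of Dolgachev--Kond\=o, and for type III it takes some extra care that the ``up to relabeling'' step in the parity argument is compatible with the remaining basis elements via a symmetry of the dual graph.
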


\begin{proof}
We first need to verify that for each type, the $\mathbb{Q}$-cycles listed in the second column of Table~\ref{tbl:all-bases-Num-Enriques-finite-aut} are actual elements of $\Num(S)$. This is immediate for type VI. In type I, we have that $A,B,C$ are elements of $\Num(S)$ because $2A,2B,2C$ are elliptic configurations with dual graphs $\widetilde{D}_8,\widetilde{D}_8,\widetilde{E}_7$ respectively, which cannot be half-fibers by Lemma~\ref{possiblehalf-fibersincharacteristiczero}. A similar argument applies in type V. In type IV we have that $R_3+R_4+R_{13}+R_{16}+R_{19}$ is a fiber by \cite[Proposition~8.9.16]{DK22}. In type VII, all the elliptic configurations with dual graph $\widetilde{A}_4$ are fibers by \cite[Proposition~8.9.28]{DK22}, so $R_1+R_2+R_3+R_4+R_{15}$ is divisible by $2$ in $\Num(S)$.

For type II, $2A$ is an elliptic configuration with dual graph $\widetilde{D}_5$, hence $A\in\Num(S)$. Consider the arrangements of nine curves among $R_1,\ldots,R_{12}$ whose dual graph is $\widetilde{A}_8$. By \cite[Proposition~8.9.9]{DK22} we know that among these eight possible configurations, four are fibers and the other four are half-fibers. So, up to relabeling, we can assume that
\[R_1+R_2+R_3+R_5+R_6+R_7+R_9+R_{10}+R_{11}\]
is a fiber, hence $B\in\Num(S)$.

For type III, the subgraph $\Gamma$ induced by the vertices $R_1, \ldots, R_{12}$ is isomorphic to the graph in  Figure~\ref{fig_dual_graph_LP1}. We fix the following bijection between the curves in Figure~\ref{fig_dual_graph_LP1} and Figure~\ref{fig_dual_graph_K3}:
\begin{center}
\begin{tabular}{|c|c|c|c|c|c|c|c|c|c|c|c|c|}
\hline
\cite{DK22}, Type III (Figure~\ref{fig_dual_graph_K3}) &                        1 & 2 & 3 & 4 & 5 & 6 & 7 & 8 & 9 & 10 & 11 & 12 \\
\hline
\cite{MLP02}, Example I (Figure~\ref{fig_dual_graph_LP1}) & 1 & 2 & 4 & 6 & 7 & 9 & 10 & 11 & 3 & 12 & 5 & 8 \\
\hline
\end{tabular}
\end{center}
Moreover, the group of symmetries of the diagram in Figure~\ref{fig_dual_graph_K3} is isomorphic to that of $\Gamma$ \cite[\S\,8.9]{DK22}, and the transposition $(R_2 \ \ R_9)$ on $\Gamma$ corresponds to the product of transpositions
\[
\sigma\coloneqq(R_2 \ \ R_9)(R_{15} \ \ R_{16})(R_{19} \ \ R_{20}).
\]
Then, the same argument as that of Proposition~\ref{prop:num_LP1} applies: the only subtlety is the choice of $C$ up to a relabeling of $R_2$ and $R_9$, which corresponds to a choice between $C$ and $\sigma(C)$. Since $\sigma$ does not affect any other element in Table~\ref{tbl:all-bases-Num-Enriques-finite-aut}, type III, we can choose $C\in 2\Num(S)$.

To conclude the proof, it is enough to check for each type that the determinant of the $10\times10$ intersection matrix associated with the corresponding $10$ curves is equal to $\pm1$.
\end{proof}

\begin{table}
\centering\renewcommand\cellalign{lc}
\setcellgapes{3pt}\makegapedcells
\caption{Bases of $\Num(S)$ for each type of Enriques surface with finite automorphism group. The labeling of the curves refers to that of the figures in \S\,\ref{sec:config-smooth-rat-curves}.}
\label{tbl:all-bases-Num-Enriques-finite-aut}
\begin{tabular}{|c|c|}
\hline
Type & Basis of $\Num(S)$
\\
\hline
I
&
\makecell{
$R_1,~R_2,~R_3,~R_4,~R_5,~R_6,~R_7,$
\\[1ex]
$A=\frac{1}{2}(2R_1+R_2+R_4+2R_5+2R_6+2R_7+2R_8+R_9+R_{12}),$
\\[1ex]
$B=\frac{1}{2}(2R_1+2R_2+2R_3+2R_4+2R_5+R_6+R_8+R_9+R_{12}),$
\\[1ex]
$C=\frac{1}{2}(4R_1+3R_2+2R_3+R_4+R_6+2R_7+3R_8+2R_9).$
}
\\
\hline
II
&
\makecell{
$R_1,~R_2,~R_3,~R_4,~R_5,~R_7,~R_9,~R_{10},$
\\[1ex]
$A=\frac{1}{2}(R_2+2R_3+R_4+2R_5+R_6+R_8),$
\\[1ex]
$B=\frac{1}{2}(R_1+R_2+R_3+R_5+R_6+R_7+R_9+R_{10}+R_{11}).$
}
\\
\hline
III
&
\makecell{
$R_1,~R_2,~R_3,~R_5,~R_6,~R_9,~R_{11},$
\\[1ex]
$A=\frac{1}{2}(R_2+R_{4}+R_9+R_{11}),~B=\frac{1}{2}(R_2+R_8+R_9+R_{10}),$
\\[1ex]
$C=\frac{1}{2}(R_1+R_2+R_3+R_5+R_7+R_8+R_{11}+R_{12}).$
}
\\
\hline
IV
&
\makecell{
$R_1,~R_2,~R_3,~R_5,~R_6,~R_9,~R_{11},~R_{13},~R_{19},$
\\[1ex]
$\frac{1}{2}(R_3+R_4+R_{13}+R_{16}+R_{19}).$
}
\\
\hline
V
&
\makecell{
$R_1,~R_2,~R_3,~R_4,~R_5,~R_7,~R_{9},~R_{17},$
\\[1ex]
$A=\frac{1}{2}(2R_1+R_2+2R_4+4R_5+3R_6+3R_7+2R_8+R_9),$
\\[1ex]
$B=\frac{1}{2}(R_1+R_3+2R_4+3R_5+2R_6+2R_7+R_8).$
}
\\
\hline
VI
&
\makecell{
$R_1,~R_2,~R_3,~R_4,~R_5,~R_7,~R_{11},~R_{12},~R_{14},~R_{17}.$
}
\\
\hline
VII
&
\makecell{
$R_1,~R_2,~R_3,~R_4,~R_5,~R_6,~R_7,~R_9,~R_{11},$
\\[1ex]
$\frac{1}{2}(R_1+R_2+R_3+R_4+R_{15}).$
}
\\
\hline
\end{tabular}
\end{table}

%-------------------------------------------

\subsection{Output of the code: isotropic sequences}
\label{sec:info-from-the-program}

The next proposition follows by running our code with $\mathcal{R}$ and the bases of $\Num(S)$ given in Table~\ref{tbl:all-bases-Num-Enriques-finite-aut}.

\begin{proposition}
Let $S$ be the Enriques surface with finite automorphism group. Then, for each type, Table~\ref{tbl:isotropic-sequences-realizing-cnd(S,R)} gives an isotropic sequence realizing $\nd(S)$, together with the number of non-degenerate isotropic sequences of length $\nd(S)$. For the labeling of the curves, we refer to the figures in \S\,\ref{sec:config-smooth-rat-curves}.
\end{proposition}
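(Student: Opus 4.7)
The plan is to verify the proposition by direct computation with the \texttt{CndFinder} code described in \S\,\ref{section-describing-code}. For each Kond\=o type I--VII, the inputs are already in hand: the intersection matrix of the configuration $\sR$ of all smooth rational curves on $S$ (read off from the dual graphs in \S\,\ref{sec:config-smooth-rat-curves}) and a $\mathbb{Z}$-basis of $\Num(S)$ expressed as $\mathbb{Q}$-linear combinations of curves in $\sR$ (given by Table~\ref{tbl:all-bases-Num-Enriques-finite-aut}). So the proof essentially reduces to feeding each of the seven pairs $(\sR,\mathcal{B})$ into \texttt{CndFinder} and reading off the longest sequence in the output, together with the count of all isotropic sequences of that maximal length.

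The key point that makes the output of the code equal to $\nd(S)$, rather than just a lower bound, is the following. By \cite{Kon86}, for an Enriques surface $S$ with finite automorphism group the set $\sR(S)$ of smooth rational curves is finite and every half-fiber of every elliptic fibration on $S$ is supported on $\sR(S)$. Consequently $\mathsf{HF}(S,\sR)$ coincides with the set of all numerical classes of half-fibers of $S$, so $\sE(S,\sR)$ contains every elliptic fibration, and by the remark following Definition~\ref{defn:ndinvariant} we have $\cnd(S,\sR)=\nd(S)$. In particular, the longest isotropic sequences found by the code are exactly the longest non-degenerate isotropic sequences in $\Num(S)$, and their count matches the one claimed in Table~\ref{tbl:isotropic-sequences-realizing-cnd(S,R)}.

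With this in place, step 1 of the algorithm enumerates the subsets of $\sR$ whose intersection matrix matches that of an extended Dynkin diagram of type $\widetilde A_{1\le n\le 8}$, $\widetilde D_{4\le n\le 8}$, or $\widetilde E_{6,7,8}$ (cf. Remark~\ref{rmk:rankOfAnDn}); step 2 uses the basis $\mathcal B$ via strategy (2) of Remark~\ref{rmk:catFiberHalfFiber} to decide, for each such configuration $C$, whether $C$ or $\tfrac{1}{2}C$ is the primitive class of a half-fiber; step 3 collapses classes with $[C_i]=[C_j]$; and steps 4--6 run the recursive enumeration of non-degenerate isotropic sequences. The outputs of these runs, read directly, produce the explicit sequences of Table~\ref{tbl:isotropic-sequences-realizing-cnd(S,R)} and the corresponding cardinalities.

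The main obstacle is not really mathematical but organisational: one has to make sure the labelling of curves in the input matches the figures of \S\,\ref{sec:config-smooth-rat-curves}, and that the basis choices (which involve implicit relabellings, e.g.\ the one between Figure~\ref{fig_dual_graph_LP1} and Figure~\ref{fig_dual_graph_K3} used in the proof of the preceding lemma for type III, or the choices made in types I, II, V and VII to guarantee that a particular $\widetilde A_n$ or $\widetilde D_n$ configuration is a fiber rather than a half-fiber) are compatible with the geometry. Once these conventions are fixed, the rest of the proof is an inspection of the output produced by \texttt{CndFinder}, and agreement with the values of $\nd(S)$ listed in \cite[\S\,8.9]{DK22} serves as a consistency check.
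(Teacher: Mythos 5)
Your proposal is correct and follows essentially the same route as the paper: the paper's proof is precisely the observation that the proposition follows by running \texttt{CndFinder} on each of the seven pairs $(\sR,\mathcal{B})$ from \S\,\ref{sec:config-smooth-rat-curves} and Table~\ref{tbl:all-bases-Num-Enriques-finite-aut}, with the identity $\cnd(S,\sR)=\nd(S)$ justified exactly as you do, namely that by \cite{Kon86} the set $\mathsf{HF}(S,\sR)$ contains all classes of half-fibers for these surfaces. Your additional remarks on labelling conventions and the consistency check against \cite[\S\,8.9]{DK22} match the surrounding discussion in the paper.
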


\begin{table}
\centering\renewcommand\cellalign{lc}
\setcellgapes{3pt}\makegapedcells
\caption{Examples of isotropic sequences realizing $\nd(S)$ for the Enriques surfaces with finite automorphism group. The third column reports the number of non-degenerate isotropic sequences of length $\nd(S)$. For each isotropic class $[C]$, in bold we give the dual graph of the elliptic configuration $C$ or $2C$.}
\label{tbl:isotropic-sequences-realizing-cnd(S,R)}
\begin{tabular}{|c|c|c|c|}
\hline
Type
&
$\nd$
&
$\#$
&
Example of isotropic sequence
\\
\hline
I
&
4
&
2
&
\makecell{
$\frac{1}{2}(R_{10}+R_{11})~\pmb{ (\widetilde{A}_1^\f)},~R_9+R_{10}~\pmb{(\widetilde{A}_1^\hf)},$
\\[1ex]
$\frac{1}{2}(R_1+R_{2}+R_{4}+R_{5}+R_{6}+R_{7}+R_{8}+R_{9}+R_{12})~\pmb{(\widetilde{D}_8^\f)},$
\\[1ex]
$\frac{1}{2}(R_1+R_{2}+R_{3}+R_{4}+R_{5}+R_{6}+R_{8}+R_{9}+R_{12})~\pmb{(\widetilde{D}_8^\f)}.$
}
\\
\hline
II
&
7
&
1
&
\makecell{
$R_1+R_2+R_3+R_4~\pmb{(\widetilde{A}_3^\hf)},~R_5+R_6+R_7+R_8~\pmb{(\widetilde{A}_3^\hf)},$
\\[1ex]
$R_9+R_{10}+R_{11}+R_{12}~\pmb{(\widetilde{A}_3^\hf)},$
\\[1ex]
$\frac{1}{2}(R_1+R_2+R_3+R_5+R_6+R_7+R_9+R_{10}+R_{11})~\pmb{(\widetilde{A}_8^\f)},$
\\[1ex]
$\frac{1}{2}(R_1+R_2+R_3+R_5+R_7+R_8+R_9+R_{11}+R_{12})~\pmb{(\widetilde{A}_8^\f)},$
\\[1ex]
$\frac{1}{2}(R_1+R_3+R_4+R_5+R_6+R_7+R_9+R_{11}+R_{12})~\pmb{(\widetilde{A}_8^\f)},$
\\[1ex]
$\frac{1}{2}(R_1+R_3+R_4+R_5+R_7+R_8+R_9+R_{10}+R_{11})~\pmb{(\widetilde{A}_8^\f)}.$
}
\\
\hline
III
&
8
&
8
&
\makecell{
$R_1+R_2+R_3+R_9~\pmb{(\widetilde{A}_3^\hf)},\frac{1}{2}(R_2 + 2R_3 + R_4 + R_9 + R_{11})~\pmb{(\widetilde{D}_4^\f)},$
\\[1ex]
$\frac{1}{2}(2R_1 + R_2 + R_8 + R_9 + R_{10})~\pmb{(\widetilde{D}_4^\f)},$
\\[1ex]
$\frac{1}{2}(R_1 + R_2+ R_3+ R_4+ R_5+ R_6+ R_7+ R_8)~\pmb{(\widetilde{A}_7^\f)},$
\\[1ex]
$\frac{1}{2}(R_1 + R_2 + R_3 + R_4 + R_5 + R_7 + R_{10} + R_{12})~\pmb{(\widetilde{A}_7^\f)},$
\\[1ex]
$\frac{1}{2}(R_1 + R_3 + R_4 + R_5 + R_7 + R_8 + R_9 + R_{12})~\pmb{(\widetilde{A}_7^\f)},$
\\[1ex]
$\frac{1}{2}(R_1 + R_3 + R_4 + R_5 + R_6 + R_7 + R_9 + R_{10})~\pmb{(\widetilde{A}_7^\f)},$
\\[1ex]
$\frac{1}{2}(R_2 + 2R_3 + 2R_4 + 2R_5 + R_6 + R_9 + R_{12})~\pmb{(\widetilde{D}_6^\f)}.$
}
\\
\hline
IV
&
10
&
16
&
\makecell{
$R_1+R_{11}~\pmb{(\widetilde{A}_1^\hf)},~R_2+R_{10}~\pmb{(\widetilde{A}_1^\hf)},~R_5+R_{15}~\pmb{(\widetilde{A}_1^\hf)},$
\\[1ex]
$R_6+R_{13}~\pmb{(\widetilde{A}_1^\hf)},~R_{17}+R_{19}~\pmb{(\widetilde{A}_1^\hf)},~\frac{1}{2}(R_1 + R_2+ R_{14}+ R_{15}+ R_{19})~\pmb{(\widetilde{A}_4^\f)},$
\\[1ex]
$\frac{1}{2}(R_1 + R_2+ R_{13}+ R_{15}+ R_{20})~\pmb{(\widetilde{A}_4^\f)},~\frac{1}{2}(R_1 + R_2+ R_{13}+ R_{16}+ R_{19})~\pmb{(\widetilde{A}_4^\f)},$
\\[1ex]
$\frac{1}{2}(R_1 + R_4+ R_{13}+ R_{15}+ R_{19})~\pmb{(\widetilde{A}_4^\f)},~\frac{1}{2}(R_2 + R_3+ R_{13}+ R_{15}+ R_{19})~\pmb{(\widetilde{A}_4^\f)}.$
}
\\
\hline
V
&
7
&
20
&
\makecell{
$\frac{1}{2}(R_{11}+R_{12})~\pmb{(\widetilde{A}_1^\f)},~\frac{1}{2}(R_{11}+R_{13})~\pmb{(\widetilde{A}_1^\f)},~\frac{1}{2}(R_{11}+R_{20})~\pmb{(\widetilde{A}_1^\f)},$
\\[1ex]
$\frac{1}{2}(R_{14}+R_{17})~\pmb{(\widetilde{A}_1^\f)},~\frac{1}{2}(R_{15}+R_{18})~\pmb{(\widetilde{A}_1^\f)},~\frac{1}{2}(R_{16}+R_{19})~\pmb{(\widetilde{A}_1^\f)},$
\\[1ex]
$R_{8}+R_{11}~\pmb{(\widetilde{A}_1^\hf)}.$
}
\\
\hline
VI
&
10
&
1
&
\makecell{
$R_{1}+R_{20}~\pmb{(\widetilde{A}_1^\hf)},~R_{2}+R_{12}~\pmb{(\widetilde{A}_1^\hf)},~R_{3}+R_{17}~\pmb{(\widetilde{A}_1^\hf)},$
\\[1ex]
$R_{4}+R_{18}~\pmb{(\widetilde{A}_1^\hf)},~R_{5}+R_{13}~\pmb{(\widetilde{A}_1^\hf)},~R_{6}+R_{19}~\pmb{(\widetilde{A}_1^\hf)},$
\\[1ex]
$R_{7}+R_{14}~\pmb{(\widetilde{A}_1^\hf)},~R_{8}+R_{11}~\pmb{(\widetilde{A}_1^\hf)},~R_{9}+R_{15}~\pmb{(\widetilde{A}_1^\hf)},~R_{10}+R_{16}~\pmb{(\widetilde{A}_1^\hf)}.$
}
\\
\hline
VII
&
10
&
5
&
\makecell{
$\frac{1}{2}(R_{16}+R_{17})~\pmb{(\widetilde{A}_1^\f)},~\frac{1}{2}(R_{16}+R_{18})~\pmb{(\widetilde{A}_1^\f)},$
\\[1ex]
$\frac{1}{2}(R_{16}+R_{19})~\pmb{(\widetilde{A}_1^\f)},~\frac{1}{2}(R_{16}+R_{20})~\pmb{(\widetilde{A}_1^\f)},$
\\[1ex]
$\frac{1}{2}(R_{1}+R_{2}+R_3+R_4+R_{15})~\pmb{(\widetilde{A}_4^\f)},~\frac{1}{2}(R_{1}+R_{2}+R_9+R_{10}+R_{12})~\pmb{(\widetilde{A}_4^\f)},$
\\[1ex]
$\frac{1}{2}(R_{1}+R_{7}+R_8+R_9+R_{14})~\pmb{(\widetilde{A}_4^\f)},~\frac{1}{2}(R_{1}+R_{5}+R_6+R_{14}+R_{15})~\pmb{(\widetilde{A}_4^\f)},$
\\[1ex]
$\frac{1}{2}(R_{2}+R_{3}+R_7+R_{13}+R_{14})~\pmb{(\widetilde{A}_4^\f)},~\frac{1}{2}(R_{2}+R_{6}+R_{10}+R_{11}+R_{14})~\pmb{(\widetilde{A}_4^\f)}.$
}
\\
\hline
\end{tabular}
\end{table}

%-------------------------------------------

\subsection{Geometric considerations from the output data}
\label{sec:data-on-elliptic-fibrations}

%----------------------

We report some geometric considerations based on the data output of the code. This complements the data of \cite[\S\,8.9]{DK22}. In particular, the saturated sequences of each example are collected in Tables~\ref{Tab:sat-sequences-on-Kondo-I}, \ref{Tab:sat-sequences-on-Kondo-II}, \ref{Tab:sat-sequences-on-Kondo-III}, \ref{Tab:sat-sequences-on-Kondo-IV}, \ref{Tab:sat-sequences-on-Kondo-V}, \ref{Tab:sat-sequences-on-Kondo-VI}, and \ref{Tab:sat-sequences-on-Kondo-VII}.

\subsubsection{Type I}
The Enriques surface $S$ has the following elliptic fibrations (this agrees with \cite[Proposition~8.9.6]{DK22}):
\[
1 \times (\widetilde{A}_1^{\f} + \widetilde{A}_7^{\hf}),
2 \times (\widetilde{A}_1^{\hf} + \widetilde{E}_7^{\f}),
2 \times (\widetilde{D}_8^{\f}),
4 \times (\widetilde{E}_8^{\f}).
\]
The unique fibration of type $\widetilde{A}_1^{\f} + \widetilde{A}_7^{\hf}$ is $(1/2(R_{10}+R_{11}),R_1+R_2+R_3+R_4+R_5+R_6+R_7+R_8)$. The two fibrations of type $\widetilde{A}_1^{\hf} + \widetilde{E}_7^{\f}$ are $(R_9+R_{10},1/2(R_2+2R_3+3R_4+4R_5+3R_6+2R_7+1R_8+2R_{12}))$ and $(R_{11}+R_{12},1/2(4R_1+3R_2+2R_3+1R_4+R_6+2R_7+3R_8+2R_9))$.

\begin{table}[h!]
\centering\renewcommand\cellalign{lc}
\setcellgapes{3pt}\makegapedcells
\caption{Saturated sequences on the Enriques surface \cite[(3.1)~Example~I]{Kon86}.}
\label{Tab:sat-sequences-on-Kondo-I}
\begin{tabular}{|c|c|c|}
\hline
Length
&
Fibrations in the sequence
&
Cardinality
\\
\hline
$4$
&
$(\widetilde{A}_1^{\f} + \widetilde{A}_7^{\hf}), (\widetilde{A}_1^{\hf} + \widetilde{E}_7^{\f}), 2 \times (\widetilde{D}_8^{\f})$
&
$2$
\\
\hline
$3$
&
$(\widetilde{A}_1^{\hf} + \widetilde{E}_7^{\f}), (\widetilde{D}_8^{\f}), (\widetilde{E}_8^{\f})$
&
$4$
\\
\hline
\end{tabular}
\end{table}

%----------------------

\subsubsection{Type II}

We first recover that $S$ has the following elliptic fibrations, agreeing with \cite[Proposition~8.9.9]{DK22}:
\[
4 \times (\widetilde{A}_8^{\hf}),
4 \times (\widetilde{A}_8^{\f}),
6 \times (\widetilde{D}_8^{\f}),
3 \times (\widetilde{A}_3^{\hf}+\widetilde{D}_5^{\f}).
\]
The three fibrations of type $\widetilde{A}_3^{\hf}+\widetilde{D}_5^{\f}$ are $(R_1+R_2+R_3+R_4,1/2(R_6+2R_7+R_8+2R_9+R_{10}+R_{12}))$, $(R_5+R_6+R_7+R_8,1/2(2R_1+R_2+R_4+R_{10}+2R_{11}+R_{12}))$ and $(R_9+R_{10}+R_{11}+R_{12},1/2(R_2+2R_3+R_4+2R_{5}+R_{6}+R_{8}))$.

\begin{table}[h!]
\centering\renewcommand\cellalign{lc}
\setcellgapes{3pt}\makegapedcells
\caption{Saturated sequences on the Enriques surface \cite[(3.2)~Example~II]{Kon86}.}
\label{Tab:sat-sequences-on-Kondo-II}
\begin{tabular}{|c|c|c|}
\hline
Length
&
Fibrations in the sequence
&
Cardinality
\\
\hline
$7$
&
$3 \times (\widetilde{A}_3^{\hf} + \widetilde{D}_5^{\f}), 4 \times (\widetilde{A}_8^{\f})$
&
$1$
\\
\hline
$5$
&
$2 \times (\widetilde{A}_3^{\hf} + \widetilde{D}_5^{\f}), 2 \times (\widetilde{A}_8^{\f}), (\widetilde{D}_8^{\f})$
&
$6$
\\
\hline
$4$
&
$3 \times (\widetilde{A}_8^{\f}), (\widetilde{A}_8^{\hf})$
&
$4$
\\
\hline
\end{tabular}
\end{table}

%----------------------

\subsubsection{Type III}

As computed in \cite[Proposition~8.9.13]{DK22}, $S$ has the following elliptic fibrations:
\[
8 \times (\widetilde{A}_1^{\hf}+\widetilde{A}_7^{\hf}),
8 \times (\widetilde{A}_1^{\f}+\widetilde{A}_7^{\f}),
16 \times (\widetilde{D}_8^{\f}),
2 \times (2\widetilde{D}_4^{\f}),
8 \times (2\widetilde{A}_1^{\hf}+\widetilde{D}_6^{\f}),
2 \times (2\widetilde{A}_1^{\f}+2\widetilde{A}_3^{\hf}). 
\]
The two fibrations of type $(2\widetilde{A}_1^{\f}+2\widetilde{A}_3^{\hf})$ are $(1/2(R_{13}+R_{17}),1/2(R_{14}+R_{18}),R_{1}+R_{2}+R_{3}+R_{9},R_{5}+R_{6}+R_{7}+R_{12})$ and
$(1/2(R_{15}+R_{19}),1/2(R_{16}+R_{20}),R_{1}+R_{7}+R_{8}+R_{10},R_{3}+R_{4}+R_{5}+R_{11})$. 
The eight fibrations of type $(\widetilde{A}_1^{\f}+\widetilde{A}_7^{\f})$ and the eight fibrations of type $(\widetilde{A}_1^{\hf}+\widetilde{A}_7^{\hf})$ are given by a choice of one of the blue edges in Figure~\ref{fig_dual_graph_K3}, together with a suitable $\widetilde{A}_7^{\f}$ or $\widetilde{A}_7^{\hf}$. 
The eight fibrations of type $(2\widetilde{A}_1^{\hf}+\widetilde{D}_6^{\f})$ are given by the following pairs of $\widetilde{A}_1^{\hf}$ together with a suitable $\widetilde{D}_6^{\f}$:
$(R_{2}+R_{15},R_{9}+R_{16})$, $(R_{2}+R_{20},R_{9}+R_{19})$, $(R_{4}+R_{17},R_{11}+R_{14})$, $(R_{4}+R_{18},R_{11}+R_{13})$, $(R_{6}+R_{19},R_{12}+R_{16})$, $(R_{6}+R_{20},R_{12}+R_{15})$, $(R_{8}+R_{13},R_{10}+R_{14})$, $(R_{8}+R_{18},R_{10}+R_{17})$.

\begin{table}[h!]
\centering\renewcommand\cellalign{lc}
\setcellgapes{3pt}\makegapedcells
\caption{Saturated sequences on the Enriques surface \cite[(3.3)~Example~III]{Kon86}.}
\label{Tab:sat-sequences-on-Kondo-III}
\begin{tabular}{|c|c|c|}
\hline
Length
&
Fibrations in the sequence
&
Cardinality
\\
\hline
$8$
&
$4\times(\widetilde{A}_1^{\f} + \widetilde{A}_7^{\f}), (2 \widetilde{A}_1^{\f} + 2 \widetilde{A}_3^{\hf}), (2 \widetilde{A}_1^{\hf} + \widetilde{D}_6^{\f}), 2\times(2 \widetilde{D}_4^{\f})$
&
$8$
\\
\hline
$7$
&
$4 \times (\widetilde{A}_1^{\f} + \widetilde{A}_7^{\f}),  (2 \widetilde{A}_1^{\f} + 2 \widetilde{A}_3^{\hf}),  2 \times (2 \widetilde{D}_4^{\f})$
&
$24$
\\
\hline
$5$
&
$4 \times (\widetilde{A}_1^{\f} + \widetilde{A}_7^{\f}),  (\widetilde{A}_1^{\hf} + \widetilde{A}_7^{\hf})$
&
$8$
\\
\hline
$5$
&
$2 \times (\widetilde{A}_1^{\f} + \widetilde{A}_7^{\f}),  (2 \widetilde{A}_1^{\hf} + \widetilde{D}_6^{\f}),  (2 \widetilde{D}_4^{\f}),  (\widetilde{D}_8^{\f})$
&
$32$
\\
\hline
\end{tabular}
\end{table}

%----------------------

\subsubsection{Type IV}

$S$ has the following elliptic fibrations (this agrees with \cite[Proposition~8.9.19]{DK22}):
$$10 \times (2\widetilde{D}_4^{\f}),
40 \times (\widetilde{A}_3^{\hf}+\widetilde{D}_5^{\f}),
16 \times (2\widetilde{A}_4^{\f}),
16 \times (2\widetilde{A}_4^{\hf}),
5 \times (2\widetilde{A}_1^{\hf} + 2\widetilde{A}_3^{\f}).$$
The five fibrations of type $(2\widetilde{A}_1^{\hf}+2\widetilde{A}_3^{\f})$ are $(R_2+R_{10}, R_4+R_9, 1/2(R_{5}+R_{7}+R_{11}+R_{12}),1/2(R_{13}+R_{14}+R_{19}+R_{20}))$, $(R_{5}+R_{15}, R_{7}+R_{16}, 1/2(R_{9}+R_{10}+R_{17}+R_{18}),1/2(R_{1}+R_{3}+R_{13}+R_{14}))$, $(R_{17}+R_{19}, R_{18}+R_{20}, 1/2(R_{5}+R_{6}+R_{7}+R_{8}), 1/2(R_{1}+R_{2}+R_{3}+R_{4}))$, $(R_{6}+R_{13}, R_{8}+R_{14}, 1/2(R_{11}+R_{17}+R_{12}+R_{18}), 1/2(R_{2}+R_{15}+R_{4}+R_{16}))$, $(R_{1}+R_{11}, R_{3}+R_{12}, 1/2(R_{6}+R_{9}+R_{8}+R_{10}), 1/2(R_{15}+R_{19}+R_{16}+R_{20}))$. There are in total $64$ diagrams of type $\widetilde{A}_4$, $32$ of them are fibers and $32$ are half-fibers. In the notation of \cite{DK22}, they can be listed by choosing an element in $$\{R_1,R_3\} \times \{R_2,R_4\} \times \{R_{15},R_{16}\} \times \{R_{20},R_{19}\} \times \{R_{13},R_{14}\},$$
or an element in 
$$\{R_{11},R_{12}\} \times \{R_9,R_{10}\} \times \{R_{5},R_{7}\} \times \{R_{17},R_{18}\} \times \{R_{6},R_{8}\}.$$
Using the basis of $\Num(S)$ in Table~\ref{tbl:all-bases-Num-Enriques-finite-aut} it is possible to check which one of them is a fiber and which an half-fiber.

\begin{table}[h!]
\centering\renewcommand\cellalign{lc}
\setcellgapes{3pt}\makegapedcells
\caption{Saturated sequences on the Enriques surface \cite[(3.4)~Example~IV]{Kon86}.}
\label{Tab:sat-sequences-on-Kondo-IV}
\begin{tabular}{|c|c|c|}
\hline
Length
&
Fibrations in the sequence
&
Cardinality
\\
\hline
$10$
&
$5 \times (2 \widetilde{A}_1^{\hf} + 2 \widetilde{A}_3^{\f}),  5 \times (2 \widetilde{A}_4^{\f})$
&
$16$
\\
\hline
$9$
&
$5 \times (2 \widetilde{A}_1^{\hf} + 2 \widetilde{A}_3^{\f}),  4 \times (2 \widetilde{A}_4^{\f})$
&
$40$
\\
\hline
$9$
&
$4 \times (2 \widetilde{A}_1^{\hf} + 2 \widetilde{A}_3^{\f}),  4 \times (2 \widetilde{A}_4^{\f}),  (2 \widetilde{D}_4^{\f})$
&
$160$
\\
\hline
$8$
&
$2 \times (2 \widetilde{A}_1^{\hf} + 2 \widetilde{A}_3^{\f}),  (\widetilde{A}_3^{\hf} + \widetilde{D}_5^{\f}),  4 \times (2 \widetilde{A}_4^{\f}),  (2 \widetilde{D}_4^{\f})$
&
$80$
\\
\hline
$6$
&
$5 \times (2 \widetilde{A}_4^{\f}),  (2 \widetilde{A}_4^{\hf})$
&
$16$
\\
\hline
\end{tabular}
\end{table}

%----------------------

\subsubsection{Type V}

$S$ has the following elliptic fibrations (this agrees with \cite[Proposition~8.9.23]{DK22}):
$$4 \times (\widetilde{A}_1^{\hf} + \widetilde{A}_2^{\f} + \widetilde{A}_5^{\hf} ),
12 \times (\widetilde{A}_1^{\hf}+\widetilde{E}_7^{\f}),
6 \times (\widetilde{A}_1^{\f}+\widetilde{A}_1^{\hf}+\widetilde{D}_6^{\f}),
3 \times (\widetilde{A}_1^{\f}+\widetilde{A}_7^{\f}),
4 \times (\widetilde{A}_2^{\f}+\widetilde{E}_6^{\f}).$$
The four fibrations of type $(\widetilde{A}_1^{\hf} + \widetilde{A}_2^{\f} + \widetilde{A}_5^{\hf})$ are determined by a choice of a $\widetilde{A}_1^{\hf}$, given by a vertex of the tethrahedron $\{R_{11},R_{12},R_{13},R_{20}\}$ and the adjacent curve in $\{R_{1},R_{5},R_{6},R_{8}\}$. As an example we have
$(R_{1}+R_{12},1/2(R_{15}+R_{16}+R_{17}),R_{3}+R_{4}+R_5+R_7+R_8+R_9)$.

The six fibrations of type $(\widetilde{A}_1^{\f}+\widetilde{A}_1^{\hf}+\widetilde{D}_6^{\f})$ are determined by a choice of a $\widetilde{A}_1^{\f}$ being one of the red edges of the tethrahedron $\{R_{11},R_{12},R_{13},R_{20}\}$. As an example we have $(1/2(R_{11}+R_{12}),R_{10}+R_{15},1/2(R_{2}+2R_{3}+2R_4+2R_5+R_6+R_7+R_9))$.

The three fibrations of type $(\widetilde{A}_1^{\f}+\widetilde{A}_7^{\f})$ are determined by a choice of a $\widetilde{A}_1^{\f}$ being a diagonal of the octahedron $\{R_{14},R_{15},R_{16},R_{17},R_{18},R_{19}\}$. As an example we have $(1/2(R_{14}+R_{17}),1/2(R_{1}+R_{2}+R_3+R_4+R_5+R_7+R_8+R_{10}))$.

\begin{table}[h!]
\centering\renewcommand\cellalign{lc}
\setcellgapes{3pt}\makegapedcells
\caption{Saturated sequences on the Enriques surface \cite[(3.5)~Example~V]{Kon86}.}
\label{Tab:sat-sequences-on-Kondo-V}
\begin{tabular}{|c|c|c|}
\hline
Length
&
Fibrations in the sequence
&
Cardinality
\\
\hline
$7$
&
$3 \times (\widetilde{A}_1^{\f} + \widetilde{A}_1^{\hf} + \widetilde{D}_6^{\f}),  3 \times (\widetilde{A}_1^{\f} + \widetilde{A}_7^{\f}),  (\widetilde{A}_1^{\hf} + \widetilde{A}_2^{\f} + \widetilde{A}_5^{\hf})$
&
$4$
\\
\hline
$7$
&
$3 \times (\widetilde{A}_1^{\f} + \widetilde{A}_1^{\hf} + \widetilde{D}_6^{\f}),  3 \times (\widetilde{A}_1^{\f} + \widetilde{A}_7^{\f}),  (\widetilde{A}_2^{\f} + \widetilde{E}_6^{\f})$
&
$4$
\\
\hline
$7$
&
$2 \times (\widetilde{A}_1^{\f} + \widetilde{A}_1^{\hf} + \widetilde{D}_6^{\f}),  3 \times (\widetilde{A}_1^{\f} + \widetilde{A}_7^{\f}),  (\widetilde{A}_1^{\hf} + \widetilde{A}_2^{\f} + \widetilde{A}_5^{\hf}),  (\widetilde{A}_2^{\f} + \widetilde{E}_6^{\f})$
&
$12$
\\
\hline
$5$
&
$2 \times (\widetilde{A}_1^{\f} + \widetilde{A}_1^{\hf} + \widetilde{D}_6^{\f}),  (\widetilde{A}_1^{\f} + \widetilde{A}_7^{\f}),  (\widetilde{A}_1^{\hf} + \widetilde{E}_7^{\f}),  (\widetilde{A}_2^{\f} + \widetilde{E}_6^{\f})$
&
$12$
\\
\hline
\end{tabular}
\end{table}

%----------------------

\subsubsection{Type VI}

$S$ has the following elliptic fibrations (this agrees with \cite[Proposition~8.9.27]{DK22}):
$$
12 \times (\widetilde{A}_4^{\f} + \widetilde{A}_4^{\hf}),
10 \times (\widetilde{A}_1^{\hf}+\widetilde{A}_2^{\f} + \widetilde{A}_5^{\f}),
15 \times (\widetilde{A}_3^{\f}+\widetilde{D}_5^{\f}),
20 \times (\widetilde{A}_2^{\hf}+\widetilde{E}_6^{\f}).$$
The subgraph of Figure~\ref{fig_dual_graph_K6} induced by the rational curves $R_1,\ldots,R_{10}$ is a \textit{Petersen graph}, which implies that $\nd(S)=10$ (we direct the interested reader to Example~6.4.19 and \S\,8.9 of \cite{DK22}). Observe that the half-fibers listed above are numerically equivalent to fibers of type $\widetilde{A}_5$ divided by $2$ supported on the Petersen graph. For example, $R_1+R_{20}\equiv1/2(R_3+R_4+R_5+R_7+R_8+R_9)$. In fact, our computation shows that there is no other sequence of isotropic nef classes realizing $\nd(S)=10$. Equivalently, $S$ admits a unique ample Fano polarization.

\begin{table}[h!]
\centering\renewcommand\cellalign{lc}
\setcellgapes{3pt}\makegapedcells
\caption{Saturated sequences on the Enriques surface \cite[(3.6)~Example~VI]{Kon86}.}
\label{Tab:sat-sequences-on-Kondo-VI}
\begin{tabular}{|c|c|c|}
\hline
Length
&
Fibrations in the sequence
&
Cardinality
\\
\hline
$10$
&
$10 \times (\widetilde{A}_1^{\hf} + \widetilde{A}_2^{\f} + \widetilde{A}_5^{\f})$
&
$1$
\\
\hline
$9$
&
$8 \times (\widetilde{A}_1^{\hf} + \widetilde{A}_2^{\f} + \widetilde{A}_5^{\f}),  (\widetilde{A}_3^{\f} + \widetilde{D}_5^{\f})$
&
$15$
\\
\hline
$9$
&
$7 \times (\widetilde{A}_1^{\hf} + \widetilde{A}_2^{\f} + \widetilde{A}_5^{\f}),  2 \times (\widetilde{A}_3^{\f} + \widetilde{D}_5^{\f})$
&
$30$
\\
\hline
$9$
&
$6 \times (\widetilde{A}_1^{\hf} + \widetilde{A}_2^{\f} + \widetilde{A}_5^{\f}),  3 \times (\widetilde{A}_3^{\f} + \widetilde{D}_5^{\f})$
&
$10$
\\
\hline
$8$
&
$5 \times (\widetilde{A}_1^{\hf} + \widetilde{A}_2^{\f} + \widetilde{A}_5^{\f}),  2 \times (\widetilde{A}_3^{\f} + \widetilde{D}_5^{\f}),  (\widetilde{A}_4^{\f} + \widetilde{A}_4^{\hf})$
&
$60$
\\
\hline
$7$
&
$3 \times (\widetilde{A}_1^{\hf} + \widetilde{A}_2^{\f} + \widetilde{A}_5^{\f}),  (\widetilde{A}_2^{\hf} + \widetilde{E}_6^{\f}),  3 \times (\widetilde{A}_3^{\f} + \widetilde{D}_5^{\f})$
&
$20$
\\
\hline
\end{tabular}
\end{table}

%----------------------

\subsubsection{Type VII}

As also computed in \cite[Proposition~8.9.28]{DK22}, $S$ has the following elliptic fibrations:
$$20 \times (\widetilde{A}_8^{\f}),
15 \times (\widetilde{A}_1^{\hf}+\widetilde{A}_7^{\f}),
6 \times (2\widetilde{A}_4^{\f}),
10 \times (\widetilde{A}_1^{\f}+\widetilde{A}_2^{\hf}+\widetilde{A}_5^{\f}).
$$

\begin{table}[h!]
\centering\renewcommand\cellalign{lc}
\setcellgapes{3pt}\makegapedcells
\caption{Saturated sequences on the Enriques surface \cite[(3.7)~Example~VII]{Kon86}.}
\label{Tab:sat-sequences-on-Kondo-VII}
\begin{tabular}{|c|c|c|}
\hline
Length
&
Fibrations in the sequence
&
Cardinality
\\
\hline
$10$
&
$4 \times (\widetilde{A}_1^{\f} + \widetilde{A}_2^{\hf} + \widetilde{A}_5^{\f}), 6 \times (2 \widetilde{A}_4^{\f})$
&
$5$
\\
\hline
$9$
&
$4 \times (\widetilde{A}_1^{\f} + \widetilde{A}_2^{\hf} + \widetilde{A}_5^{\f}), (\widetilde{A}_1^{\hf} + \widetilde{A}_7^{\f}), 4 \times (2 \widetilde{A}_4^{\f})$
&
$15$
\\
\hline
$9$
&
$3 \times (\widetilde{A}_1^{\f} + \widetilde{A}_2^{\hf} + \widetilde{A}_5^{\f}), 6 \times (2 \widetilde{A}_4^{\f})$
&
$10$
\\
\hline
$7$
&
$3 \times (\widetilde{A}_1^{\f} + \widetilde{A}_2^{\hf} + \widetilde{A}_5^{\f}), 3 \times (2 \widetilde{A}_4^{\f}), (\widetilde{A}_8^{\f})$
&
$20$
\\
\hline
$7$
&
$2 \times (\widetilde{A}_1^{\f} + \widetilde{A}_2^{\hf} + \widetilde{A}_5^{\f}), (\widetilde{A}_1^{\hf} + \widetilde{A}_7^{\f}), 3 \times (2 \widetilde{A}_4^{\f}), (\widetilde{A}_8^{\f})$
&
$60$
\\
\hline
\end{tabular}
\end{table}

%-------------------------------------------

\subsection{Geometry of Fano models and Kuznetsov components}
\label{Geom-Fano-models-and-Kuznetsov-components}

With reference to \S\,\ref{ssec:SaturatedIsoSequences}, using the computational data produced in \S\,\ref{sec:data-on-elliptic-fibrations} we can exhibit explicit examples of non-isomorphic Fano models and Kuznetsov components. We specifically focus on the Enriques surfaces with finite automorphism group of type I and IV, but one can construct analogous examples in all types.

\begin{example}
\label{ex:nonEquivKuzComp}
Consider the Enriques surface with finite automorphism group of type IV. It follows from the data of Table~\ref{Tab:sat-sequences-on-Kondo-IV} and from the discussion at the end of \S\,\ref{ssec:SaturatedIsoSequences} that $S$ admits at least three non-isomorphic Fano models and three non-equivalent Kuznetsov components. These are obtained from sequences of length $10$, $8$, and $6$.
\end{example}

While we only give a simple example in this work, the problem of classifying Fano models and Kuznetsov components (and with them, canonical isotropic sequences) may provide interesting insights into the nature of $S$, and is left for future research.

Additionally, note that one can obtain non-isomorphic Fano models for an Enriques surface $S$ also by considering two different extensions to maximal canonical isotropic sequence of the same saturated non-degenerate sequence as the following example shows.

\begin{example}
\label{ex_Kondo1Extensions}
Consider an Enriques surface with finite automorphism group of type I. From Table~\ref{tbl:isotropic-sequences-realizing-cnd(S,R)} we know that $S$ admits a saturated isotropic sequence of length $4$ given by 
\begin{align*}
f_1&:=\frac{1}{2} [R_{10}+R_{11}], \ \ &
f_3:=\frac{1}{2}[2R_1+R_2+R_4+2R_5+2R_6+2R_7+2R_8+R_9+R_{12}],\\
f_2&:=[R_{9}+R_{10}], \ \ &
f_4:=\frac{1}{2}[2R_1+2R_2+2R_3+2R_4+2R_5+R_6+R_8+R_9+R_{12}].
\end{align*}
By Lemma~\ref{lem:ExtendingSequence}, $(f_1,f_2,f_3,f_4)$ can be extended to a canonical maximal isotropic sequence. A computer assisted inspection yields the following extensions of $(f_1, f_2, f_3, f_4)$ to canonical maximal isotropic sequences:
$$P\coloneqq (f_1, f_1+R_{12}, f_2, f_2+R_1, f_3, f_3+R_3,f_3+R_3+R_4, f_4, f_4+R_7,f_4+R_6+R_7),$$
$$Q\coloneqq (f_1, f_1+R_{9}, f_2, f_3, f_3+R_3,f_3+R_2+R_3, f_4, f_4+R_7,f_4+R_6+R_7,f_4+R_5+R_6+R_7),$$
where, for simplicity of notation, we identified the rational curves $R_i$ with their class in $\Num(S)$. Observe that the two sequences $P,Q$ define non-isomorphic Fano models $S_P$ and $S_Q$. In fact, $S_P$ has 4 singular points, two of type $A_1$ and two of type $A_2$, obtained by contracting the curves $R_1,R_3,R_4,R_6,R_7,R_{12}$. The Fano model $S_Q$ has three singular points of type $A_1$, $A_2$, and $A_3$, obtained contracting $R_2,R_3,R_5,R_6,R_7,R_9$.
\end{example}

%-------------------------------------------

\subsection{Configurations of smooth rational curves}
\label{sec:config-smooth-rat-curves}

Figures~\ref{fig_dual_graph_KonI}, \ref{fig_dual_graph_KonII}, \ref{fig_dual_graph_K3}, \ref{fig_dual_graph_K4}, \ref{fig_dual_graph_K5}, \ref{fig_dual_graph_K6}, and \ref{fig_dual_graph_K7} recollect the dual graphs of the smooth rational curves on the seven types of Enriques surfaces with finite automorphism group. In the figures, we adopt the following convention: a black (resp. colored) edge between two vertices indicates that the intersection of the corresponding curves equals $1$ (resp. $2$). For consistency of notation within this paper, the curves denoted by $E_i$ in \cite{DK22} will be denoted by $R_i$ instead. For the Enriques surface of type VII, the curves denoted by $K_i$, $i=1, \dots, 5$, in \cite{DK22}, will be denoted by $R_{i+15}$.

\newpage

\begin{figure}[H]
\begin{tikzpicture}
\draw  (0,0)-- (1,0);
\draw [color=ffqqqq] (1,0)-- (2,0);
\draw [color=ffqqqq] (2,0)-- (3,0);
\draw [color=ffqqqq] (3,0)-- (4,0);
\draw  (4,0)-- (5,0);
\draw  (0,0)-- (1,1);
\draw  (1,1)-- (2.5,2);
\draw  (2.5,2)-- (4,1);
\draw  (4,1)-- (5,0);
\draw  (5,0)-- (4,-1);
\draw  (4,-1)-- (2.5,-2);
\draw  (2.5,-2)-- (1,-1);
\draw  (1,-1)-- (0,0);
\fill [color=black] (0,0) circle (2.4pt);
\draw[color=black] (-0.3,0) node {$1$};
\fill [color=black] (1,0) circle (2.4pt);
\draw[color=black] (1.02,0.36) node {$9$};
\fill [color=black] (2,0) circle (2.4pt);
\draw[color=black] (2,0.36) node {$10$};
\fill [color=black] (3,0) circle (2.4pt);
\draw[color=black] (3,0.36) node {$11$};
\fill [color=black] (4,0) circle (2.4pt);
\draw[color=black] (4,0.36) node {$12$};
\fill [color=black] (5,0) circle (2.4pt);
\draw[color=black] (5.3,0) node {$5$};
\fill [color=black] (2.5,2) circle (2.4pt);
\draw[color=black] (2.5,2.29) node {$3$};
\fill [color=black] (2.5,-2) circle (2.4pt);
\draw[color=black] (2.48,-2.29) node {$7$};
\fill [color=black] (1,1) circle (2.4pt);
\draw[color=black] (0.81,1.31) node {$2$};
\fill [color=black] (4,1) circle (2.4pt);
\draw[color=black] (4.26,1.29) node {$4$};
\fill [color=black] (4,-1) circle (2.4pt);
\draw[color=black] (4.22,-1.16) node {$6$};
\fill [color=black] (1,-1) circle (2.4pt);
\draw[color=black] (0.79,-1.18) node {$8$};
\end{tikzpicture}
\caption{Configuration of $12$ smooth rational curves on the Enriques surface \cite[(3.2)~Example~I]{Kon86}.}
\label{fig_dual_graph_KonI}
\end{figure}

\begin{figure}[H]
\begin{tikzpicture}[scale=0.8]

	\draw (0,3) -- (1,2);
	\draw (0,3) -- (-1,2);
	\draw (0,1) -- (1,2);
	\draw (0,1) -- (-1,2);
	\draw (1,2) -- (2,1);
	\draw (2,1) -- (3,0);
	\draw (3,0) -- (2,-1);
	\draw (2,-1) -- (1,0);
	\draw (1,0) -- (2,1);
	\draw (2,-1) -- (-2,-1);
	\draw (-2,-1) -- (-3,0);
	\draw (-3,0) -- (-2,1);
	\draw (-2,1) -- (-1,0);
	\draw (-1,0) -- (-2,-1);
	\draw (-2,1) -- (-1,2);

	\fill (0,3) circle (2.74pt);
	\fill (0,1) circle (2.74pt);
	\fill (1,2) circle (2.74pt);
	\fill (-1,2) circle (2.74pt);
	\fill (2,1) circle (2.74pt);
	\fill (1,0) circle (2.74pt);
	\fill (3,0) circle (2.74pt);
	\fill (2,-1) circle (2.74pt);
	\fill (-2,-1) circle (2.74pt);
	\fill (-3,0) circle (2.74pt);
	\fill (-1,0) circle (2.74pt);
	\fill (-2,1) circle (2.74pt);

	\node at (-1-0.3,2+0.3) {$1$};
	\node at (0,3+0.4) {$2$};
	\node at (1+0.3,2+0.3) {$3$};
	\node at (0,1-0.4) {$4$};
	\node at (2+0.3,1+0.3) {$5$};
	\node at (3+0.4,0) {$6$};
	\node at (2,-1-0.4) {$7$};
	\node at (1-0.4,0) {$8$};
	\node at (-2,-1-0.4) {$9$};
	\node at (-3-0.4,0) {$10$};
	\node at (-2-0.3,1+0.3) {$11$};
	\node at (-1+0.4,0) {$12$};

\end{tikzpicture}
\caption{Configuration of $12$ smooth rational curves on the Enriques surface \cite[(3.2)~Example~II]{Kon86}.}
\label{fig_dual_graph_KonII}
\end{figure}

\begin{figure}[H]
\begin{tikzpicture}[scale=0.7]
\draw (-1,1)-- (-3,0);
\draw (-3,0)-- (-2,2);
\draw (-2,2)-- (0,3);
\draw (0,3)-- (-1,1);
\draw (0,3)-- (3,3);
\draw (0,3)-- (4,4);
\draw (3,3)-- (3,0);
\draw (4,4)-- (3,0);
\draw (-3,0)-- (-3,-3);
\draw (-3,0)-- (-4,-4);
\draw (-3,-3)-- (0,-3);
\draw (0,-3)-- (-4,-4);
\draw (2,-2)-- (0,-3);
\draw (0,-3)-- (1,-1);
\draw (1,-1)-- (3,0);
\draw (3,0)-- (2,-2);
\draw [color=ffqqqq] (-1,1)-- (2,1);
\draw [color=ffqqqq] (-1,1)-- (-1,-1);
\draw [color=ffqqqq] (-2,2)-- (1,2);
\draw [color=ffqqqq] (-2,2)-- (-2,-2);
\draw [color=ffqqqq] (-2,-2)-- (-1,-1);
\draw [color=ffqqqq] (2,1)-- (1,2);
\draw [color=ffqqqq] (2,1)-- (2,-2);
\draw [color=ffqqqq] (2,-2)-- (-2,-2);
\draw [color=ffqqqq] (-1,-1)-- (1,-1);
\draw [color=ffqqqq] (1,-1)-- (1,2);
\draw [color=ffqqqq] (-5,5)-- (-4,-4);
\draw [color=ffqqqq] (-5,5)-- (4,4);
\draw [color=ffqqqq] (-4,4)-- (-5,5);
\draw [color=ffqqqq] (-4,4)-- (-3,-3);
\draw [color=ffqqqq] (-4,4)-- (3,3);
\draw [color=ffqqqq] (6,-4)-- (5,-5);
\draw [color=ffqqqq] (5,-5)-- (-4,-4);
\draw [color=ffqqqq] (5,-5)-- (3,3);
\draw [color=ffqqqq] (6,-4)-- (-3,-3);
\draw [color=ffqqqq] (6,-4)-- (4,4);
\draw [color=cccccc] (-4,4)-- (1,2);
\draw [color=cccccc] (-4,4)-- (2,1);
\draw [color=cccccc] (-4,4)-- (-1,-1);
\draw [color=cccccc] (-4,4)-- (-2,-2);
\draw [color=cccccc] (-5,5)-- (1,2);
\draw [color=cccccc] (-5,5)-- (2,1);
\draw [color=cccccc] (-5,5)-- (-1,-1);
\draw [color=cccccc] (-5,5)-- (-2,-2);
\draw [color=cccccc] (6,-4)-- (1,2);
\draw [color=cccccc] (2,1)-- (6,-4);
\draw [color=cccccc] (6,-4)-- (-2,-2);
\draw [color=cccccc] (-2,-2)-- (5,-5);
\draw [color=cccccc] (5,-5)-- (-1,-1);
\draw [color=cccccc] (6,-4)-- (-1,-1);
\draw [color=cccccc] (5,-5)-- (2,1);
\draw [color=cccccc] (5,-5)-- (1,2);

\fill [color=black] (0,3) circle (2.74pt);
\draw[color=black] (0,3.36) node {$1$};
\fill [color=black] (-1,1) circle (2.74pt);
\draw[color=black] (-1.29,1.48) node {$10$};
\fill [color=black] (-2,2) circle (2.74pt);
\draw[color=black] (-2.22,2.37) node {$8$};
\fill [color=black] (-3,0) circle (2.74pt);
\draw[color=black] (-3.2,0.29) node {$7$};
\fill [color=black] (3,3) circle (2.74pt);
\draw[color=black] (3.23,3.27) node {$9$};
\fill [color=black] (4,4) circle (2.74pt);
\draw[color=black] (4.2,4.25) node {$2$};
\fill [color=black] (3,0) circle (2.74pt);
\draw[color=black] (3.33,-0.09) node {$3$};
\fill [color=black] (-3,-3) circle (2.74pt);
\draw[color=black] (-3.17,-3.3) node {$6$};
\fill [color=black] (-4,-4) circle (2.74pt);
\draw[color=black] (-4.16,-4.4) node {$12$};
\fill [color=black] (0,-3) circle (2.74pt);
\draw[color=black] (0.3,-3.15) node {$5$};
\fill [color=black] (2,-2) circle (2.74pt);
\draw[color=black] (2.43,-2.08) node {$11$};
\fill [color=black] (1,-1) circle (2.74pt);
\draw[color=black] (1.33,-1.07) node {$4$};
\fill [color=black] (1,2) circle (2.74pt);
\draw[color=black] (1.28,2.26) node {$18$};
\fill [color=black] (2,1) circle (2.74pt);
\draw[color=black] (2.39,1.4) node {$14$};
\fill [color=black] (-2,-2) circle (2.74pt);
\draw[color=black] (-2.16,-2.4) node {$13$};
\fill [color=black] (-1,-1) circle (2.74pt);
\draw[color=black] (-0.54,-0.53) node {$17$};
\fill [color=black] (5,-5) circle (2.74pt);
\draw[color=black] (5.5,-5) node {$16$};
\fill [color=black] (6,-4) circle (2.74pt);
\draw[color=black] (6.45,-4) node {$20$};
\fill [color=black] (-4,4) circle (2.74pt);
\draw[color=black] (-3.47,3.6) node {$19$};
\fill [color=black] (-5,5) circle (2.74pt);
\draw[color=black] (-5.4,5) node {$15$};
\end{tikzpicture}
\caption{Dual graph of the rational curves $R_1, \ldots, R_{20}$ in the Enriques surface of type III in \cite{DK22}. Every vertex in $\{R_{15},R_{16},R_{19},R_{20}\}$ is connected to every vertex in $\{R_{13},R_{14},R_{17},R_{18}\}$ via the blue edges.} 
\label{fig_dual_graph_K3}
\end{figure}

\begin{figure}
\begin{tikzpicture}
\draw [color=ffqqqq] (1.35,4.84)-- (1.15,2.67);
\draw [color=ffqqqq] (0.65,4.84)-- (0.85,2.67);
\draw [color=ffqqqq] (4.56,2.5)-- (2.57,1.64);
\draw [color=ffqqqq] (4.78,1.84)-- (2.66,1.36);
\draw [color=ffqqqq] (3.55,-1.94)-- (2.12,-0.31);
\draw [color=ffqqqq] (1.87,-0.48)-- (2.99,-2.35);
\draw [color=ffqqqq] (-0.99,-2.35)-- (0.13,-0.48);
\draw [color=ffqqqq] (-0.12,-0.31)-- (-1.55,-1.94);
\draw [color=ffqqqq] (-2.78,1.84)-- (-0.66,1.36);
\draw [color=ffqqqq] (-0.57,1.64)-- (-2.56,2.5);
\draw (1.35,4.84)-- (4.56,2.5);
\draw (0.65,4.84)-- (4.78,1.84);
\draw (1.35,4.84)-- (4.78,1.84);
\draw (4.56,2.5)-- (0.65,4.84);
\draw (0.65,4.84)-- (-2.56,2.5);
\draw (-2.56,2.5)-- (1.35,4.84);
\draw (1.35,4.84)-- (-2.78,1.84);
\draw (-2.78,1.84)-- (0.65,4.84);
\draw (-2.78,1.84)-- (-1.55,-1.94);
\draw (-1.55,-1.94)-- (-2.56,2.5);
\draw (-2.56,2.5)-- (-0.99,-2.35);
\draw (-0.99,-2.35)-- (-2.78,1.84);
\draw (-0.99,-2.35)-- (2.99,-2.35);
\draw (2.99,-2.35)-- (-1.55,-1.94);
\draw (-1.55,-1.94)-- (3.55,-1.94);
\draw (3.55,-1.94)-- (-0.99,-2.35);
\draw (3.55,-1.94)-- (4.78,1.84);
\draw (4.78,1.84)-- (2.99,-2.35);
\draw (2.99,-2.35)-- (4.56,2.5);
\draw (4.56,2.5)-- (3.55,-1.94);
\draw (1.15,2.67)-- (2.12,-0.31);
\draw (2.12,-0.31)-- (0.85,2.67);
\draw (0.85,2.67)-- (1.87,-0.48);
\draw (1.15,2.67)-- (1.87,-0.48);
\draw (0.85,2.67)-- (-0.12,-0.31);
\draw (-0.12,-0.31)-- (1.15,2.67);
\draw (1.15,2.67)-- (0.13,-0.48);
\draw (0.13,-0.48)-- (0.85,2.67);
\draw (0.13,-0.48)-- (2.66,1.36);
\draw (2.66,1.36)-- (-0.12,-0.31);
\draw (-0.12,-0.31)-- (2.57,1.64);
\draw (2.57,1.64)-- (0.13,-0.48);
\draw (2.57,1.64)-- (-0.57,1.64);
\draw (-0.57,1.64)-- (2.66,1.36);
\draw (2.66,1.36)-- (-0.66,1.36);
\draw (-0.66,1.36)-- (2.57,1.64);
\draw (-0.66,1.36)-- (1.87,-0.48);
\draw (1.87,-0.48)-- (-0.57,1.64);
\draw (-0.57,1.64)-- (2.12,-0.31);
\draw (2.12,-0.31)-- (-0.66,1.36);
\fill [color=black] (1.35,4.84) circle (1.9pt);
\draw[color=black] (1.56,5.2) node {$2$};
\fill [color=black] (1.15,2.67) circle (1.9pt);
\draw[color=black] (1.5,2.8) node {$10$};
\fill [color=black] (0.65,4.84) circle (1.9pt);
\draw[color=black] (0.43,5.2) node {$4$};
\fill [color=black] (0.85,2.67) circle (1.9pt);
\draw[color=black] (0.51,2.8) node {$9$};
\fill [color=black] (4.56,2.5) circle (1.9pt);
\draw[color=black] (4.9,2.7) node {$15$};
\fill [color=black] (2.57,1.64) circle (1.9pt);
\draw[color=black] (2.5,2) node {$5$};
\fill [color=black] (4.78,1.84) circle (1.9pt);
\draw[color=black] (5.2,1.87) node {$16$};
\fill [color=black] (2.66,1.36) circle (1.9pt);
\draw[color=black] (2.9,1.15) node {$7$};
\fill [color=black] (3.55,-1.94) circle (1.9pt);
\draw[color=black] (3.9,-2.17) node {$19$};
\fill [color=black] (2.12,-0.31) circle (1.9pt);
\draw[color=black] (2.59,-0.17) node {$17$};
\fill [color=black] (1.87,-0.48) circle (1.9pt);
\draw[color=black] (1.74,-0.83) node {$18$};
\fill [color=black] (2.99,-2.35) circle (1.9pt);
\draw[color=black] (3.3,-2.6) node {$20$};
\fill [color=black] (-0.99,-2.35) circle (1.9pt);
\draw[color=black] (-1.25,-2.6) node {$13$};
\fill [color=black] (0.13,-0.48) circle (1.9pt);
\draw[color=black] (0.33,-0.78) node {$6$};
\fill [color=black] (-0.12,-0.31) circle (1.9pt);
\draw[color=black] (-0.45,-0.11) node {$8$};
\fill [color=black] (-1.55,-1.94) circle (1.9pt);
\draw[color=black] (-1.85,-2.17) node {$14$};
\fill [color=black] (-2.78,1.84) circle (1.9pt);
\draw[color=black] (-3.1,1.84) node {$1$};
\fill [color=black] (-0.66,1.36) circle (1.9pt);
\draw[color=black] (-1,1.15) node {$11$};
\fill [color=black] (-0.57,1.64) circle (1.9pt);
\draw[color=black] (-0.5,2) node {$12$};
\fill [color=black] (-2.56,2.5) circle (1.9pt);
\draw[color=black] (-2.8,2.7) node {$3$};
\end{tikzpicture}
\caption{Dual graph of the rational curves $R_1, \ldots, R_{20}$ in the Enriques surface of type IV in \cite{DK22}.
} 
\label{fig_dual_graph_K4}
\end{figure}

\begin{figure}
\begin{tikzpicture}[scale=0.6]
\draw (14,4)-- (11,1);
\draw (11,1)-- (6,-4);
\draw (6,-4)-- (6,8);
\draw (6,8)-- (6,12);
\draw (6,12)-- (1,7);
\draw (1,7)-- (-2,4);
\draw (-2,4)-- (6,-6);
\draw (6,-6)-- (14,4);
\draw (14,4)-- (11,7);
\draw (11,7)-- (6,12);
\draw (6,-4)-- (1,1);
\draw (1,1)-- (-2,4);
\draw (5,2)-- (3.5,6.5);
\draw (3.5,6.5)-- (7,6);
\draw (7,6)-- (8.5,1.5);
\draw (8.5,1.5)-- (5,2);
\draw (3.5,6.5)-- (3.5,1.5);
\draw (3.5,1.5)-- (7,6);
\draw (5,2)-- (3.5,1.5);
\draw (3.5,1.5)-- (8.5,1.5);
\draw (5,2)-- (8.5,6.5);
\draw (8.5,6.5)-- (7,6);
\draw (8.5,6.5)-- (8.5,1.5);
\draw (8.5,6.5)-- (3.5,6.5);
\draw [color=cccccc] (6,-6)-- (5,2);
\draw [color=cccccc] (3.5,6.5)-- (11,1);
\draw [color=cccccc] (7,6)-- (6,8);
\draw [color=cccccc] (8.5,1.5)-- (1,7);
\draw [color=cccccc] (8.5,6.5)-- (1,1);
\draw [color=cccccc] (3.5,1.5)-- (11,7);
\draw [color=cccccc] (8.5,6.5)-- (3.5,1.5);
\draw [color=cccccc] (3.5,6.5)-- (8.5,1.5);
\draw [color=cccccc] (5,2)-- (7,6);
\draw [color=ffqqqq] (12,5)-- (7,-2);
\draw [color=ffqqqq] (7,-2)-- (5,10);
\draw [color=ffqqqq] (5,10)-- (0,3);
\draw [color=ffqqqq] (0,3)-- (12,5);
\draw [color=ffqqqq] (12,5)-- (5,10);
\draw [color=ffqqqq] (0,3)-- (7,-2);
\draw [color=cccccc] (14,4)-- (12,5);
\draw [color=cccccc] (12,5)-- (8.5,1.5);
\draw [color=cccccc] (12,5)-- (7,6);
\draw [color=cccccc] (0,3)-- (-2,4);
\draw [color=cccccc] (12,5)-- (8.5,6.5);
\draw [color=cccccc] (0,3)-- (7,6);
\draw [color=cccccc] (0,3)-- (3.5,6.5);
\draw [color=cccccc] (0,3)-- (3.5,1.5);
\draw [color=cccccc] (5,10)-- (6,12);
\draw [color=cccccc] (5,10)-- (8.5,6.5);
\draw [color=cccccc] (5,10)-- (3.5,6.5);
\draw [color=cccccc] (5,10)-- (5,2);
\draw [color=cccccc] (6,-4)-- (7,-2);
\draw [color=cccccc] (7,-2)-- (8.5,1.5);
\draw [color=cccccc] (7,-2)-- (5,2);
\draw [color=cccccc] (7,-2)-- (3.5,1.5);
\fill [color=black] (14,4) circle (3.2pt);
\draw[color=black] (14.36,4) node {$1$};
\fill [color=black] (11,1) circle (3.2pt);
\draw[color=black] (11,1.6) node {$2$};
\fill [color=black] (6,-4) circle (3.2pt);
\draw[color=black] (6.5,-4) node {$3$};
\fill [color=black] (6,8) circle (3.2pt);
\draw[color=black] (6.37,8.2) node {$9$};
\fill [color=black] (6,12) circle (3.2pt);
\draw[color=black] (6.4,12.3) node {$8$};
\fill [color=black] (1,7) circle (3.2pt);
\draw[color=black] (0.8,7.4) node {$7$};
\fill [color=black] (-2,4) circle (3.2pt);
\draw[color=black] (-2.4,4) node {$5$};
\fill [color=black] (6,-6) circle (3.2pt);
\draw[color=black] (6.5,-6) node {$6$};
\fill [color=black] (11,7) circle (3.2pt);
\draw[color=black] (11.5,7.4) node {{$10$}};
\fill [color=black] (1,1) circle (3.2pt);
\draw[color=black] (1.5,1) node {$4$};
\fill [color=black] (5,2) circle (3.2pt);
\draw[color=black] (5.5,2.2) node {{$17$}};
\fill [color=black] (3.5,6.5) circle (3.2pt);
\draw[color=black] (4.15,6.9) node {{$16$}};
\fill [color=black] (7,6) circle (3.2pt);
\draw[color=black] (7.1,6.32) node {{$14$}};
\fill [color=black] (8.5,1.5) circle (3.2pt);
\draw[color=black] (8.03,1.2) node {{$19$}};
\fill [color=black] (3.5,1.5) circle (3.2pt);
\draw[color=black] (3.2,1.1) node {{$15$}};
\fill [color=black] (8.5,6.5) circle (3.2pt);
\draw[color=black] (8.9,6.8) node {{$18$}};
\fill [color=black] (12,5) circle (3.2pt);
\draw[color=black] (12.1,4.55) node {{$12$}};
\fill [color=black] (7,-2) circle (3.2pt);
\draw[color=black] (7.5,-1.9) node {{$13$}};
\fill [color=black] (5,10) circle (3.2pt);
\draw[color=black] (5.5,10.2) node {{$11$}};
\fill [color=black] (0,3) circle (3.2pt);
\draw[color=black] (-0.2,3.6) node {{$20$}};
\end{tikzpicture}
\caption{Dual graph of the rational curves $R_1, \ldots, R_{20}$ in the Enriques surface of type V in \cite{DK22}.}
\label{fig_dual_graph_K5}
\end{figure}

\begin{figure}
\definecolor{uququq}{rgb}{0.1,0.1,0.1}
\begin{tikzpicture}[scale=0.2]
\draw  (2.5,7.55)-- (4.91,0.12);
\draw  (4.91,0.12)-- (-1.41,4.71);
\draw  (-1.41,4.71)-- (6.41,4.71);
\draw  (6.41,4.71)-- (0.09,0.12);
\draw  (0.09,0.12)-- (2.5,7.55);
\draw  (2.5,9.5)-- (8.26,5.31);
\draw  (8.26,5.31)-- (6.06,-1.46);
\draw  (6.06,-1.46)-- (-1.06,-1.46);
\draw  (-1.06,-1.46)-- (-3.26,5.31);
\draw  (-3.26,5.31)-- (2.5,9.5);
\draw  (2.5,9.5)-- (2.5,7.55);
\draw  (8.26,5.31)-- (6.41,4.71);
\draw  (6.06,-1.46)-- (4.91,0.12);
\draw  (-1.06,-1.46)-- (0.09,0.12);
\draw  (-3.26,5.31)-- (-1.41,4.71);
\draw  (2.5,24.93)-- (15.13,-13.94);
\draw  (15.13,-13.94)-- (-17.93,10.08);
\draw  (-17.93,10.08)-- (22.93,10.08);
\draw  (22.93,10.08)-- (-10.13,-13.94);
\draw  (-10.13,-13.94)-- (2.5,24.93);
\draw [color=ffqqqq] (2.5,24.93)-- (2.5,9.5);
\draw [color=ffqqqq] (22.93,10.08)-- (8.26,5.31);
\draw [color=ffqqqq] (15.13,-13.94)-- (6.06,-1.46);
\draw [color=ffqqqq] (-10.13,-13.94)-- (-1.06,-1.46);
\draw [color=ffqqqq] (-17.93,10.08)-- (-3.26,5.31);
\draw  (-6.94,16.44)-- (2.5,24.93);
\draw  (2.5,24.93)-- (11.94,16.44);
\draw  (11.94,16.44)-- (22.93,10.08);
\draw  (22.93,10.08)-- (17.78,-1.52);
\draw  (17.78,-1.52)-- (15.13,-13.94);
\draw  (15.13,-13.94)-- (2.5,-12.62);
\draw  (2.5,-12.62)-- (-10.13,-13.94);
\draw  (-10.13,-13.94)-- (-12.78,-1.52);
\draw  (-12.78,-1.52)-- (-17.93,10.08);
\draw  (-17.93,10.08)-- (-6.94,16.44);
\draw  (-6.94,16.44)-- (11.94,16.44);
\draw  (11.94,16.44)-- (17.78,-1.52);
\draw  (17.78,-1.52)-- (2.5,-12.62);
\draw  (2.5,-12.62)-- (-12.78,-1.52);
\draw  (-12.78,-1.52)-- (-6.94,16.44);
\draw [color=uququq] (-6.94,16.44)-- (-10.13,-13.94);
\draw [color=uququq] (-6.94,16.44)-- (22.93,10.08);
\draw [color=uququq] (11.94,16.44)-- (-17.93,10.08);
\draw [color=uququq] (11.94,16.44)-- (15.13,-13.94);
\draw [color=uququq] (17.78,-1.52)-- (2.5,24.93);
\draw [color=uququq] (17.78,-1.52)-- (-10.13,-13.94);
\draw [color=uququq] (2.5,-12.62)-- (22.93,10.08);
\draw [color=uququq] (2.5,-12.62)-- (-17.93,10.08);
\draw [color=uququq] (-12.78,-1.52)-- (2.5,24.93);
\draw [color=uququq] (-12.78,-1.52)-- (15.13,-13.94);
\draw [color=ffqqqq] (4.91,0.12)-- (-6.94,16.44);
\draw [color=ffqqqq] (11.94,16.44)-- (0.09,0.12);
\draw [color=ffqqqq] (17.78,-1.52)-- (-1.41,4.71);
\draw [color=ffqqqq] (2.5,-12.62)-- (2.5,7.55);
\draw [color=ffqqqq] (-12.78,-1.52)-- (6.41,4.71);
\fill [color=uququq] (2.5,7.55) circle (9.6pt);
\draw[color=uququq] (3.57,7.61) node {$1$};
\fill [color=uququq] (6.41,4.71) circle (9.6pt);
\draw[color=uququq] (7,3.9) node {$7$};
\fill [color=uququq] (4.91,0.12) circle (9.6pt);
\draw[color=uququq] (4,-0.7) node {$10$};
\fill [color=uququq] (0.09,0.12) circle (9.6pt);
\draw[color=uququq] (-0.8,0.6) node {$2$};
\fill [color=uququq] (-1.41,4.71) circle (9.6pt);
\draw[color=uququq] (-0.9,5.73) node {$8$};
\fill [color=uququq] (2.5,9.5) circle (9.6pt);
\draw[color=uququq] (3.7,9.4) node {$6$};
\fill [color=uququq] (8.26,5.31) circle (9.6pt);
\draw[color=uququq] (8.5,4) node {$5$};
\fill [color=uququq] (6.06,-1.46) circle (9.6pt);
\draw[color=uququq] (6.3,-2.9) node {$4$};
\fill [color=uququq] (-1.06,-1.46) circle (9.6pt);
\draw[color=uququq] (-2.4,-2) node {$3$};
\fill [color=uququq] (-3.26,5.31) circle (9.6pt);
\draw[color=uququq] (-4.4,6.6) node {$9$};
\fill [color=uququq] (2.5,24.93) circle (9.6pt);
\draw[color=uququq] (2.5,26) node {$19$};
\fill [color=uququq] (22.93,10.08) circle (9.6pt);
\draw[color=uququq] (24.03,11.1) node {$13$};
\fill [color=uququq] (15.13,-13.94) circle (9.6pt);
\draw[color=uququq] (16,-14.9) node {$18$};
\fill [color=uququq] (-10.13,-13.94) circle (9.6pt);
\draw[color=uququq] (-11.1,-14.9) node {$17$};
\fill [color=uququq] (-17.93,10.08) circle (9.6pt);
\draw[color=uququq] (-18.9,11.1) node {$15$};
\fill [color=uququq] (-6.94,16.44) circle (9.6pt);
\draw[color=uququq] (-8.17,17.77) node {$16$};
\fill [color=uququq] (11.94,16.44) circle (9.6pt);
\draw[color=uququq] (13.33,17.37) node {$12$};
\fill [color=uququq] (17.78,-1.52) circle (9.6pt);
\draw[color=uququq] (19.37,-1.8) node {$11$};
\fill [color=uququq] (2.5,-12.62) circle (9.6pt);
\draw[color=uququq] (2.58,-13.6) node {$20$};
\fill [color=uququq] (-12.78,-1.52) circle (9.6pt);
\draw[color=uququq] (-14.1,-1.8) node {$14$};
\end{tikzpicture}
\caption{Dual graph of the rational curves $R_1, \ldots, R_{20}$ in the Enriques surface of type VI in \cite{DK22}.}
\label{fig_dual_graph_K6}
\end{figure}

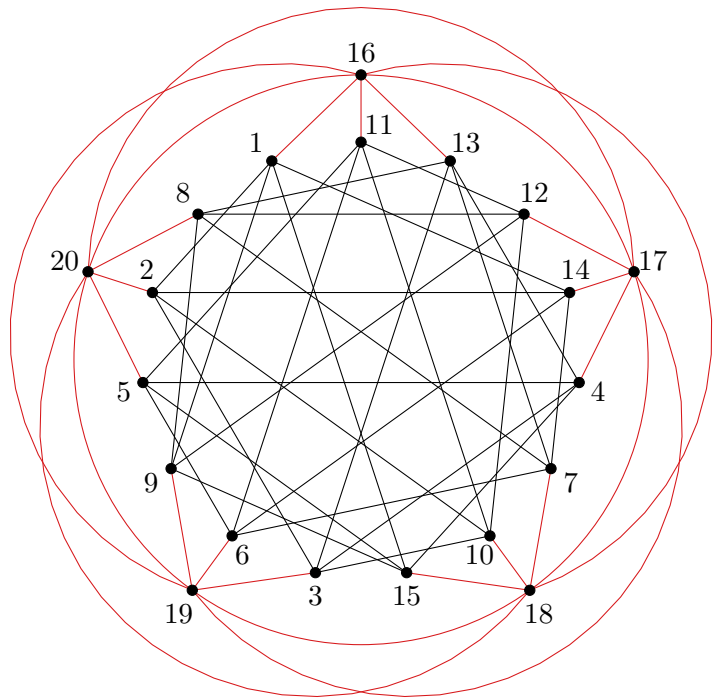
\begin{figure}
\begin{tikzpicture}[scale=0.6]
\clip(-8.128935855736028,-2.9324554936934453) rectangle (8.477993427868208,12.708954645515197);
\draw (-2.3,9.5) node {$1$};
\draw (-4.7,6.7) node {$2$};
\draw (-1,-0.5) node {$3$};
\draw (5.2,4) node {$4$};
\draw (-5.2,4) node {$5$};
\draw (-2.6,0.4) node {$6$};
\draw (4.6,2) node {$7$};
\draw (-3.9,8.4) node {$8$};
\draw (-4.6,2) node {$9$};
\draw (2.6,0.4) node {$10$};
\draw (0.4,9.9) node {$11$};
\draw (3.8,8.4) node {$12$};
\draw (2.3,9.5) node {$13$};
\draw (4.7,6.7) node {$14$};
\draw (1,-0.5) node {$15$};
\draw (0,11.5) node {$16$};
\draw (6.4,6.9) node {$17$};
\draw (3.9,-0.9) node {$18$};
\draw (-4,-0.9) node {$19$};
\draw (-6.5,6.9) node {$20$};
\draw [line width=0.4pt,color=ffqqqq] (0.,11.)-- (0.,9.514364454222578);
\draw [line width=0.4pt,color=ffqqqq] (0.,11.)-- (-1.9562952014676087,9.09854107258706);
\draw [line width=0.4pt,color=ffqqqq] (0.,11.)-- (1.9562952014676123,9.098541072587059);
\draw [line width=0.4pt,color=ffqqqq] (-5.987252556868822,6.650006391525967)-- (-3.5743291902175023,7.922970568002118);
\draw [line width=0.4pt,color=ffqqqq] (-5.987252556868822,6.650006391525967)-- (-4.574329190217502,6.190919760433243);
\draw [line width=0.4pt,color=ffqqqq] (-5.987252556868822,6.650006391525967)-- (-4.78338611675281,4.201875969696695);
\draw [line width=0.4pt,color=ffqqqq] (-3.70032557937465,-0.3884311178298277)-- (-4.165352128002917,2.29976293710639);
\draw [line width=0.4pt,color=ffqqqq] (-3.70032557937465,-0.3884311178298277)-- (-2.8270909152852033,0.813473286151603);
\draw [line width=0.4pt,color=ffqqqq] (-3.70032557937465,-0.3884311178298277)-- (-1.,0.);
\draw [line width=0.4pt,color=ffqqqq] (1.,0.)-- (3.7003255793746446,-0.38843111782983275);
\draw [line width=0.4pt,color=ffqqqq] (2.8270909152851993,0.8134732861515994)-- (3.7003255793746446,-0.38843111782983275);
\draw [line width=0.4pt,color=ffqqqq] (4.165352128002915,2.2997629371063866)-- (3.7003255793746446,-0.38843111782983275);
\draw [line width=0.4pt,color=ffqqqq] (5.987252556868827,6.6500063915259595)-- (4.78338611675281,4.201875969696692);
\draw [line width=0.4pt,color=ffqqqq] (5.987252556868827,6.6500063915259595)-- (4.574329190217504,6.190919760433237);
\draw [line width=0.4pt,color=ffqqqq] (5.987252556868827,6.6500063915259595)-- (3.574329190217505,7.922970568002114);
\draw [line width=0.4pt,color=ffqqqq] (0.,4.704630109478453) circle (6.295369890521547cm);
\draw [shift={(-1.7074982335406321,5.259429916838701)},line width=0.4pt,color=ffqqqq]  plot[domain=1.281686116022375:4.373180660439252,variable=\t]({1.*5.989131414256237*cos(\t r)+0.*5.989131414256237*sin(\t r)},{0.*5.989131414256237*cos(\t r)+1.*5.989131414256237*sin(\t r)});
\draw [shift={(-1.0552919440585171,3.252145356857434)},line width=0.4pt,color=ffqqqq]  plot[domain=2.5383231774582913:5.6298177218751695,variable=\t]({1.*5.989131414256237*cos(\t r)+0.*5.989131414256237*sin(\t r)},{0.*5.989131414256237*cos(\t r)+1.*5.989131414256237*sin(\t r)});
\draw [shift={(1.0552919440585176,3.252145356857432)},line width=0.4pt,color=ffqqqq]  plot[domain=-2.488225068285378:0.6032694761315007,variable=\t]({1.*5.989131414256239*cos(\t r)+0.*5.989131414256239*sin(\t r)},{0.*5.989131414256239*cos(\t r)+1.*5.989131414256239*sin(\t r)});
\draw [shift={(1.7074982335406343,5.259429916838698)},line width=0.4pt,color=ffqqqq]  plot[domain=-1.23158800684946:1.8599065375674189,variable=\t]({1.*5.989131414256236*cos(\t r)+0.*5.989131414256236*sin(\t r)},{0.*5.989131414256236*cos(\t r)+1.*5.989131414256236*sin(\t r)});
\draw [shift={(0.,6.5)},line width=0.4pt,color=ffqqqq]  plot[domain=0.02504905458645669:3.1165435990033354,variable=\t]({1.*5.98913141425624*cos(\t r)+0.*5.98913141425624*sin(\t r)},{0.*5.98913141425624*cos(\t r)+1.*5.98913141425624*sin(\t r)});
\draw [line width=0.4pt] (-1.9562952014676087,9.09854107258706)-- (-4.574329190217502,6.190919760433243);
\draw [line width=0.4pt] (-1.9562952014676087,9.09854107258706)-- (-4.165352128002917,2.29976293710639);
\draw [line width=0.4pt] (-1.9562952014676087,9.09854107258706)-- (1.,0.);
\draw [line width=0.4pt] (-1.9562952014676087,9.09854107258706)-- (4.574329190217504,6.190919760433237);
\draw [line width=0.4pt] (-4.574329190217502,6.190919760433243)-- (4.574329190217504,6.190919760433237);
\draw [line width=0.4pt] (-4.574329190217502,6.190919760433243)-- (2.8270909152851993,0.8134732861515994);
\draw [line width=0.4pt] (-4.574329190217502,6.190919760433243)-- (-1.,0.);
\draw [line width=0.4pt] (-1.,0.)-- (1.9562952014676123,9.098541072587059);
\draw [line width=0.4pt] (-1.,0.)-- (4.78338611675281,4.201875969696692);
\draw [line width=0.4pt] (-1.,0.)-- (2.8270909152851993,0.8134732861515994);
\draw [line width=0.4pt] (4.78338611675281,4.201875969696692)-- (1.,0.);
\draw [line width=0.4pt] (4.78338611675281,4.201875969696692)-- (-4.78338611675281,4.201875969696695);
\draw [line width=0.4pt] (4.78338611675281,4.201875969696692)-- (1.9562952014676123,9.098541072587059);
\draw [line width=0.4pt] (-4.78338611675281,4.201875969696695)-- (0.,9.514364454222578);
\draw [line width=0.4pt] (-4.78338611675281,4.201875969696695)-- (1.,0.);
\draw [line width=0.4pt] (-4.78338611675281,4.201875969696695)-- (-2.8270909152852033,0.813473286151603);
\draw [line width=0.4pt] (-2.8270909152852033,0.813473286151603)-- (4.165352128002915,2.2997629371063866);
\draw [line width=0.4pt] (-2.8270909152852033,0.813473286151603)-- (4.574329190217504,6.190919760433237);
\draw [line width=0.4pt] (-2.8270909152852033,0.813473286151603)-- (0.,9.514364454222578);
\draw [line width=0.4pt] (4.165352128002915,2.2997629371063866)-- (-3.5743291902175023,7.922970568002118);
\draw [line width=0.4pt] (4.165352128002915,2.2997629371063866)-- (1.9562952014676123,9.098541072587059);
\draw [line width=0.4pt] (4.165352128002915,2.2997629371063866)-- (4.574329190217504,6.190919760433237);
\draw [line width=0.4pt] (-3.5743291902175023,7.922970568002118)-- (-4.165352128002917,2.29976293710639);
\draw [line width=0.4pt] (-3.5743291902175023,7.922970568002118)-- (3.574329190217505,7.922970568002114);
\draw [line width=0.4pt] (-3.5743291902175023,7.922970568002118)-- (1.9562952014676123,9.098541072587059);
\draw [line width=0.4pt] (-4.165352128002917,2.29976293710639)-- (3.574329190217505,7.922970568002114);
\draw [line width=0.4pt] (-4.165352128002917,2.29976293710639)-- (1.,0.);
\draw [line width=0.4pt] (2.8270909152851993,0.8134732861515994)-- (0.,9.514364454222578);
\draw [line width=0.4pt] (2.8270909152851993,0.8134732861515994)-- (3.574329190217505,7.922970568002114);
\draw [line width=0.4pt] (0.,9.514364454222578)-- (3.574329190217505,7.922970568002114);
\draw [fill=black] (-1.,0.) circle (3.2pt);
\draw [fill=black] (1.,0.) circle (3.2pt);
\draw [fill=black] (2.8270909152851993,0.8134732861515994) circle (3.2pt);
\draw [fill=black] (4.165352128002915,2.2997629371063866) circle (3.2pt);
\draw [fill=black] (4.78338611675281,4.201875969696692) circle (3.2pt);
\draw [fill=black] (4.574329190217504,6.190919760433237) circle (3.2pt);
\draw [fill=black] (3.574329190217505,7.922970568002114) circle (3.2pt);
\draw [fill=black] (1.9562952014676123,9.098541072587059) circle (3.2pt);
\draw [fill=black] (0.,9.514364454222578) circle (3.2pt);
\draw [fill=black] (-1.9562952014676087,9.09854107258706) circle (3.2pt);
\draw [fill=black] (-3.5743291902175023,7.922970568002118) circle (3.2pt);
\draw [fill=black] (-4.574329190217502,6.190919760433243) circle (3.2pt);
\draw [fill=black] (-4.78338611675281,4.201875969696695) circle (3.2pt);
\draw [fill=black] (-4.165352128002917,2.29976293710639) circle (3.2pt);
\draw [fill=black] (-2.8270909152852033,0.813473286151603) circle (3.2pt);
\draw [fill=black] (0.,11.) circle (3.2pt);
\draw [fill=black] (-5.987252556868822,6.650006391525967) circle (3.2pt);
\draw [fill=black] (5.987252556868827,6.6500063915259595) circle (3.2pt);
\draw [fill=black] (3.7003255793746446,-0.38843111782983275) circle (3.2pt);
\draw [fill=black] (-3.70032557937465,-0.3884311178298277) circle (3.2pt);
\end{tikzpicture}
\caption{Dual graph of the rational curves $R_1, \ldots, R_{20}$ in the Enriques surface of type VII. The picture combines \cite[Figure~8.16]{DK22} and \cite[Figure~8.17]{DK22}.}
\label{fig_dual_graph_K7}
\end{figure}

\newpage

\

%-------------------------------------------

%BIBLIOGRAPHY

\end{document}